\theoremstyle{definition}
\newtheorem{theorem}{Theorem}
\newtheorem{lemma}{Lemma}[section]
\newtheorem{proposition}[lemma]{Proposition}
\newtheorem{rem}[lemma]{Remark}
\newtheorem{cor}[lemma]{Corollary}
\newtheorem{definition}[lemma]{Definition}
\newcommand\vep{\varepsilon}
\newcommand{\Nc}{\mathcal{N}}
\newcommand{\pp}{\mathbb{P}}
\newcommand{\EE}[1]{\mathbb{E} \left[ #1 \right]}
\renewcommand{\P}{\mathbb P}
\newcommand{\E}{\mathbb E}
\newcommand{\R}{\mathbb R}
\newcommand{\N}{\mathbb N}
\newcommand{\lc}{{L(\beta)}}
\newcommand{\lcn}{{L}}
\newcommand{\Dc}{[0,\lc]}
\newcommand{\Dcn}{{\Omega}}
\title{Power-law scaling of the effective population size in a branching particle system
for moderate mutation-selection}
\date{\today}
\begin{document}

\author{Florin Boenkost\thanks{Faculty of Mathematics, University of Vienna, Oskar-Morgenstern-Platz 1, 1090 Vienna, Austria} and Julie Tourniaire\thanks{Université Marie et Louis Pasteur, CNRS, LmB (UMR 6623), F-25000 Besançon, France}}

\maketitle
\abstract{
We consider a one-dimensional dyadic branching
Brownian motion on $\R$ with positive drift $\beta \in (0,1)$, branching rate $1/2$,
reflected at $0$ and
killed at a boundary $L > 0$. The killing boundary
$L$ is chosen so that the total population size remains approximately
constant, proportional to $N \in \mathbb{N}$. This branching process models a
population accumulating deleterious mutations.
In the large-$N$ limit, we prove that when the typical width of the particle cloud
is of order $c \log(N)$, with $c \in (0,1)$, the demographic fluctuations
follow a Yaglom law on a polynomial time scale. Moreover, the limiting
genealogy  of the system involves only binary mergers, concentrated near the
reflecting boundary. Our model is a version of the branching Brownian motion
with absorption introduced by Berestycki, Berestycki, and Schweinsberg
to study the effect of beneficial mutations on genealogies.
In sharp contrast with their model, whose genealogy is given by a Bolthausen–Sznitman
coalescent, we show that our system falls into the universality class of
Kingman’s coalescent.

\section{Introduction}

Under neutral theory, genetic diversity is expected to increase with the census population size $N$. Empirically, however, genetic diversity typically scales with $N$ according to a power law~\cite{buffalo}. This discrepancy is known as Lewontin's paradox~\cite{Lewontin1974}. Several explanations have been proposed to account for this phenomenon; we refer to~\cite{buffalo} for a detailed review.
Negative selection, that is, selection against new mutations, is one such explanation.
It is ubiquitous in nature and known to reduce genetic diversity, thereby motivating
extensive research (see, e.g.,~\cite{Walczak2012,cvijovic2018effect,etheridge_2011}).
Despite these efforts, the mechanisms underlying this reduction in genetic diversity remain only partially understood.

In this work, we introduce a branching particle system as a tractable model for negative selection in a biologically relevant parameter regime. Within this framework, we find that the evolutionary dynamics of the system closely resemble those of a neutral population with a polynomially reduced \textit{effective population size}, in agreement with predictions by Charlesworth, Morgan, and Charlesworth~\cite{Charlesworth93}.
Interestingly enough, the exponent in the power law can take any value, provided that the strength of selection is adjusted accordingly.
As a byproduct, our approach also provides a phenomenological perspective on how negative selection shapes population genealogies.

In this introductory section, we define our model and the moderate mutation-selection
regime (Section~\ref{sec:model}).
In Section~\ref{sec:motivation}, we provide a biological interpretation
of this model as a system of particles accumulating deleterious mutations.
Our main results concerning the demographic fluctuations and genealogical
structure of the system are presented in Section~\ref{sec:results}.

\subsection{The model}\label{sec:model}
\label{sec:BBM}
Let  $\beta\in(0,1)$ and define
\begin{equation}
	L(\beta):=\frac{1}{\sqrt{1-\beta^2}}\left(\arctan\left(-\frac{\sqrt{1-\beta^2}}{\beta}\right)+\pi\right).
	\label{eq:def_L}
\end{equation}
Consider a dyadic branching Brownian motion $(\mathbf{X}^\beta_t)_{t>0}$
(BBM) with drift $\beta$, branching rate $1/2$,
reflection at $0$, and killing at $L(\beta)$.

Let $\mathcal N_t^\beta$ denote the set of particles in the system at time $t$ and for all $v \in \mathcal N^\beta_t$,  let $x^\beta_v=x^\beta_v(t) \in [0,\lc)$ denote the position of particle $v$. Let $Z^\beta(t):=|{\mathcal N}^\beta_t|$ be the number of particles in the system at time $t$.
We write $\mathbb{P}_x$ for the law of the BBM started from a single particle at $x\in[0,\lc)$ and $\mathbb{E}_x$ for the corresponding expectation. The natural filtration generated by the BBM is denoted by $(\mathcal{F}^\beta_t,t\geq 0)$.

The killing boundary $\lc$ is chosen so that the number of particles in the
BBM remains approximately constant, making the system critical in a certain sense.
In the next paragraph, we explain why \eqref{eq:def_L} is the relevant quantity to consider.

\paragraph{Criticality.} It is a standard result (see e.g.~\cite[p.188]{Lawler:2018vn}) that the expected number of particles in the BBM $\mathbf{X}^\beta$ is governed by the PDE
\begin{equation}\label{PDE:A}\tag{A}
	\begin{cases}
		\partial_tu(t,y)=\frac{1}{2}\partial_{yy}u(t,y)-\beta\partial_y u(t,y)+\frac{1}{2}u(t,y) \\
		\beta u(t,0)-\frac{1}{2}\partial_y u(t,y)|_{y=0}=0 & \text{(flux at $0$),}               \\
		u(t,\lc)=0                                         & \text{(killing at $\lc$)}.
	\end{cases}
\end{equation}
Let $p^\beta_t(x,y)$ denote the fundamental solution of \eqref{PDE:A}. Then
$p^\beta_t$ represents the \emph{density} of particles in the BBM in the following sense.
\begin{lemma}[Many-to-one lemma]\label{lem:many-to-one0} For every measurable function $f: \R \mapsto \R$, for all $x \in [0,\lc]$ and $t \geq 0$, we have
	\begin{align}
		\E_x \left[ \sum_{v \in \Nc_t^\beta} f(x_v)\right] = \int_{0}^{L(\beta)} f(y) p^\beta_t(x,y)dy.
	\end{align}
\end{lemma}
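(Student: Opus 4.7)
The plan is to deduce the many-to-one lemma from the classical spinal/single-particle representation: replace the sum over the branching tree by an expectation along a single diffusion path, absorbing the branching contribution into a multiplicative $e^{t/2}$ factor, then identify the resulting expectation with an integral against the fundamental solution of \eqref{PDE:A}.

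First I would handle the branching structure. Define $u(t,x):=\E_x\bigl[\sum_{v\in\Nc_t^\beta}f(X_v^\beta(t))\bigr]$ and condition on the first branching time $\tau$, which is $\mathrm{Exp}(1/2)$ independent of the motion. On $\{\tau>t\}$ the sum reduces to $f(Y_t^x)\mathbf{1}_{t<\sigma_{\lc}}$, where $Y$ is the reflected-killed drifted Brownian motion with generator $\mathcal{G}$; on $\{\tau\le t\}$ the Markov property and the branching property yield two independent copies of the system started at $Y_\tau$. A standard induction on the number of branching events up to time $t$ (or a Gronwall-type argument directly from the renewal equation) then gives
\[
u(t,x)=e^{t/2}\,\E_x\!\left[f(Y_t)\mathbf{1}_{t<\sigma_{\lc}}\right].
\]
Equivalently, since the expected population size is $e^{t/2}$, one can decompose the sum as an expectation with respect to a uniformly chosen tagged particle whose motion under the size-biased law is exactly $Y$.

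Next I would identify the right-hand side with $\int_0^{\lc} f(y)p_t^\beta(x,y)\,dy$. Write $q_t(x,y)$ for the sub-Markovian transition density of $Y$, which by standard theory is the fundamental solution (in $y$) of the forward Kolmogorov equation $\partial_t q=\tfrac12\partial_{yy}q-\beta\partial_y q$ with zero-flux condition $\beta q-\tfrac12\partial_y q=0$ at $0$ (reflection, expressing conservation of probability current) and Dirichlet condition $q(t,\lc)=0$ (absorption). Setting $p_t^\beta(x,y):=e^{t/2}q_t(x,y)$, a direct differentiation shows
\[
\partial_t p_t^\beta=\tfrac12 p_t^\beta+e^{t/2}\bigl(\tfrac12\partial_{yy}q-\beta\partial_y q\bigr)=\tfrac12\partial_{yy}p_t^\beta-\beta\partial_y p_t^\beta+\tfrac12 p_t^\beta,
\]
and the boundary conditions are preserved under multiplication by $e^{t/2}$, so $p_t^\beta$ is indeed the fundamental solution of \eqref{PDE:A}.

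The only mildly delicate step is the justification of the boundary conditions for the reflected, drifted diffusion killed at $\lc$, in particular that the Feller boundary at $0$ produces exactly the mixed flux condition stated in \eqref{PDE:A} rather than a Neumann condition (the drift $\beta$ contributes). This is standard — it follows from the Skorokhod representation of reflected Brownian motion with drift together with Itô's formula applied to test functions, or from the Hille-Yosida description of the generator domain — but it is the place where one must be careful. Uniqueness of the fundamental solution of \eqref{PDE:A}, and hence the identification of $p_t^\beta$, then follows from a routine energy estimate or maximum principle on $[0,\lc]$.
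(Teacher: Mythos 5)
Your argument is correct and is the standard proof of the many-to-one lemma; the paper itself gives no proof, simply citing the result as well known (to Lawler's book), so there is nothing to compare against beyond noting that your two steps — the renewal equation at the first branching time yielding the factor $e^{t/2}$, and the identification of $e^{t/2}q_t(x,y)$ with the fundamental solution of \eqref{PDE:A} including the mixed flux condition $\beta q-\tfrac12\partial_yq=0$ at $0$ — are exactly the right ones. The only caveat, shared with the paper's statement, is that ``every measurable $f$'' should be read as bounded or nonnegative $f$ so that both sides are well defined and the Gronwall/uniqueness step for the renewal equation applies.
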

\noindent In particular, note that, if the system starts with a single particle at $x$ at time $0$,   the expected number of particles in any Borel set $B$ at time $t$ is given by $\int_B p^\beta_t(x,y)dy$. Define
\begin{equation}\label{def:q}
	g^\beta_t(x,y): =e^{\beta(x-y)}e^{\frac{\beta^2-1}{2}t}p^\beta_t(x,y).
\end{equation}
A straightforward computation shows that $g^\beta_t$ is the fundamental solution of the self-adjoint PDE
\begin{equation}\label{PDE:B}\tag{B}
	\begin{cases}
		\partial_t u(t,y)=\frac{1}{2}\partial_{yy}u(t,y) &                            \\
		\beta u(t,0)-\partial_y u(t,y)_{|y=0}=0          & \text{(flux at $0$)}       \\
		u(t,\lc)=0                                       & \text{(killing at $\lc$)}.
	\end{cases}
\end{equation}
For $\beta \in(0,1)$ and $L>0$, consider the Sturm--Liouville problem
\begin{equation}\tag{SLP}
	\frac{1}{2}v''(x)=\lambda v(x), \quad x\in(0,L), \quad v'(0)=\beta v(0), \quad v(L)=0.
	\label{SLP}
\end{equation}
It is known \cite[Chapter 4]{zettl10} that the eigenvalues $\lambda_i\equiv \lambda_{i}^{\beta,L}$ of \eqref{SLP}
are simple and that the associated eigenvectors $(v_i)_{i\geq 1}\equiv (v_{i}^{\beta, L})_{i\geq 1}$  can be normalised to form an orthonormal basis of $\mathrm{L}^2([0,L])$. A direct calculation shows that the $i$-th eigenvalue of \eqref{SLP} is the unique solution of the algebraic equation
\begin{equation} \label{eq:caract_eigenvalue}
	\tan\left(\sqrt{-2\lambda_i}L\right)=-\frac{\sqrt{-2\lambda_i}}{\beta},
	\quad \sqrt{-2\lambda_i}L\in\left[\left(i-\tfrac{1}{2}\right)\pi,i\pi\right].
\end{equation}
For sufficiently large $t$, we expect the density of particles in the dyadic BBM with drift $\beta$, branching at rate $1/2$, reflected at $0$ and killed at $L$, to be well-approximated by the first term of its spectral decomposition, that is
\begin{equation}\label{approx:p}
	t\gg 1, \quad
	p_t(x,y) \approx e^{\beta(y-x)}e^{\frac{1-\beta^2}{2}t} e^{\lambda_1 t} \frac{v_{1}(x)v_1(y)}{||v_1||^2}.
\end{equation}
One can now choose $L$ in such a way that the expected number of particles neither increases nor decreases exponentially: for this choice of $L$, we say that the BBM is critical. This motivates our definition for $\lc$.
Indeed, for  $L=\lc$, one can show (see \eqref{eq:caract_eigenvalue})
that $\lambda_1=-\frac{1-\beta^2}{2}$ so that $\mathbf{X}^\beta$ is critical
and  check that
\begin{equation}
	\label{eq:def_gamma}
	v_{1}^\beta (x)\equiv v_{1}^{\beta,\lc}(x)=\sin(\gamma(\beta)(\lc-x)), \quad \text{with} \quad \gamma(\beta):=\sqrt{1-\beta^2}
\end{equation}
is a positive solution of  \eqref{SLP} for $\lambda=-\frac{1-\beta^2}{2}$.
\begin{rem}\label{rem:bij_L}
	The function $[0,1)\to[\pi/2,\infty), \; \beta\mapsto \lc$ is increasing, $L_0=\pi/2$ and $\lc\to\infty$ as $\beta\to 1$. Moreover,
	\begin{equation}
		\label{eq:equiv_L}
		\gamma(\beta) \lc= \pi-\gamma(\beta)+o(\gamma(\beta)), \quad \beta\to 1.
	\end{equation}
\end{rem}

\paragraph{Perron-Frobenius type decomposition.} We define
\begin{equation}\label{def:h}
	\tilde{h}^\beta(x)=\tilde{c}(\beta) e^{\beta x} v_{1}^{\beta}(x), \quad \text{and} \quad h^\beta(x)=\frac{1}{\tilde{c}(\beta) }\frac{2}{\lc+\beta}e^{-\beta x}v_{1}^{\beta}(x), \quad x\in\Dc,
\end{equation}
with
\begin{equation*}
	\tilde{c}(\beta) :=\left(\int_0^{\lc} \tilde h^\beta(x)dx\right)^{-1}=\frac{1}{\gamma(\beta)}e^{-\beta \lc}.
\end{equation*}
The function ${h}^\beta$ (resp.~$\tilde{h}^\beta$) is a right (resp.~left) Perron-Frobenius
eigenvector of the differential operator $\frac{1}{2}\partial_{xx}+\beta\partial_x+\frac{1}{2}$  with the same boundary conditions as in \eqref{PDE:A} and $\tilde{c}(\beta)$ can be thought of as a Perron-Frobenius renormalisation constant. Indeed, we have
\begin{equation}\label{eq:normalisation}
	\int_0^{\lc}\tilde h^\beta (x)dx=1, \quad \text{and} \quad \int_0^\lc h^\beta(x)\tilde{h}^\beta(x)dx=1.
\end{equation}
With this notation, \eqref{approx:p} shows that the density $p^\beta_t(x,y)$ is
approximately given by $h^\beta(x)\tilde{h}^\beta(y)$.
Furthermore,  \cref{lem:many-to-one0} and \eqref{eq:normalisation} imply  that,
for sufficiently large $t$,
\begin{equation}
	\label{eq:reproductive_value}
	\E_x[Z^\beta(t)]\approx h^\beta(x),
\end{equation} }
so that $h^\beta(x)$ can be thought of as the \textit{reproductive value} of a particle located at $x$.
On the other hand, the probability distribution $\tilde{h}^\beta(y)dy$
may be interpreted as the \emph{stable configuration} of the system: \cref{lem:many-to-one0} and \eqref{eq:normalisation}  show that, when starting from $N$ particles distributed according
to $\tilde h$, the expected configuration of the system remains essentially
the same for all sufficiently large $t$.

{ Next,
following~\cite{tourniaire2023tree}, we define
\begin{equation}
	\label{eq:def_Sigma}
	(\Sigma^\beta)^2=\int_0^Lh^\beta(x)^2\tilde h^\beta(x) dx.
\end{equation}
From the interpretation of $h$ and $\tilde h$, the quantity $(\Sigma^\beta)^2$ can be
viewed as a measure of the reproductive variance in the system, taking its structure into account.

\paragraph{The moderate mutation-selection regime.}
\label{sec:weakselection_BBM}

We are interested in the limiting behaviour of a sequence of critical branching
Brownian motions. More precisely, we set $c\in(0,1)$ and consider a sequence of drifts $(\beta_N)_{N\geq N_0}$ such that
\begin{equation}\label{eq:def_beta}
	\forall N\geq N_0, \quad  L(\beta_N)=c\log(N)+6\log \log(N),
\end{equation}
where $N_0:=\min\{n\geq 2:c\log(n)+\log\log(n)>\pi/2\}$. For all $N\geq N_0$, we write $(\mathbf{X}^{\beta_N}(t))\equiv (\mathbf{X}^{N}(t))$ for the corresponding critical BBM with drift $\beta_N$.
For notational convenience, we simplify the notation for quantities related to the BBM $\mathbf{X}^{N}$
by writing
\begin{equation}
	L(\beta_N)\equiv L,\quad \gamma(\beta_N)=\gamma,  \quad Z^{\beta_N}(t)=Z(t),\quad  h^{\beta_N}\equiv h,  \quad \tilde{h}^{\beta_N}\equiv \tilde{h},
	\quad \Sigma^{\beta_N}\equiv \Sigma,
\end{equation}
so that, throughout the paper, these quantities are understood to depend implicitly on $N$.
The choice of parameter \eqref{eq:def_beta}, which we refer to as the
weak-selection regime in the BBM with absorption and reflection,
is motivated by the following two observations.
\begin{proposition}\label{prop:concentration} 	Let $(\beta_N)$ be as in \eqref{eq:def_beta}.
	There exists a positive constant $\sigma$ that only  depends on $c$, such that
	\begin{equation}
		\label{eq:equiv_variance}
		\lim_{N\to\infty}\frac{\Sigma^2}{N^c}=\sigma^2.
	\end{equation}
\end{proposition}
The second observation is that the main contributions to
the integral~\eqref{eq:def_Sigma} are concentrated near the
reflective boundary.
\begin{proposition}\label{prop:concentration2}
	Let $(\beta_N)$ be as in \eqref{eq:def_beta}. For all $1\ll z\ll L$,
	\[
		\lim_{N\to\infty}\frac{1}{N^c}\int_0^z h(x)^2\tilde h(x)dx=\sigma^2,
	\]
	Fix ${A}=6\log\log N-\log\log\log(N)$. Then
	\[
		\lim_{N\to\infty} N^c\int_0^{{A}} \tilde{h}(x)dx =a,
	\]
	for some constant $a>0$ (that only depends on $c$).
\end{proposition}

Proposition~\ref{prop:concentration} and Proposition~\ref{prop:concentration2} suggest that the demographic fluctuations of the BBM should coincide with those of a
subsystem consisting of particles that remain in the interval $[0,A]$.
The second part of the proposition shows that the size of this subsystem can be
\textit{reduced} to order $N^{1-c}\ll N$.
The main goal of the present article is to establish that, for such a structured population,
in the large-$N$ limit, the evolutionary dynamics resemble those of a single-type critical
Galton--Watson process with finite reproductive variance and population size of order
$N^{1-c}$.

We postpone the proof of Proposition~\ref{prop:concentration} and Proposition~\ref{prop:concentration2}
to Appendix~\ref{sec:proof_variance}.

\subsection{Main results} \label{sec:results}

In this paper, we study the limiting genealogy and demographic fluctuations of
the BBM in the large-$N$ limit, when the process starts
with a single particle at a fixed location $x > 0$.
As a first step, we derive precise asymptotics for the survival probability of the sequence of BBMs $(\mathbf{X}^N, \; N\geq N_0)$.
\begin{theorem}[Kolmogorov estimate]\label{th:Kolmogorov}
	Let $t>0$ and $x>0$. We have
	$$
		\left|{N}\mathbb{P}_x\left( Z(tN^{1-c})>0 \right) - \frac{2}{\sigma^2 t} h(x)\right|\to 0, \quad \text{as } N \to \infty,
	$$
	where $\sigma^2$ is as in Proposition~\ref{prop:concentration}.
\end{theorem}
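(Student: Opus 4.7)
The plan is to realise the survival probability as the solution of a semilinear PDE and reduce its large-time asymptotics to a scalar ODE along the Perron–Frobenius direction $h^N$. Setting $\psi_N(t,x):=\pp_x(Z^N(t)>0)$ and conditioning on the first branching event shows that $\psi_N$ solves the Fisher–KPP–type equation
\begin{equation*}
\partial_t \psi_N = \mathcal{G}\psi_N + \tfrac12\,\psi_N(1-\psi_N),\qquad \psi_N'(t,0)=0,\ \psi_N(t,\lcn)=0,\ \psi_N(0,\cdot)=\indicator_{[0,\lcn)}.
\end{equation*}
Because $\lcn$ is tuned so that $0$ is the principal eigenvalue of $\mathcal{G}+\tfrac12$ with right eigenvector $h^N$, the linearisation around $\psi_N=0$ is neutral along $h^N$, and the decay of $\psi_N$ must be produced by the quadratic term $-\tfrac12 \psi_N^2$.

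Next, I would test the PDE against the left eigenvector $\tilde h^N$ (which satisfies $\mathcal{G}^\ast \tilde h^N=-\tfrac12 \tilde h^N$). A direct integration by parts, using $h^N(\lcn)=\tilde h^N(\lcn)=0$, $\psi_N(t,\lcn)=0$, $\psi_N'(t,0)=0$ together with the identity $\tfrac12(\tilde h^N)'(0)=\beta\,\tilde h^N(0)$, makes the boundary contributions cancel and yields the exact identity
\begin{equation*}
\dot q_N(t)=-\tfrac12 \int_0^{\lcn} \psi_N(t,y)^2 \tilde h^N(y)\,dy,\qquad q_N(t):=\int_0^{\lcn} \psi_N(t,y) \tilde h^N(y)\,dy.
\end{equation*}
If one can establish the alignment $\psi_N(t,y)\simeq h^N(y)\,q_N(t)$ for $t$ large, then the right-hand side becomes $-\tfrac12 \Sigma_N(\lcn)^2 q_N^2$ and the closed ODE $\dot q_N=-\tfrac12 \Sigma_N^2 q_N^2$ with $q_N(0)=1$ integrates to $q_N(t)\sim 2/(\Sigma_N^2 t)$. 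Combining this with Proposition~\ref{prop:concentration} ($\Sigma_N(\lcn)^2\sim\sigma^2 N^c$) and setting $t\mapsto tN^{1-c}$ gives $q_N(tN^{1-c})\sim 2/(\sigma^2 t N)$, and the alignment then delivers $N\psi_N(tN^{1-c},x)\to 2h^N(x)/(\sigma^2 t)$.

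The main obstacle is to justify the alignment with uniform-in-$x$ error control. The spectral gap of $\mathcal{G}+\tfrac12$ on $[0,\lcn]$ satisfies $|\lambda_2-\lambda_1|\asymp\gn^2\asymp(\log N)^{-2}$, so the linearised semigroup damps non-principal modes on a time scale of order $(\log N)^2$, much shorter than $T=tN^{1-c}$. I would therefore split the evolution into a transient window $[0,t_0]$, with $(\log N)^{2+\delta}\leq t_0\ll N^{1-c}$, on which spectral-expansion estimates for the linear semigroup $e^{t(\mathcal{G}+1/2)}$ control the non-principal components of $\psi_N$ (the nonlinear term being comparatively small on this window), and a macroscopic window $[t_0,T]$, on which the ODE above governs the evolution and preserves the alignment. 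Uniformity in $x$, including the regime $x$ close to $\lcn$ where both sides vanish at the rate $h^N(x)\to 0$, would follow from pointwise control of the semigroup that is uniform in the starting point. As a sanity check and a complementary route yielding a matching lower bound, the nonnegative martingale $W_t:=\sum_{v\in\mathcal{N}_t^N} h^N(X_v^N(t))$ has $\ee_x[W_T]=h^N(x)$ and, by a many-to-two computation, $\ee_x[W_T^2]\sim \sigma^2 t\,h^N(x)\,N$, so Cauchy–Schwarz gives $\pp_x(Z^N(T)>0)\gtrsim h^N(x)/(\sigma^2 t N)$; the sharp factor~$2$ in the Kolmogorov constant is captured by the PDE analysis above.
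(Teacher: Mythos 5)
Your overall strategy coincides with the paper's: the exact identity $\dot q_N(t)=-\tfrac12\int_0^{L_N}\psi_N(t,y)^2\tilde h^N(y)\,dy$ is precisely Lemma~\ref{lem:ODE}, the alignment $\psi_N(t,\cdot)\approx q_N(t)\,h^N(\cdot)$ is Lemma~\ref{lem:approximation f(x,t)}, the ODE integration is the proof of Proposition~\ref{prop:kolmogorov}, and your Cauchy--Schwarz/martingale lower bound is Lemma~\ref{lem:A priori bound} (phrased there as a change of measure plus Jensen, with the second moment supplied by Lemma~\ref{lem:rb_mart}).

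The one place where you diverge, and where your sketch has a genuine gap, is the justification of the alignment. You propose to damp the non-principal spectral components of $\psi_N$ on a transient window $[0,t_0]$, ``the nonlinear term being comparatively small on this window''. This is false near $t=0$: $\psi_N(0,\cdot)\equiv 1$ on $[0,L_N)$, so the quadratic term is of the same order as the solution throughout the initial decay, and a Duhamel expansion around the linear semigroup does not close there. The paper avoids linearising on $[0,t_0]$ altogether: it applies the branching property at time $t_0=\log(N)^3$ and a union bound to get $f_N(t,x)\le\int p_{t_0}(x,y)f_N(t-t_0,y)\,dy$, so the heat-kernel mixing (Lemma~\ref{lem:hK}) acts on the already-decayed profile $f_N(t-t_0,\cdot)$, yielding the upper alignment (Lemma~\ref{lem:union_bound}); the lower alignment then comes from a Bonferroni inequality whose error term is of order $N^c(\log N)^3\,h^N(x)\,a_N(t)^2$. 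Absorbing that error requires the a priori bound $N^c(\log N)^3 a_N(t)\to 0$ uniformly for $t\ge\log(N)^4$ (Lemma~\ref{eq:lem_ub_a}); this does not follow from the Jensen bound $a_N(t)\le 2/t$, which is not small at $t\asymp\log(N)^4$ and fails even at $t\asymp N^{1-c}$ once $c\ge 1/2$. The paper proves it by splitting the integral defining $a_N$ at $A=\log\log N$ and using that $\int_0^A\tilde h^N(x)\,dx\lesssim \log\log N\cdot N^{-c}(\log N)^{-5}$. Your proposal contains no analogue of this step, and without it the second-moment error in the lower alignment cannot be closed, so the sharp constant $2/(\sigma^2 t)$ is not reached.
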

This theorem suggests that the typical time scale of evolution of  $Z^N$ is of order $N^{1-c}$. The following Yaglom law describes the scaling limit of $Z^N$ on this same time scale.
\begin{theorem}[Yaglom law]\label{th:Yaglom}
	Let $t>0$ and $x>0$. Assume that, for all $N$ large enough,  $\mathbf{X}^N_0$ consists of a single
	particle at $x$. Conditional on the event $\{Z^N(tN^{1-c}) > 0\},$ we have
	\begin{equation*}
		\frac{1}{N}{Z(tN^{1-c})}
		\to
		\frac{\sigma^2 t}{2} \ \mathcal{E}, \quad \text{as } N \to \infty,
	\end{equation*}
	in distribution, where $\mathcal{E}$ is an exponential random variable of mean $1$.
\end{theorem}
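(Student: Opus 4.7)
The plan is to use the method of moments (a keyword of the paper). Since the exponential distribution is determined by its moments (Carleman's condition), convergence in distribution reduces to showing that, for every integer $k\geq 1$,
\begin{equation}\label{pr:asymp_mom}
	M_k^N(tN^{1-c},x)\sim k!\left(\frac{\sigma^2 t}{2}\right)^{k-1}h^N(x)\,N^{k-1},\qquad N\to\infty,
\end{equation}
where $M_k^N(t,x):=\mathbb{E}_x\bigl[Z^N(t)(Z^N(t)-1)\cdots(Z^N(t)-k+1)\bigr]$ is the $k$-th factorial moment. Indeed, combining \eqref{pr:asymp_mom} with Theorem~\ref{th:Kolmogorov} yields $\mathbb{E}_x[(Z^N(tN^{1-c})/N)^k\mid Z^N>0]\to k!(\sigma^2 t/2)^k$, which are the moments of $(\sigma^2 t/2)\mathcal{E}$.

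The first step is a many-to-few identity extending Lemma~\ref{lem:many-to-one0}: conditioning on the MRCA of the $k$ sampled particles for a binary branching process at rate $1/2$ yields
\begin{equation*}
	M_k^N(t,x)=\frac{1}{2}\int_0^t\!\!\int_0^{\lcn} p_s^N(x,y)\sum_{j=1}^{k-1}\binom{k}{j}M_j^N(t-s,y)\,M_{k-j}^N(t-s,y)\,dy\,ds,
\end{equation*}
with $M_1^N(t,x)=\int p_t^N(x,y)\,dy$; the density $p^N_s(x,y)$ accounts for the position of the MRCA, the factor $1/2$ is the branching rate, and $\binom{k}{j}$ distributes the $k$ ordered labels across the two children subtrees. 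I would then prove \eqref{pr:asymp_mom} by induction on $k$. The base case is the spectral fact $M_1^N(t,x)\to h^N(x)$ uniformly in $x$, valid once $t$ exceeds a burn-in time $s_N$; since the spectral gap of $\mathcal{G}^N+\tfrac12$ is at least of order $\log^{-2}N$ by \eqref{eq:caract_eigenvalue}, the choice $s_N=\log^3 N$ suffices. For the inductive step, I would substitute the hypothesis into the recursion, replace $p^N_s(x,y)$ by $h^N(x)\tilde{h}^N(y)$, and rescale $s=\tau N^{1-c}$. Combining the identity $\sum_{j=1}^{k-1}\binom{k}{j}j!(k-j)!=k!(k-1)$ with $\int_0^t(t-\tau)^{k-2}d\tau=t^{k-1}/(k-1)$ and Proposition~\ref{prop:concentration}, the leading contribution reads
\begin{equation*}
	\tfrac12\,h^N(x)\,\Sigma_N^2\,N^{1-c}\,N^{k-2}\,k!\bigl(\tfrac{\sigma^2}{2}\bigr)^{k-2}t^{k-1}\ \sim\ k!\Bigl(\tfrac{\sigma^2 t}{2}\Bigr)^{k-1}h^N(x)\,N^{k-1},
\end{equation*}
as claimed.

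The main obstacle will be propagating uniform error bounds through the induction without amplifying the relative error with $k$. The spectral approximations $p^N_s(x,y)\approx h^N(x)\tilde{h}^N(y)$ and $M_j^N(t-s,y)\approx$ (induction asymptotic) degrade near the boundaries $s\in[0,s_N]$ and $s\in[tN^{1-c}-s_N,tN^{1-c}]$. On those windows a crude a~priori bound $M_k^N(u,y)\lesssim u^{k-1}$, established by induction on $k$ from the recursion, shows that their contribution is of order $s_N\,N^{(1-c)(k-2)}=o(N^{k-1})$, since $c<1$. In the bulk, the spectral-gap estimate bounds $|p_s^N(x,y)-h^N(x)\tilde{h}^N(y)|$ by $e^{-s/\log^2 N}h^N(x)\tilde{h}^N(y)$, which is $o(N^{-1})$ for $s\geq s_N$. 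An alternative, potentially more streamlined, route would be to analyse the semilinear PDE $\partial_tv^N=\mathcal{G}^N v^N+\tfrac12 v^N-\tfrac12(v^N)^2$ satisfied by $v^N(t,x):=1-\mathbb{E}_x[e^{-\theta Z^N(t)/N}]$ directly, project the rescaling $Nv^N(\tau N^{1-c},\cdot)$ onto $h^N$ using the left eigenvector $\tilde{h}^N$, and solve the resulting asymptotic Riccati equation $\dot a=-\tfrac{\sigma^2}{2}a^2$ to recover the Laplace transform $(1+\theta\sigma^2 t/2)^{-1}$ of $(\sigma^2 t/2)\mathcal{E}$.
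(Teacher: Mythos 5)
Your proposal follows essentially the same route as the paper: the factorial-moment recursion you write down is exactly the paper's planar many-to-few identity (Proposition~\ref{prop:planar_moments} together with Proposition~\ref{eq:rec_L}) specialised to $\phi\equiv 1$, and the paper likewise deduces the Yaglom law from the limits $k!(\sigma^2 t/2)^{k-1}$ of these moments via the Kolmogorov estimate and moment-determinacy (the paper packages this as Gromov-weak convergence of $\bar M^N_t$ to the CPP, whose total mass is exponential with mean $\sigma^2t/2$). One quantitative slip in your error-control paragraph: the a priori bound $M_k^N(u,y)\lesssim u^{k-1}$ is false — already $M_1^N(u,y)\approx h^N(y)$, which is of order $N^c\log(N)^3$ near $y=0$ — and the correct crude bound carries factors $h^N(y)N^{(k-1)c}$ (compare Lemma~\ref{lem:rb} and Lemma~\ref{lem:short_time_variance}); with those factors the boundary-window contributions are still $O(h^N(x)N^{(k-1)c}\log(N)^{3(k-1)})=o(N^{k-1})$, so your conclusion stands after this correction.
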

We now turn to the description of the genealogy of the system.  For two particles
$v_1,v_2\in\mathcal{N}^N_t$, we denote by $d_t^N(v_1,v_2)$ the time to the most recent
common ancestor (MRCA) of $v_1$ and $v_2$.
Intuitively, our next result shows that the rescaled distance matrix of $k$
individuals sampled in the BBM converges to the distance matrix of $k$ individuals
sampled from a critical Galton-Watson process \cite{popovic2004}.
This limiting object is defined as follows. Let $t>0$ and $U$ be a random
variable on $[0,t]$.  Define $U^\theta$ such that
\begin{align} \label{eq:definition_H_theta}
	\forall s \le t, \qquad	\P( U^{\theta} \le s )
	:= \frac{(1+\theta) \P(U \le s)}{1+\theta \P(U \le s)}
	= \frac{(1+\theta) (s/t)}{1+\theta (s/t)}.
\end{align}
Let $(U^{\theta}_{i}; i\in\{1,...,k\})$ be $k$ i.i.d. copies of $U^\theta$ and set
$$
	\forall 1\leq i< j\leq k, \quad U^{\theta}_{i,j} = U^{\theta}_{j,i} :=  \max\{U^\theta_{l}: l\in\{i,\cdots,j-1\}\}.
$$
Define the random distance matrix $(H_{i,j}):= (H_{i,j}; i\neq j \in[k])$ such that for every  bounded and continuous function $\phi :\R^{k^2}\to \R$,
\begin{align} \label{eq:moments-CPP}
	\E\big[ \phi\big( (H_{i,j}) \big) \big]
	= k\int_0^\infty \frac{1}{(1+\theta)^2}
	\Big(\frac{\theta}{1+\theta}\Big)^{k-1}
	\E\big[\phi\big( (U^{\theta}_{i,j}) \big)\big]
	d\theta.
\end{align}
\begin{theorem}[Genealogy]\label{th:genealogy}
	Let $t>0$ and $x\in\mathbb{R}$. Assume that, for all $N$ large enough,  $\mathbf{X}^N_0$ consists of a single
	particle at $x$ and condition the BBM $\mathbf{X}^N$ on the event $\{Z^N(tN^{1-c})>0\}$. Sample $k$ particles $(v_1,...,v_k)$ in $\mathbf{X}^N$ at time $tN^{1-c}$. The rescaled distance matrix $\left(\frac{1}{N^{1-c}}d^N_{tN^{1-c}}(v_i,v_j)_{i,j}\right)$ converges to $(H_{\sigma(i),\sigma(j)})$ in distribution, where $H$ is as in \eqref{eq:moments-CPP} and $\sigma$ is an independent permutation of $\{1,...,k\}$.
\end{theorem}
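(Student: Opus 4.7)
The plan is to establish the joint convergence of the MRCA times via the method of moments, identifying the limiting law as Popović's coalescent point process, which is exactly the random comb distance matrix defined by \eqref{eq:definition_H_theta}--\eqref{eq:moments-CPP}.

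\textbf{Step 1 (reduction to a $k$-spine computation).} I would write the conditional expectation of any bounded continuous test function $\phi$ of the rescaled distance matrix as a ratio whose denominator is $\P_x(Z^N(tN^{1-c})>0)$, controlled by Theorem~\ref{th:Kolmogorov}, and whose numerator is an unconditioned $k$-point correlation sum normalised by a suitable power of $Z^N$ (handled through the Yaglom law, Theorem~\ref{th:Yaglom}). For any $k$-tuple of distinct particles at time $T:=tN^{1-c}$, the MRCA pattern is encoded by a binary tree topology $\tau$ on $k$ labelled leaves together with ordered branching times $0<s_1<\cdots<s_{k-1}<T$. A standard $k$-spine / many-to-few decomposition gives
\begin{equation*}
\E_x\Big[\sum_{(v_1,\ldots,v_k)\text{ with tree }(\tau,\vec s)} \Phi(\vec s)\Big]
= 2^{k-1}\int_{0<s_1<\cdots<s_{k-1}<T} \Phi(\vec s)\, q^N_{\tau,\vec s}(x)\, d\vec s,
\end{equation*}
where $q^N_{\tau,\vec s}(x)$ is a spatial integral of products of the fundamental solutions $p^N$ of \eqref{PDE:A} along the edges of $\tau$.

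\textbf{Step 2 (spectral reduction and CPP identification).} For every edge of length $\ell\gg 1$, the spectral expansion of $p^N_\ell$ is dominated by its leading term; together with \eqref{def:h}, \eqref{eq:normalisation} and the critical choice $\lambda_1^N=-(1-\beta_N^2)/2$, this replaces $p^N_\ell(y,z)$ by $h^N(y)\tilde h^N(z)$ up to an error exponentially small in $\ell$. Plugging into $q^N_{\tau,\vec s}(x)$, the product telescopes: each leaf supplies a factor $\tilde h^N$ which integrates its spatial argument against $\phi\equiv 1$; each internal node at time $s_i$ contributes $\int_0^{L_N} (h^N)^2 \tilde h^N = \Sigma_N(L_N)^2 \sim \sigma^2 N^c$ (Proposition~\ref{prop:concentration}); and the root contributes $h^N(x)$. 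Under the rescaling $s_i = r_i N^{1-c}$, the Jacobian $N^{(k-1)(1-c)}$ combines with $(\sigma^2 N^c)^{k-1}$ to produce a finite density in $\vec r$. Summing over the $(2k-3)!!$ topologies and dividing by $\P_x(Z^N(T)>0)^{-1}\sim \sigma^2 t N/(2h^N(x))$ and by the $k$-th moment of $Z^N(T)/N$ given in Theorem~\ref{th:Yaglom} reproduces exactly the mixture in \eqref{eq:moments-CPP}: the parameter $\theta$ arises from size-biasing the exponential Yaglom limit, the i.i.d.\ variables $U_\ell^\theta$ encode the branching times of the reduced Galton--Watson tree conditioned on survival, and the independent permutation $\sigma$ appears because the CPP labels leaves in planar order while $(v_1,\ldots,v_k)$ is unordered.

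\textbf{Main obstacle.} The dominant technical difficulty is to justify the spectral leading-term approximation of Step~2 uniformly along $k$-spine trajectories. Each spine is a Doob $h$-transform of the BBM (driven by $h^N$) and must relax to its quasi-stationary distribution on time scales much shorter than $N^{1-c}$; otherwise the replacement $p^N_\ell \approx h^N\otimes \tilde h^N$ fails for short edges near the root or between closely-spaced branching events. This requires a quantitative spectral-gap estimate $\lambda_2^N-\lambda_1^N$ uniform in $N$, plus control on higher eigenfunctions paired against $h^N$ and $\tilde h^N$, refining the variance analysis underlying Proposition~\ref{prop:concentration}. One must also separately handle the set of $(\tau,\vec s)$ with some $s_i-s_{i-1}$ of order $o(N^{1-c})$, showing it contributes negligibly in the limit. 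Once these spectral and short-edge estimates are in place, the computation collapses to the combinatorial identity matching \eqref{eq:moments-CPP}, and convergence in distribution follows by matching joint moments.
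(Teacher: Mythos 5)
Your overall strategy --- moments computed through a $k$-spine/many-to-few decomposition, spectral replacement of the heat kernel by $h^N\otimes\tilde h^N$, a factor $\int (h^N)^2\tilde h^N\sim\sigma^2N^c$ per branch point, normalisation by the Kolmogorov estimate --- is exactly the engine the paper uses. However, the final step of your Step~2 contains a genuine error. Dividing the unconditioned $k$-point correlation sum by the $k$-th moment of $Z^N(T)/N$ computes $\E\big[\sum\phi\big]/\E[(Z^N/N)^k]$, not $\E\big[(Z^N)^{-k}\sum\phi\big]$, and these have different limits: the former ratio converges to $\E[\phi(U_{\sigma_i,\sigma_j})]$ with the $U_\ell$ i.i.d.\ uniform, \emph{not} to the $\theta$-mixture $\E[\phi(H_{\sigma_i,\sigma_j})]$ of \eqref{eq:moments-CPP} that the theorem asserts. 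The two laws differ precisely because uniform sampling from a population of random size is not the same as size-biased sampling. To de-bias correctly one needs the \emph{joint} convergence of the population size and the distance structure; the paper obtains it by first proving Gromov-weak convergence of the full metric measure space $\bar M^N_t$ (Theorem~\ref{th:main-theorem}, via the moment-determinacy criterion of Proposition~\ref{lem:convDetermining}, which also requires verifying the growth condition \eqref{eq:momentCondition} for the limiting mass), and then applying the continuity of $[X,d,\nu]\mapsto[X,d,\nu/|X|]$ together with Proposition~\ref{SAmpling-CPP} --- which is where the $\theta$-mixture actually comes from. Your remark that ``$\theta$ arises from size-biasing the exponential Yaglom limit'' points at the right phenomenon but is not implemented, and the naive division you propose would yield the wrong answer.

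Beyond this, essentially all of the mathematical content of the theorem sits in the steps you defer as ``obstacles'': the uniform heat-kernel estimates (Lemma~\ref{lem:hK}), the Green's-function control of short edges and times near branch points (Lemma~\ref{lem:short_time_variance}, used in Lemmas~\ref{lem:rb} and~\ref{lem:unif_cv}), and the identification of the limiting recursion with the planar moments of the CPP (Lemma~\ref{lem:cv:moments_L} against Proposition~\ref{prop:planar_moment_CPP}). The paper also avoids your sum over the $(2k-3)!!$ topologies by working with the planar comb generated by $k-1$ i.i.d.\ uniforms, which is what makes the induction on $k$ (Proposition~\ref{eq:rec_L} and Corollary~\ref{cor:mspine_rescaled}) tractable. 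As it stands the proposal is a correct road map for the easy part of the argument, with a wrong turn at the sampling step and the hard part left unexecuted.
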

\begin{rem}
	{The limiting binary genealogy can be obtained on any time scale $N^{1-c}$, with $c\in(0,1),$ by choosing $L$ in \eqref{eq:def_beta} accordingly. }
\end{rem}

\begin{rem}
	\label{rk:Feller}
	Alternatively, one could describe this scaling limit when the process starts with approximately $N$ particles in a {front-like} configuration, i.e.~distributed
	according to the stable configuration $\tilde h$. In this setting,
	the system should exhibit a ``fitness wave” behaviour: the population size
	stays of order $N$ and fluctuate around the stable configuration $\tilde h$. After rescaling the total population size by
	$N$, fluctuations of order $1$ in the process $Z$ are
	expected to emerge on timescale $N^{1-c}$.
	Our Yaglom law suggests that these demographic fluctuations converge to a Feller diffusion.
	The proof of this fact can be obtained by combining our Kolmogorov estimate and
	our Yaglom law. The key idea of the proof is to control the Laplace transform
	of the additive martingale
	\begin{equation}
		Y(t)=\sum_{v\in \mathcal{N}_t}h(x_v(t)),
		\label{eq:def_martingale}
	\end{equation}
	as done in \cite{tourniaire2021branching}.
	In turn, this Laplace transform can be controlled by computing all moments of the additive martingale
	$Y$, following the same line of argument as developed in this paper.

	It is known \cite{birkner2005alpha}
	that the genealogy of a population whose demographic fluctuations are described
	by Feller diffusion is given by a time-changed Kingman coalescent.
	In the present work, we show
	that, on a deterministic timescale, the genealogical structure spanned by $k$ individuals sampled at a fixed
	time horizon only comprises binary mergers (see Theorem~\ref{th:genealogy}).

\end{rem}

\subsection{Biological motivation}

\label{sec:motivation}

The Wright-Fisher (WF) model with mutations and selection
is a classical framework for studying the evolutionary dynamics of an asexual population
undergoing selection. The population size is constant, equal
to $N\in\mathbb{N}$, and the system evolves in discrete time. The WF dynamics depend
on two parameters: the mutation rate $\mu_N>0$ and the selection coefficient
$s_N\in\mathbb{R}$.
At each step, the population is renewed in two steps. First,
each of the $N$ new individuals independently chooses a parent from the previous
generation. The probability that a parent of type $k$, i.e.,~carrying $k$ mutations,
is chosen is
proportional to its fitness $(1+s_N)^k$.  Each offspring inherits the mutations of its parent.
Second, the $i$-th individual acquires $X_i$ new mutations, where
$(X_i)_{1\leq i\leq N}$ is a sequence of i.i.d.~Poisson($\mu_N$)-random variables.
Mutations are said to be deleterious (resp.~beneficial)
or equivalently, that selection is negative (resp.~positive) if $s_N<0$ (resp.~$s_N>0$).

The evolutionary dynamics of an asexual population accumulating beneficial
mutations are now well
understood~\cite{neher2013genealogies,berestycki13,durrett2011,schweinsberg2017rigorous,roberts2021gaussian}.
In this setting, selection favours new mutations,
which can rapidly spread and fix in the population.
Looking backward in time, the genealogy is described by a multiple-merger
coalescent, capturing the fact that many lineages trace their ancestry
to a few highly successful individuals.
As a result, populations undergoing positive selection typically exhibit reduced genetic diversity compared to neutral
populations ($s_N=0$).
When selection is negative,
the population undergoes two opposite forces: \textit{natural selection},
which tends to eliminate individuals carrying many mutations, and \textit{mutational pressure},
which introduces new deleterious mutations.
This interplay leads to complex evolutionary dynamics that are not yet fully understood.
In this setting, the type with the fewest mutations is called the \emph{best class}.
Because mutations are unidirectional and the population is finite,
the current best class will eventually be lost.
This phenomenon is referred to as \emph{Muller's ratchet}~\cite{Muller1964}.
Many studies have investigated this model to estimate the rate at which the ratchet clicks~\cite{Etheridge2009,Igelbrink2023,Casanova2022,Pfaffelhuber_2012}.
It is also known since the work of Haigh~\cite{Haigh1978} that between two clicks, the distribution of types (often referred to as the fitness wave) is
well-approximated by a Poisson distribution with parameter $\mu_N/s_N$.

In this framework, the family of parameter scalings
\begin{equation}
	\label{eq:log_bulk}
	\mu_N=N^{-\beta}, \quad \frac{\mu_N}{s_N}=-c\log (N), \quad \beta\in(0,1), \;
	c\in (0,1),
\end{equation}
is commonly referred to as the moderate mutation-selection regime~\cite{Igelbrink2023}. For this choice of parameters, we see that
the bulk of the fitness wave is concentrated around $c\log(N)$, and
the best class contains, in expectation, $N^{1-c}$ individuals.
The BBM $\mathbf{X}^N$ is constructed in such a way that the macroscopic behaviour of the branching system
resembles that of the Wright-Fisher dynamics in the moderate mutation-selection regime:
the bulk of the type distribution $\tilde h$ is within a distance of
order $1$ of $c \log(N)$ for some $c\in(0,1)$
(see \eqref{eq:def_beta} and \eqref{eq:normalisation}).

We now briefly interpret the result stated in
Propositions~\ref{prop:concentration} and~\ref{prop:concentration2}.
As discussed in Section~\ref{sec:BBM} $h(x)$ can be thought of as the reproductive
value of a particle located at $x$,  the distribution $\tilde{h}(y)\,dy$ describes the stable
configuration of the system and $\Sigma^2$ measures the {reproductive variance} of the BBM.
Propositions~\ref{prop:concentration} and~\ref{prop:concentration2} show that the integral defining
$\Sigma^2$ is concentrated in $[0, A]$, suggesting that the demographic
fluctuations (and thus the evolutionary dynamics) of the BBM are driven by particles located in this region. In $[0, A]$,
the expected number of individuals is of order $N^{1-c}$ (see Proposition~\ref{prop:concentration2}),
matching the size of the best class in the WF model
under the Poissonian approximation.

Let us now discuss the content of \cref{th:Kolmogorov}, \cref{th:Yaglom}, and \cref{th:genealogy}. These results
suggest that the evolutionary dynamics of the fitness wave resemble those of a neutral population~\cite{Kingman1982} on a polynomial timescale $N_e:=N^{1-c}$.
In population genetics, the quantity $N_e$ corresponds to the \textit{effective population size}~\cite{wright1931evolution,charlesworth2009effective} of the
system, that is,  the size
of an idealised unstructured neutral population that would experience
the same level of genetic drift as the actual population under study.
Our results indicate that the evolutionary
dynamics of a population accumulating deleterious mutations is driven by
its best class, in agreement with previous conjectures~\cite{Charlesworth93,BKT2023+}.

\label{sec:related}
\subsection{Connection with previous work}
Analyzing the Wright–Fisher dynamics introduced in Section \ref{sec:motivation}
is challenging, as constant population size induces strong dependencies between
individuals. To address this, diffusion approximations have been proposed~\cite{Etheridge2009,Pfaffelhuber_2012},
whereas other works have tried to simplify the discrete model \cite{Igelbrink2023,Casanova2022}.
We follow the latter approach and replace natural selection with \textit{truncation selection},
removing individuals with insufficient fitness. We further assume the accumulation of
many small-effect mutations, so that the evolution of fitness can be approximated by
Brownian motion \cite{Desai:2007aa,Barton:2017aa}.

Our model is a variant of the BBM with absorption studied by Berestycki, Berestycki,
and Schweinsberg~\cite{berestycki13}, which was introduced to investigate the
effect of the accumulation of beneficial mutations on the genealogy of a population.
In~\cite{berestycki13}, the authors consider a BBM with drift $-1$ and branching
rate $1/2$, where particles are killed upon reaching $0$. In this framework,
each individual in the population is represented by a point $x \in \R^+$,
interpreted as its fitness.
Fitness evolves as a Brownian motion, while selective pressure is represented by
a moving barrier with constant speed $1$, which removes individuals whose fitness
becomes too low.
Intuitively, a particle located far to the right corresponds to an individual
that has accumulated many beneficial mutations, more rapidly than the rest of the
population. Such an individual produces a large number of offspring before eventually
being absorbed by the moving barrier. In this setting,
the genealogy of the system converges to the Bolthausen--Sznitman
coalescent~\cite{Bolthausen1998}, and fluctuations in the population size
are described by Neveu's continuous-state branching process~\cite{neveu}.

In the present work, we model individuals accumulating deleterious mutations:
the fitness of a particle is now given by $L - x$.
While in~\cite{berestycki13} particles are allowed to accumulate an arbitrarily large
number of beneficial mutations, here, the fittest individuals are those with
the fewest mutations, which cannot be less than $0$. In our BBM model, this lower
bound on the number of
mutations is enforced through reflection at $0$. This reflective boundary at
$0$ plays the role of a \emph{cut-off}, preventing the emergence of multiple
mergers in the genealogy (or equivalently,
the appearance of jumps in the limiting fluctuations of the system).

The proofs of our results rely on a method of moments for marked metric measure spaces,
developed in~\cite{foutel22}, to establish the joint convergence of
the population size and genealogy of a class of branching systems.
This method was previously applied
in~\cite{tourniaire2023tree, foutel2024convergence} to study the genealogy,
demography, and spatial configuration of a BBM with inhomogeneous branching rate,
negative drift, and killing at $0$. In that framework, the BBM served as a toy model
for the dynamics at the tip of an invasion front in a cooperating population.
For a certain parameter regime, known as the semipushed regime~\cite{tourniaire2021branching},
the reproductive variance $\Sigma^2$  (whose definition is as in \eqref{eq:def_Sigma})
scales like $N^c$ for some $c \in (0,1)$, as in
Proposition~\ref{prop:concentration}. In both the semipushed~\cite{tourniaire2021branching} and the
moderate mutation-selection regimes, the relevant timescale for
the scaling limit of demographic fluctuations and the genealogy is given by
$N/\Sigma^2 = N^{1-c}$.
However, the limiting behavior of the BBM
in~\cite{tourniaire2021branching, foutel2024convergence} differs significantly
from what we observe here. In semipushed waves, the demographic fluctuations are
described by a $(2-c)$-stable CSBP, and the genealogy converges to a
Beta$(c, 2 - c)$-coalescent.
The key distinction lies in the concentration of reproductive variance:
in~\cite{tourniaire2021branching,foutel2024convergence}, the mass
in the integral \eqref{eq:def_Sigma} is concentrated
in an interval $I$ where the expected number of individuals is microscopic,
meaning $N\int_I \tilde h < 1$, whereas in our setting, it is mesoscopic, i.e.,
$1 \ll N\int_I \tilde h \ll N$ (see~Proposition~\ref{prop:concentration2}).

Here, the limiting genealogy belongs to the universality class of neutral
evolution models with an effective population size. Typically (see \cite{powell19,tourniaire2023tree,forien2025stochastic}), such behaviour arises when the reproductive variance,
computed from the Perron–Frobenius eigenvectors of the branching mechanism,
is of the same order as the demographic parameter $N$.
Our analysis shows that this condition can be relaxed: if the reproductive variance, rescaled by the effective population size (which grows much more slowly than the actual population size), converges, then the same behaviour emerges

\section{Outline of the proof}

\subsection{The method of moments for metric measure spaces}
\label{sec:mm_space}

{As
	in~\cite{tourniaire2023tree,foutel2024convergence}, we enrich the structure of the branching diffusion to prove the joint convergence of the demographic fluctuations and the genealogy of the process. The BBM and its genealogy are encoded as a random metric measure space, and we prove convergence in the Gromov-weak topology using the method of moments introduced in~\cite{foutel22}.}

A metric measure space $M=(X, d, \mu)$ is  a complete separable metric
space $(X,d)$ equipped with a finite measure~$\mu$. Let $|X|:=\mu(X)$ and note
that we do not require $\mu$ to be a probability measure. We say that $(X,d,\mu)$
and $(X',d',\mu')$ are equivalent if there exists an isometry $\varphi$ between
the supports of $\mu$ and $\mu'$ such that $\mu'$ is the pushed forward of $\mu$
by $\varphi$. We denote by $\mathbb{M}$ the set of equivalence
classes of mm-spaces.
\begin{definition}[Polynomials \textnormal{\cite[Definition 2.3]{greven2009convergence}}] \label{def:polynomials}
	A functional $\Phi:\mathbb{M}\to\mathbb{R}$ is a polynomial of degree $k\in\mathbb{N}$
	if there exists a bounded continuous function
	$\phi:[0,\infty)^{\binom{k}{2}}\to \mathbb{R}$ such that
	\begin{equation} \label{def:polynomial}
		\Phi((X,d,\mu))=\int_{X^k} \phi(d(v_i,v_j)_{1\leq i<j\leq k}))
		\prod_{i=1}^{k}\mu(dv_i).
	\end{equation}
	We write $\Pi$ for the set of all polynomials.
\end{definition}

\begin{definition}[Gromov-weak topology \textnormal{\cite[Definition 2.8]{greven2009convergence}}] The Gromov-weak topology is defined as the topology induced by the polynomials, that is,	the smallest topology making all polynomials continuous. A sequence $(X^n,d^n,\mu^n)$ is said to converge  to $(X,d,\mu)$ in $\mathbb{M}$ with respect to the Gromov-weak topology if and only if $\Phi(X^n,d^n,\mu^n)$ converges to $\Phi(X,d,\mu)$ for all $\Phi\in \Pi$.
\end{definition}

\begin{definition}\label{def:gromov-topology}
	A random mm-space is a random variable with values in $\mathbb{M}$, endowed
	with the Gromov-weak topology and the associated Borel $\sigma$-field.
\end{definition}

Many properties of the marked Gromov-weak topology are derived in
\cite{depperschmidt_marked_2011} under the additional assumption that
$\mu$ is a probability measure. The next result shows that ${\Pi}$ forms
a convergence determining
class only when the limit satisfies a moment condition, which is a
well-known criterion for a real variable to be identified by its moments,
see for instance \cite[Theorem~3.3.25]{durrett_probability_2019}.

\begin{definition}
	For $\Phi\in\Pi$ and a random mm-space $(X,d,\mu)$, the quantity
	$\mathbb{E}[(\Phi(X,d,\mu))]$ is called the moment of $(X,d,\mu)$ associated to $\Phi$.
\end{definition}

\begin{proposition}[Method of moments
		\textnormal{\cite[Lemma~2.7]{depperschmidt2019treevalued}}] 	\label{lem:convDetermining}

	Suppose that $(X,d,\mu)$ is a random mm-space satisfying
	\begin{equation} \label{eq:momentCondition}
		\limsup_{p \to \infty} \frac{\E[|X|^p]^{1/p}}{p} < \infty.
	\end{equation}
	Let $((X^n, d^n, \mu^n))$ be a sequence of random mm-spaces. If
	\[
		\lim_{n \to \infty} \E\big[ \Phi\big(X^n, d^n, \mu^n\big) \big]
		= \E\big[ \Phi\big(X, d, \nu\big) \big],
	\]
	for all $\Phi \in {\Pi}$, then, the sequence $((X^n, d^n, \mu^n]))$
	converges in distribution to
	$(X,d,\nu)$ for the marked Gromov-weak topology.
\end{proposition}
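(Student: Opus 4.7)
The statement is an enhancement of the classical fact \cite{greven2009convergence} that polynomials form a convergence-determining class on the space of \emph{probability} metric measure spaces, extended to \emph{finite} mm spaces at the cost of the moment assumption \eqref{eq:momentCondition}. The plan is to argue via Prokhorov's theorem in two steps: first, establish tightness of the sequence $((X^n, d^n, \mu^n))$ in the Gromov-weak topology on $\mathbb{M}$; second, identify every subsequential limit with $(X, d, \mu)$ through a moment-uniqueness argument.

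For tightness, the key observation is that the total mass is itself a polynomial value: $|X^n| = \Phi_{\mathbf{1}}(X^n, d^n, \mu^n)$, where $\Phi_{\mathbf{1}}$ is of degree one with test function $\phi \equiv 1$. Convergence of $\mathbb{E}[\Phi_{\mathbf{1}}(X^n, d^n, \mu^n)]$ yields bounded first moments of $(|X^n|)$, hence tightness of the total-mass sequence by Markov. To upgrade this to tightness in the Gromov-weak topology on finite mm spaces, I would use polynomials whose test functions are continuous approximations of $\mathbf{1}_{\{\max_{i<j} d(v_i, v_j) \le \varepsilon\}}$: these evaluate essentially to the mass concentrated in sets of diameter at most $\varepsilon$, so uniform control over them prevents mass from escaping by being scattered over arbitrarily many distant clusters. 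This matches the standard tightness criterion for the Gromov-weak topology on finite mm spaces.

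For uniqueness, let $(Y, d_Y, \nu)$ be any subsequential weak limit. The polynomials are continuous on $\mathbb{M}$ and satisfy the bound $|\Phi(X^n, d^n, \mu^n)| \le \|\phi\|_\infty |X^n|^k$; together with the convergence of $\mathbb{E}[|X^n|^{k+1}]$, this yields uniform integrability and hence $\mathbb{E}[\Phi(Y, d_Y, \nu)] = \mathbb{E}[\Phi(X, d, \mu)]$ for every $\Phi \in \mathbf{\Pi}$. Conditionally on $\{|X| > 0\}$, write $\alpha := |X|$ and $P := \mu/\alpha$. Each polynomial factorises as $\Phi(X, d, \mu) = \alpha^k \Phi'(X, d, P)$, where $\Phi'$ is the analogous polynomial on probability mm spaces. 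Taking $\Phi' \equiv 1$ recovers all moments of $\alpha$; the Carleman-type growth \eqref{eq:momentCondition} ensures that these uniquely determine the distribution of $\alpha$. For general $\Phi'$, the same moment-problem argument applied to the joint quantities $\mathbb{E}[\alpha^k \Phi'(X, d, P)]$, combined with the Greven-Pfaffelhuber-Winter uniqueness for probability mm spaces, pins down the joint law of $(\alpha, P)$ and hence of $(X, d, \mu)$.

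The main obstacle I anticipate is transferring the Carleman bound from the scalar total mass $\alpha$ to the \emph{joint} law of $\alpha$ and the normalised probability mm space. The resolution is that for a fixed polynomial $\Phi'$ on probability mm spaces, $\Phi'(X, d, P)$ is bounded, so the joint moments $\mathbb{E}[\alpha^k \Phi'(X, d, P)]$ satisfy the same Carleman growth as $\mathbb{E}[\alpha^k]$, uniformly in $\Phi'$. This permits a functional-analytic closing of the argument via a Stone-Weierstrass-type density within each moment sector, reducing everything to the probability-mm-space case already treated in \cite{greven2009convergence}.
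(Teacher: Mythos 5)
The paper does not prove this proposition at all: it is quoted verbatim from the cited reference (Depperschmidt et al., Lemma~2.7) and used as a black box, so there is no in-paper argument to compare yours against. Your overall architecture --- Prokhorov's theorem (tightness plus identification of subsequential limits), uniform integrability of polynomials from the convergence of $\E[|X^n|^{k+1}]$, and moment determinacy of the limit via the Carleman-type condition \eqref{eq:momentCondition} --- is the standard route taken in the literature for this lemma, and your resolution of the joint-determinacy issue is sound: \eqref{eq:momentCondition} gives the moment generating function of $|X|$ a positive radius of convergence, and joint moment determinacy of a pair consisting of such a variable and a bounded functional $\Phi'(X,d,P)$ is classical.

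The genuine gap is the tightness step. Bounded first moments of $|X^n|$ give tightness only of the real-valued sequence of total masses. Relative compactness in $\mathbb{M}$ for the Gromov-weak topology requires, by the Greven--Pfaffelhuber--Winter criterion, two further things: tightness of the distance-matrix distributions $d^n_*((\mu^n/|X^n|)^{\otimes 2})$, and a uniform control of the modulus of mass distribution, i.e.\ that for every $\eps>0$ the $\mu^n$-mass of points $v$ with $\mu^n(B_\delta(v))\le\delta$ is small uniformly in $n$ as $\delta\to 0$. Your one-sentence appeal to polynomials approximating $\mathbf{1}_{\{\max_{i<j}d(v_i,v_j)\le\eps\}}$ gestures at the first condition but does not address the second, which is the technical core of the cited lemma and requires a quantitative second-moment argument (comparing $\E[\Phi]$ for test functions of the form $\psi_\delta(d(v_1,v_2))$ against the corresponding limit moments, then letting $\delta\to0$ along a diagonal). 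As written, the proposal is a correct plan whose hardest step is left unexecuted; it would need this tightness argument spelled out before it could stand as a proof.
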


\subsection{The Metric Space associated to the BBM}\label{sec:mm}
Let $t>0$ and recall that ${\mathcal N}_{t}$ denotes the set of particles alive at time $t$ in the BBM $\mathbf{X}^N$ (recall that we drop the $N$-superscripts for simplicity). Define
\begin{equation}
	\mu_{t} \ :=  \sum_{v\in {\mathcal N}_{t}} \delta_{v}, \quad \text{and}
	\quad
	\forall v,v'\in{\mathcal N}_{t}, \ \
	d_t(v,v') = \ t - |v\wedge v'|, \label{eq:distance_mesure}
\end{equation}
where $|v\wedge v'|$ denotes the most recent time when $v$ and $v'$ had a common
ancestor. Let $M^N_{t}: = [{\mathcal N}_{t},d_t ,\mu_{t}]$ be the resulting random mm-space.
Finally, set
\begin{equation*}
	\bar \mu_{t} \ := \frac{1}{N} \sum_{v\in {\mathcal N}_{tN^{1-c}}} \delta_{v},
	\quad \text{and} \quad  \forall v,v'\in{\mathcal N}_{tN^{1-c}}, \quad
	\bar d_t(v,v') =  \left(t - \frac{1}{N^{1-c}}|v\wedge v'|\right),
\end{equation*}
and define the rescaled mm-space
$\bar M_t  :=  ({\mathcal N}_{tN^{1-c}},\bar d_t ,\bar \mu_{t})$.
In order to prove Theorem \ref{th:genealogy}, we will show that the rescaled mm-space
$\bar M_t$ converges to a limiting mm-space defined in the next section.

\subsection{The Brownian Coalescent Point Processes} \label{sect:CPP}
In this section, we introduce the Brownian Coalescent Point Processes (CPP).
As we  shall see, this object is the scaling limit of the mm-space associated to the BBM $\mathbf{X}^N$.

Let $T>0$.
Consider ${\cal P}$ a $PPP\left(\frac{dt}{t^2} \otimes d\ell\right)$.
Define
$$
	V_T \ = \ \inf\left\{y : (t,y)\in {\cal P}, t\geq T\right\},
$$
and
$$
	d_T(x,y) \ = \ \sup\{t  :  (t,z) \in {\cal P}\ \ \mbox{and} \ \ x \leq z \leq y\}, \quad  0<x<y<V_T.
$$
The Brownian Coalescent Point Process~\cite{popovic2004} is defined as
$$
	M_{\text{CPP}_T} \ :=  \bigg([0,V_T],d_T, \text{Leb}  \bigg).
$$

\begin{rem}\label{rem:exp}
	A direct computation shows that $V_T$ (which can be thought of as the population
	size at time $T$) is distributed as an exponential random variable
	with mean $T$. If the CPP encodes the size and the genealogy of critical
	branching processes, this is consistent with Yaglom's law for such processes.
\end{rem}

\begin{proposition}\label{moments:CPP}
	Let $k\in{\mathbb N}$.
	For any bounded continuous function $\phi:[0,\infty)^{\binom{k}{2}}\to \mathbb{R}$ with associated polynomial $\Phi$,
	we have
	$$
		\E\left[\Phi(M_{\text{\textnormal{CPP}}_T}) \right] \ = \ k ! T^{k} \E\left[\Phi( U_{\sigma(i),\sigma(j) } )   \right] ,
	$$
	where
	$(U_{i}; i\in[k-1])$ is a vector of uniform i.i.d. random variables on $[0,T]$,
	$$U_{i,j}= U_{j,i} = \max\{ U_{l}: l=i,\cdots, j-1\},$$ $\sigma$ is a random uniform permutation of $\{1, \dots, k\}$, and $\sigma$ and $(U_i)$ are independent.
\end{proposition}
\begin{proof}
	The proof of this result can be found in \cite{foutel22}, Proposition 4.
\end{proof}

{
\begin{proposition}[Sampling from the CPP]\label{SAmpling-CPP}
	Let $k\in\N$ and
	sample $k$ points, denoted by $v_1,\cdots,v_k$, uniformly at random from the CPP.
	Then $(d_T(v_i,v_j))$ is identical in law to  $(H_{\sigma(i),\sigma(j)})$
	where $(H_{i,j})$ be as in \eqref{eq:moments-CPP} (ii)
	and $\sigma$ is a random uniform permutation of $\{1,\cdots,k\}$, independent of $(H_{i,j})_{i,j}$.
\end{proposition}
\begin{proof}
	The proof is identical to~\cite[Proposition 4.3]{boenkost2022genealogy}.
\end{proof}
}
\begin{theorem}\label{th:main-theorem}
	Let $t>0$ and let $\sigma$ be as defined in Proposition~\ref{prop:concentration}. Let $M_{CPP_T}$ be a Brownian CPP with height $T=\frac{\sigma^2}{2}t$.
	Conditional on the event $\{Z_{tN^{1-c}}>0 \}$, the sequence
	$(\bar M_{t}; N\in\N)$ converges weakly to $M_{CPP_T}$
	with respect to the Gromov-weak topology.
\end{theorem}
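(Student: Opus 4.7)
The plan is to apply the moment criterion of Proposition~\ref{lem:convDetermining}. First, the total mass $|M_{CPP_t}| = Y_T$ of the limiting CPP with $T = \sigma^2 t/2$ is exponential with mean $T$, hence $\mathbb{E}[|M_{CPP_t}|^p]^{1/p}/p \leq e T$, so \eqref{eq:momentCondition} is satisfied. Combining this with Theorem~\ref{th:Kolmogorov}, convergence conditional on survival will follow from the unconditional estimate
\[
N \cdot \mathbb{E}_x\bigl[\Phi(\bar M^N_t)\bigr] \;\longrightarrow\; \tfrac{2 h(x)}{\sigma^2 t} \cdot \mathbb{E}\bigl[\Phi(M_{CPP_t})\bigr]
\]
for every polynomial $\Phi$ of degree $k$ with test function $\phi$, where $h(x)$ is the relevant limit of $h^N(x)$.

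To estimate these moments, I would expand the polynomial as
\[
\mathbb{E}_x[\Phi(\bar M^N_t)] = \frac{1}{N^k} \mathbb{E}_x\!\left[ \sum_{(v_1,\ldots,v_k)\in(\mathcal{N}^N_{tN^{1-c}})^k} \phi\!\left( \tfrac{d^N_{tN^{1-c}}(v_i,v_j)}{N^{1-c}}\right) \right]
\]
and apply the many-to-few ($k$-spine) decomposition prepared in Section~\ref{sec:spine}. This recasts the $k$-th moment as an integral over rooted binary trees with $k$ leaves decorated by $k-1$ branching times $0 < s_1 < \cdots < s_{k-1} < tN^{1-c}$, branching locations, and terminal locations, weighted by products of the transition density $p^N_s$ of \eqref{PDE:B} together with an initial factor $h^N(x)$ from the Doob $h$-transform. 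Under the critical normalization $\lambda_1^N = -(1-\beta_N^2)/2$, the exponential growth factors collapse and the identity is exact up to the combinatorial factor $k!$ from the labellings.

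Next, I would rescale branching times by $N^{1-c}$ and take $N\to\infty$. The one-spine Markov chain equilibrates on the $O(1)$ timescale to the invariant density $\tilde h^N$; the spectral gap identified through \eqref{eq:caract_eigenvalue} controls the mixing rate uniformly. Each branching event, once its spatial location is integrated against $\tilde h^N(y)$ and against two descendant reproductive-value factors $h^N(y)$, contributes precisely $\Sigma_N^2/N \sim \sigma^2/N^{1-c}$ via Proposition~\ref{prop:concentration}. Thus, after rescaling, the branching times become i.i.d.\ uniform on $[0, \sigma^2 t/2]$, and the pairwise distances are given by maxima as in \eqref{eq:moments-CPP}. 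Collecting the root factor $h^N(x)/N$, the $k-1$ branching factors $\sigma^2/N^{1-c}$ against $t$-length windows, and the combinatorial weight $k!$, one recovers exactly the moment formula of Proposition~\ref{moments:CPP} weighted by $\tfrac{2h(x)}{\sigma^2 t}$; dividing by $\mathbb{P}_x(Z^N(tN^{1-c})>0)$ (which tends to the same prefactor by Theorem~\ref{th:Kolmogorov}) yields $\mathbb{E}[\Phi(M_{CPP_t})]$.

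The main obstacle is the uniform control of the $k$-spine integral on the timescale $N^{1-c}$, which is what makes the Kingman (binary-merger) structure emerge. Concretely, one must rule out contributions where two branching times $s_i, s_{i+1}$ differ by $o(N^{1-c})$, since such configurations would produce simultaneous mergers incompatible with the CPP limit; one must show that the $\Sigma_N^2$ contribution concentrates inside $[0,A_N]$, localising the coalescences near the reflective boundary as claimed; and one must establish sharp heat-kernel asymptotics $p^N_s(x,y) \approx h^N(x)\tilde h^N(y)$ with uniform error bounds over $x\in\Omega_N$ and over times $s$ of order $1$ up to $N^{1-c}$. These analytic ingredients — essentially a quantitative spectral-gap statement for the reflected Brownian generator on $[0,L_N]$ as $\beta_N \to 1$ — carry the bulk of the technical work, while the CPP identification itself is then a bookkeeping exercise.
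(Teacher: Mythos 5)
Your proposal is correct and follows essentially the same route as the paper: the moment criterion of Proposition~\ref{lem:convDetermining}, the $k$-spine many-to-few decomposition (Proposition~\ref{th:many-to-few2}), equilibration of the spine so that each branching point contributes $\tfrac{1}{2}\int (h^N)^2\tilde h^N\sim\tfrac{\sigma^2}{2}N^c$ (Proposition~\ref{th:k-spine-cv}, Lemma~\ref{lem:cv:moments_L}), identification with the CPP moments of Proposition~\ref{moments:CPP}, and division by the Kolmogorov asymptotics of Theorem~\ref{th:Kolmogorov}, with the technical weight indeed carried by the uniform heat-kernel and short-time Green's function estimates (Lemma~\ref{lem:hK}, Lemma~\ref{lem:short_time_variance}). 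Two small imprecisions worth fixing: the spine's invariant density is $h^N\tilde h^N$ rather than $\tilde h^N$ (your $\Sigma^2$ bookkeeping is nonetheless correct), and since $h^N(x)$ diverges for fixed $x$ the unconditional statement should read $\left|N\,\E_x\left[\Phi(\bar M^N_t)\right]-\tfrac{2h^N(x)}{\sigma^2 t}\E\left[\Phi(M_{CPP_t})\right]\right|\to 0$ uniformly in $x$, the factor $h^N(x)$ then cancelling against the survival probability exactly as you intend.
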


\subsection{Marked metric measure spaces and the many-to-few formula}\label{sec:spine}

As explained above, our proof strategy consists in computing the moments of the
mm-space $\bar{M}_t$ associated with the BBM. To this end, we rely on the so-called
many-to-few formula, which expresses these moments in terms of the $k$-spine of the BBM.
Before introducing the $k$-spine tree and stating the many-to-few formula, we
must first enrich the structure of the mm-space by keeping track of particle marks.
For this purpose, we work with marked metric measure spaces, which we now introduce.

Let $(E,d_E)$ be a complete metric space (the mark space). A mmm-space is a triple
\[
	\mathscr{M}=(X,d,\vartheta),
\]
where $(X,d)$ is a complete separable metric space and $\vartheta$ is a finite measure on $X\times E$.
As in the unmarked setting, one may define a topology on the set (of equivalence classes)
of mmm-spaces $\mathbf{M}$ through polynomial functionals
\[
	{\bf \Phi}(\mathscr{M})
	= \int_{(X\times E)^k}
	{\bf \phi}\!\left( (d(v_i,v_j); x_i)_{1\le i<j\le k} \right),
\]
where $\phi:[0,\infty)^{\binom{k}{2}}\times E^k \to \mathbb{R}$ is any continuous bounded test function.
	{As in the unmarked case, the marked Gromov-weak topology is induced by polynomials.
		Moreover, a random mmm-space is a $\mathbf{M}$-valued random variable, where
		$\mathbf{M}$ is endowed with the marked Gromov-weak topology and its associated
		Borel $\sigma$-field. The moment of the random mmm-space $\mathscr{M}$ associated
		with the polynomial ${\bf \Phi}$ is then defined as $\mathbb{E}[{\bf \Phi}(\mathscr{M})]$.
	}

For $t\ge 0$, define the empirical measure and its rescaled version by
\[
	\vartheta_t := \sum_{v\in{\mathcal{N}}_t} \delta_{v,x_v},
	\qquad
	\bar{\vartheta}_t := \frac{1}{N}\sum_{v\in \mathcal{N}_{tN^{1-c}}} \delta_{v,x_v}.
\]
The spaces
\[
	\mathscr{M}_t := (\mathcal{N}_t, d_t, \vartheta_t),
	\qquad \text{and} \qquad
	\bar{\mathscr{M}}_t := (\mathcal{N}_{tN^{1-c}}, \bar d_t, \bar \vartheta_t),
\]
are marked metric measure spaces.

Next, we introduce the $k$-spine tree associated to the BBM and state
our many-to-few formula.
\begin{definition}\label{def:1-spine}
	The spine process $(\zeta_t)_{t\geq 0}$ is the diffusion with generator
	\begin{equation}
		\frac{1}{2} f''(x) \ + \frac{v_1'(x)}{v_1(x)} f'(x), \qquad \ f'(0)=0, \quad f(\lcn)=0. \label{eq:gen-spine}
	\end{equation}
	Let $q_t(x,y)$ denote the probability kernel of the spine process.
	The generator \eqref{eq:gen-spine} is the Doob $h$-transform of
	the differential operator $\frac{1}{2}\partial_{xx}+\beta\partial_x+\frac{1}{2}$. In particular,
	\begin{equation}
		\label{eq:rel_qp}
		q_t(x,y)=\frac{h(y)}{h(x)}p_t(x,y),
	\end{equation}
	where $p$ is as in Lemma~\ref{lem:many-to-one0}.
\end{definition}

The next result is standard (see e.g.~\cite{etheridge_2011}).
\begin{proposition}\label{prop:invariant-1-spine}
	The spine process has a unique invariant measure given by
	\begin{equation}\label{eq:def_pi}
		\Pi(dx)  =  h(x)\tilde{h}(x) dx.
	\end{equation}
\end{proposition}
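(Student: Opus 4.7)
The plan is to verify invariance directly from the $h$-transform identity and then invoke ergodicity of the spine for uniqueness. Substituting $q_t^N(x,y) = h^N(y)\, p_t^N(x,y)/h^N(x)$ into $\int \Pi^N(dx)\, q_t^N(x,y)$ and cancelling $h^N(x)$ against the density of $\Pi^N$, one finds that, for all $y \in (0,L_N)$,
\begin{equation*}
\int_0^{L_N} \Pi^N(dx)\, q_t^N(x,y)
\;=\; h^N(y)\, w(t,y), \qquad
w(t,y) \,:=\, \int_0^{L_N} \tilde h^N(x)\, p_t^N(x,y)\, dx.
\end{equation*}
Thus invariance reduces to checking that $w(t,\cdot) \equiv \tilde h^N$ for every $t\geq 0$.

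For each fixed $y$, $p_t^N(\cdot,y)$ satisfies the backward Kolmogorov equation $\partial_t p = (\mathcal{G}_x + \tfrac{1}{2})p$ with the boundary conditions adjoint to those of \eqref{PDE:A}, i.e.\ pure Neumann at $0$ and Dirichlet at $L_N$. Differentiating $w$ under the integral and integrating by parts twice in $x$, I would rewrite
\begin{equation*}
\partial_t w(t,y) \,=\, \int_0^{L_N} \big((\mathcal{G}^* + \tfrac{1}{2})\tilde h^N\big)(x)\, p_t^N(x,y)\, dx \,+\, \mathrm{BT}(t,y),
\end{equation*}
where $\mathrm{BT}(t,y)$ collects the boundary contributions at $0$ and $L_N$. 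The integrand vanishes because $\tilde h^N$ is the left eigenfunction of $\mathcal{G}+\tfrac{1}{2}$ associated with the top eigenvalue $0$, a one-line check on the explicit formula $\tilde h^N(x) = \tilde c^N e^{\beta_N x}\sin(\gamma_N(L_N-x))$. The boundary terms at $L_N$ are zero since $\tilde h^N(L_N) = p_t^N(L_N,y) = 0$; those at $0$ collapse once one observes that \eqref{eq:caract_eigenvalue} implies $(\tilde h^N)'(0) = 2\beta_N\, \tilde h^N(0)$, which exactly cancels the Robin contribution produced by the Neumann condition satisfied by $p_t^N(\cdot,y)$ at $0$. Hence $\partial_t w \equiv 0$ and $w(t,\cdot) = w(0,\cdot) = \tilde h^N$, giving invariance. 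Since $\int h^N \tilde h^N\, dx = 1$ by \eqref{eq:normalisation}, $\Pi^N$ is in fact an invariant probability.

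For uniqueness, note that the spine process is a uniformly elliptic one-dimensional diffusion on $[0,L_N]$, reflected at $0$; because $h^N$ vanishes at $L_N$, the Doob drift $(v_1^N)'/v_1^N$ blows up there, so $L_N$ is an entrance boundary in Feller's classification. The process is therefore positive recurrent on $[0,L_N)$ and admits a unique invariant probability measure, which must coincide with $\Pi^N$. The only delicate point in this plan is the bookkeeping of the boundary terms in the integration by parts: they all cancel, but this hinges on the precise Robin condition at $0$ built into \eqref{PDE:A} together with the matching identity $(\tilde h^N)'(0) = 2\beta_N \tilde h^N(0)$ satisfied by the left eigenvector.
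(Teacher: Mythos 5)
Your proof is correct. Note that the paper itself offers no argument here: Proposition~\ref{prop:invariant-1-spine} is simply declared ``standard'' with a citation, so your write-up supplies a verification the paper omits. Your two key computations both check out: $(\mathcal{G}^*+\tfrac12)\tilde h^N=0$ follows from $\beta_N^2+\gamma_N^2=1$, and the boundary identity $(\tilde h^N)'(0)=2\beta_N\tilde h^N(0)$ is exactly \eqref{eq:sin/cos} (i.e.\ $\sin(\gamma_N\lcn)=\gamma_N$, $\cos(\gamma_N\lcn)=-\beta_N$), which is what makes the $x=0$ boundary term vanish against the Neumann condition on $p^N_t(\cdot,y)$; the terms at $\lcn$ die because both $\tilde h^N$ and $p^N_t(\cdot,y)$ vanish there. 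Two remarks. First, the invariance step admits a one-line shortcut via Lemma~\ref{lem:decomp_spect}: orthogonality of the $v_k$ gives $\int_0^{\lcn}\tilde h^N(x)p^N_t(x,y)\,dx=\tilde h^N(y)$ directly, with no integration by parts or boundary bookkeeping. Second, your uniqueness argument is right but slightly under-justified as stated: blow-up of the drift $(v_1^N)'/v_1^N\sim -(\lcn-x)^{-1}$ does not by itself make $\lcn$ an entrance boundary; one should note that the scale density behaves like $(\lcn-x)^{-2}$ near $\lcn$ (non-integrable, so $\lcn$ is inaccessible) while the speed density $\propto v_1^N(x)^2\propto h^N\tilde h^N$ is integrable, which gives positive recurrence and also re-derives $\Pi^N$ as the normalised speed measure.
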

We now move to the definition of the $k$-spine tree.  Let $(U_1,...,U_{k-1})$ be independent random variables uniformly distributed in $[0,t]$. Define
\begin{equation}
	\label{eq:UPP}
	\forall \; 1\leq i < j\leq k-1,\quad U_{i,j}=U_{j,i}= \;\max\{U_i,...,U_{j-1}\}.
\end{equation}
Let ${\mathbb T}$ be the unique tree of depth $t$ with $k$ leaves such that the tree distance between the $i$-th and the $j$-th leaves is given $U_{i,j}$. This tree is \emph{ultrametric} and \emph{planar} in the sense that
\begin{equation*}
	\forall i,j,l\in[k], \quad U_{i,j}\leq U_{i,l}\vee U_{l,j},
\end{equation*}
(ultrametric) and the inequality becomes an equality if $i<l<j$ (planar).
The depth of the first branching point in the $k$-spine tree is thus given by
\begin{equation}
	\label{eq:def_tau}
	\tau=\max_{i\in[k-1]}U_i.
\end{equation}
Marks are then assigned as follows. On each branch of the tree, the mark evolves
according to the spine process $\zeta$ and it branches into two independent
diffusions at each branching point of $\mathbb{T}$. The resulting planar marked
ultrametric tree will be referred to as the
$k$-spine tree and denoted by $\mathcal{T}$.
We write $\mathcal{T}_0$ (resp.~$\mathcal{T}_1$) for the left (resp.~right) marked
subtree attached to the first branching point of the tree $\mathcal{T}$.
Note that these two subtrees are also planar marked
ultrametric trees.
We will denote by $Q^{k,t}_x$ the distribution of the  $k$-spine tree
of height $t$ rooted at $x$.
\begin{figure}
	\centering
	\includegraphics[width=.5\textwidth]{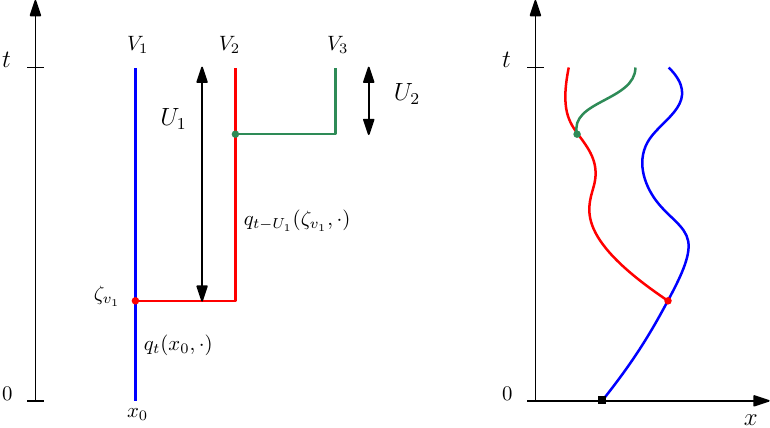}
	\caption{$k$-spine tree with $k=3$. Left panel: planar tree $\mathbb{T}$ generated
		from $2$ i.i.d. uniform random variables $(U_1,U_2)$.
		Right panel: branching $1$-spines running along the branches of the tree
		${\mathbb T}$.
	}
	\label{fig:k-spine}
\end{figure}
In the following, $\mathcal B$ will denote the set of  $k-1$ branching points of
the $k$-spine and $\mathcal L$ will denote the set of $k$ leaves.
We will denote by $\zeta_v$ the mark (or the position) of the node
$v\in \mathcal B\cup \mathcal L$.
Finally, $(V_i; i\in[k])$ is the enumeration of the leaves from left to right
in the $k$-spine (i.e., $V_i$ is the leaf with label $i$). See Figure~\ref{fig:k-spine} for an illustration of these definitions. We refer to~\cite{foutel22}
for a formal construction of the $k$-spine tree.

\begin{theorem}[Many-to-few formula]
	\label{many-to-few00}
	{Let $k\in\mathbb{N}$, $t>0$ and $x\in(0,L)$.
	Let $\phi:[0,\infty)^{\binom{k}{2}}\to \mathbb{R}$ and $f_i:E\to \R$, $i\in[k]$, be bounded
	measurable functions and define
	\[
		\forall \mathscr{M}=(X,d,\vartheta)\in \mathbf{M}, \quad \tilde{\bf \Phi}(\mathscr{M})
		=\int_{ (X\times E)^k} \; \mathbf{1}_{\{(v_i) \text{ distinct }\}}
		\phi(d(v_i,v_j)_{1\leq i<j\leq k})
		\prod_{i=1}^{k}f_i(x_i)\vartheta(dv_i\otimes dx_i).
	\]
	Then we have}
	\begin{equation*}
		\mathbb {E}_x\left[ \tilde{\bf \Phi}(\mathscr{M}_t) \right]  =  k !
		h(x) t^{k-1} Q_{x}^{k,t}\left( \Delta
		\phi(( U_{\sigma(i),\sigma(j)}))\prod_{i=1}^k f_i\left({\zeta}_{V_{\sigma(i)}}\right)\right)
		\quad \text{with} \quad \Delta
		:= \left(\frac{1}{2}\right)^{k-1}\prod_{v\in {\mathcal B}}  h( {\zeta}_{v})
		\prod_{i=1}^k \frac{1}{h( {\zeta}_{V_i})},
	\end{equation*}
	where $(U_{i,j})$ is as in \eqref{eq:UPP} and
	$\sigma$ is an independent random permutation of $[k]$.
\end{theorem}

{The proof of this is the object of Section~\ref{sec:proof_many_to_f}.}
\begin{definition}[Accelerated $k$-spine]\label{def:acc:spine}
	Consider the spine process accelerated by $N^{1-c}$, i.e. the transition
	kernel of the spine process
	is now given by $q^N_{tN^{1-c}}(x,y)$. {We denote this kernel by
			$\bar{q}^N_t(x,y)$}.
	Consider the same planar structure as before, i.e., the depth is $t$
	and the distance between points at time $t$ is given by (\ref{eq:UPP}).
	We denote by $\bar Q_{x}^{N,k,t}$ the distribution of the  $k$-spine tree
	obtained by running accelerated  spines along the branches.
	For any vertex $v$ in the accelerated $k$-spine tree, $\bar \zeta_v$ will denote the mark of the vertex $v$.
	Finally, we consider the family of measures $(\hat{Q}^{k,t}_x)$ defined  by
	\[
		\frac{d\hat{Q}^{k,t}_x}{d\bar{Q}^{k,t}_x}=
		\left(\frac{t}{N^{c}}\right)^{k-1}.
	\]
\end{definition}

\begin{proposition}[Rescaled many-to-few formula]\label{th:many-to-few2}
	{Let $k\in\mathbb{N}$, $t>0$ and $x\in(0,L)$.
	Let $\phi:[0,\infty)^{\binom{k}{2}}\to \mathbb{R}$ and $f_i:E\to \R$, $i\in[k]$, be bounded
	measurable functions and define
	\[
		\forall \mathscr{M}=(X,d,\vartheta)\in \mathbf{M}, \quad \tilde{\bf \Phi}(\mathscr{M})
		=\int_{ (X\times E)^k} \; \mathbf{1}_{\{(v_i) \text{ distinct }\}}
		\phi(d(v_i,v_j)_{1\leq i<j\leq k})
		\prod_{i=1}^{k}f_i(x_i)\vartheta(dv_i\otimes dx_i).
	\]
	Then we have}
	\begin{equation*}
		\mathbb {E}_x\left[\tilde  {\bf\Phi}(\bar{\mathscr{M}}_t) \right] =
		\frac{1}{N} k !
		h(x)  \hat{Q}_{x}^{k,t}\left( \bar \Delta
		\phi(( U_{\sigma(i),\sigma(j)}))\prod_{i=1}^k f_i\left({\zeta}_{V_{\sigma(i)}}\right) \right)
		\quad \text{with} \quad \bar\Delta
		:= \left(\frac{1}{2}\right)^{k-1}\prod_{v\in {\mathcal B}}  h( {\bar \zeta}_{v})
		\prod_{i=1}^k \frac{1}{h( \bar{\zeta}_{V_i})},
	\end{equation*}
	$(U_{i,j})$ is as in \eqref{eq:UPP} and
	$\sigma$ is an independent random permutation of $\{1,...,k\}$.
\end{proposition}

\begin{proof}
	This is a direct consequence Theorem \ref{many-to-few00} after
	rescaling the measure $\vartheta_t$ by $N$ and time by $N^{1-c}$.
\end{proof}

The proof of Theorem \ref{th:main-theorem} relies on the following convergence
result for the rescale $k$-spine measure $\hat Q_x^{k,t}$.
\begin{proposition}
	\label{th:k-spine-cv}
	Let $t>0$ and $x>0$. Let $k\in\mathbb{N}$.
	For any bounded continuous function $\phi:[0,\infty)^{\binom{k}{2}}\to \mathbb{R}$, we have
	\begin{equation*}
		\hat Q_{x}^{k,t}\left(  \bar{\Delta}
		\phi(( U_{\sigma(i),\sigma(j)}))  \right)
		\xrightarrow{N\to\infty} \left(\frac{\sigma^2t}{2}\right)^{k-1} {\mathbb E}
		\left( \phi ((U_{\sigma(i),\sigma(j)})) \right),
	\end{equation*}
	where $\sigma$ is a uniform permutation of $\{1,\dots,k\}$ independent of $\{ U_{i,j}; i,j \leq k \}$.
\end{proposition}

{We conclude this section with two remarks. First, note that the method of moments
in Proposition~\ref{lem:convDetermining} is stated for mm-spaces only, and not for
mmm-spaces. Second, observe that any moment of the mm-space $\bar{M}_t$ in
Proposition~\ref{th:k-spine-cv} can be computed by setting all the functions $f_i$
equal to~$1$ (as done in Proposition~\ref{th:k-spine-cv}).
}

\subsection{Outline of the proof and heuristics}\label{sec:heuristics}
Theorem~\ref{th:Yaglom} and Theorem~\ref{th:genealogy} can be deduced from
Theorem~\ref{th:main-theorem}. The idea behind the proof of
Theorem~\ref{th:main-theorem} consists in identifying the limiting moments of
the mm-space $\bar{M}_t^N$ to that of a Brownian CPP of height {$T=\frac{\sigma^2}{2}t$}
(see Theorem~\ref{th:main-theorem}) using a spinal decomposition
(see Proposition~\ref{th:many-to-few2} and Proposition~\ref{th:k-spine-cv}).
The convergence criterion for mm-spaces given in
Proposition~\ref{lem:convDetermining} then yields the result.

\paragraph{Heuristics on lineage dynamics via moment calculations}
Before turning to the strategy for proving Proposition~\ref{th:k-spine-cv},
we first discuss what the moment calculations indicate about the dynamics of
the population’s ancestral lineages.
First, recall that $h(x)$ denotes the expected number of descendants of a particle initially located at $x$ after a large time, see \eqref{eq:reproductive_value}.		One can check that, for $x \in [0,A]$,
\begin{equation}
	h(x) \approx N^c,
	\label{eq:heuristics_2}
\end{equation}
and that for all $1\ll z \ll L$, $h(z)\ll h(0)$.

Next, we examine the second moment of the population size.
By \cref{th:many-to-few2}, for $k=2$, $\phi\equiv 1$, $f_i\equiv 1$, and $t>0$,
we have
\begin{align}
	\E_x\!\left[{\bf\Phi}(\bar{\mathscr{M}}_t) \right]
	=\E_x\left[\frac{1}{N^2}\sum_{\substack{v_1\neq v_2                     \\\in \mathcal{N}_{tN^{1-c}}}}1\right]
	 & = \frac{2}{N}h(x)\,\,\hat{Q}_x^{2,t}(\Delta)
	=\frac{2}{N}h(x)\,\frac{t}{N^{c}}\,\bar{Q}_x^{2,t}(\Delta),   \nonumber \\
	 & = \frac{2}{N}h(x)\,\frac{1}{N^{c}} \Bigg[\int_{0}^{t} \int_{0}^{L}
		\bar{q}_s(x,y)h(y) \Big(\int_0^L \frac{1}{h(z)}\bar{q}_{t-s}(y,z)\,dz \Big)^2 dy\, ds \Bigg].
	\label{eq:heuristics_1}
\end{align}
Intuitively, the many-to-two formula
allows one to compute second moments in branching processes by following two distinguished lineages.
These lineages coincide until they split at their most recent common ancestor (MRCA), after which they evolve independently.
The term in square brackets in \eqref{eq:heuristics_1} can be interpreted as follows:
we first integrate over the age $s$ of the MRCA, and then over its position $y$.
The last integration corresponds to the trajectories of the two lineages
after they split, between times $s$ and $t$. (Note that this can be generalised
to $k$-th moments with the many-to-few formula, see Theorem~\ref{many-to-few00}.)
{As we shall see in Section~\ref{sec:hk}, the relaxation time of the spine process is of order $\log(N)^2$, in the sense that
		\begin{equation}
			\label{eq:mixing_time}
			\quad q_t(x,y)\approx \Pi(y), \quad \forall t \gg \log(N)^2.
		\end{equation}
		Since in \eqref{eq:heuristics_1}, the spine kernel is accelerated by the factor $N^{1-c}$, this suggests that, for $k=2$, $\phi\equiv 1$ and $t>0$,
		\begin{align}
			\E_x\!\left[{\bf\Phi}(\bar{\mathscr{M}}_t) \right]
			 & \approx \frac{2t}{N}h(x)\cdot\overbrace{\Big(\frac{1}{N^{c}}\int_{0}^L h(y)^2 \tilde h(y)\,dy\Big)}^{\mbox{$\approx \sigma^2$}}\,
			\cdot\underbrace{\Big(\int_0^L \tilde h(z)\,dz \Big)^2}_{\mbox{=1}},
			\label{eq:heuristics_3}
		\end{align}
		where we used the Perron--Frobenius normalisation \eqref{eq:normalisation}, Proposition~
		\ref{prop:concentration}, and the identity $\Pi=h\tilde h$.}

	{As explained above, the second factor in \eqref{eq:heuristics_3} is related to the typical position of the MRCA of two individuals.
		Yet, we know from Proposition~\ref{prop:concentration} that the mass of this integral is concentrated on the segment $[0,A]$, suggesting that the MRCA of two randomly sampled individuals lies in the best class.}
This, combined with \eqref{eq:heuristics_2}, suggests the following picture for the backward dynamics.
Sample $k$ individuals from the population at time $tN^{1-c}$.
Looking backward in time, the ancestral lineages of these individuals trace back to the best class $[0,A]$ within a time of order $\log(N)^2$.
Within this class, all individuals have the same reproductive value (see \eqref{eq:heuristics_2}), so the lineages evolve as in a neutral population of size given by the size of the best class, namely $N^{1-c}$, see Proposition~\ref{prop:concentration}.
This is illustrated in Figure~\ref{fig:fitness_class_coal}.

Intuitively, we expect our model to behave similarly to the following unstructured
(i.e., non-spatial)
model. Consider a Cannings model with population size $N$, where the offspring
vector $\nu^N = (\nu^N_1,\dots,\nu^N_N)$ in each generation is a random
permutation of the vector
\begin{align*}
	v=( \underbrace{N^c,\dots,N^c}_{N^{1-c} \text{ coordinates}},0,\dots,0).
\end{align*}
For simplicity, we assume that all powers are natural
numbers. Möhle's Lemma~\cite{Moehle1998} states that if the ratio between
the triplet coalescence probability $d_N$ and the pair coalescence probability
$c_N$ vanishes, then the genealogy converges to a Kingman coalescent on the
time scale $c_N^{-1}$. Here, we have
\begin{align*}
	\frac{d_N}{c_N}
	=
	\frac{\EE{\sum_{i=1}^{N}
			\frac{\nu^N_i (\nu^N_i-1) (\nu^N_i -2)}{N (N-1)(N-2)} }}
	{\EE{\sum_{i=1}^{N}
			\frac{\nu^N_i (\nu^N_i-1)}{N (N-1)} }}
	\sim
	\frac{\EE{(\nu^N_1)^3}}{N \EE{(\nu^N_1)^2}}
	\sim N^{c-1}.
\end{align*}
Moreover, $c_N \sim N^{c-1}$, suggesting that Möhle's condition is satisfied
and that the genealogy is given by a Kingman coalescent on the time scale
$c_N^{-1} \sim N^{1-c}$.

\begin{figure}[!b]
	\centering
	\begin{minipage}{0.45 \textwidth}
		\centering

		\includegraphics{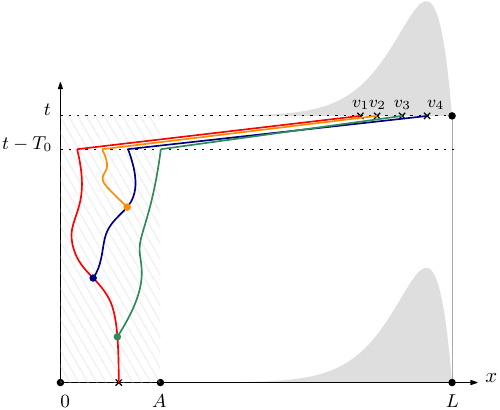}

	\end{minipage}
	\begin{minipage}{0.45 \textwidth}
		\centering

		\includegraphics[width=1\textwidth,trim = 0 99 0 50,clip]{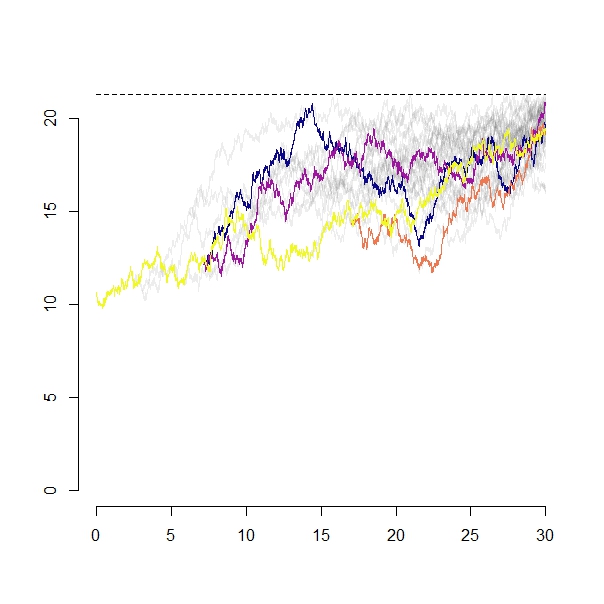}

	\end{minipage}
	\caption{Left panel: Spatial trajectories of the ancestral lineages of $4$ individuals sampled
		at time $tN^{1-c}$.
		All lineages trace back to the best class $[0,A]$ within $T_0 = O(\log(N)^2)$.
		Right panel: Simulation of the BBM for $\beta = 0.99$.
		The dashed line indicates the absorbing boundary at $L \approx 21.27$.
		Coloured trajectories correspond to the ancestral lineages of $4$ particles sampled
		uniformly at the end of the simulation ($t=20$), while the grey trajectories in
		the background show $5\%$ of the surviving lineages.
	}
	\label{fig:fitness_class_coal}
\end{figure}

\paragraph{Convergence of moments}
We now give  brief heuristics to explain why Proposition~\ref{th:k-spine-cv}
should hold. By definition of ${\Delta}$,
\begin{equation*}
	\hat Q_{x}^{k,t}\left(  {\bar\Delta }\cdot \phi( (U_{\sigma(i),\sigma(j)}))    \right)=
	\hat Q_{x}^{k,t}\left( \left(  \prod_{v\in \mathcal B}{h(\bar {\zeta}^N_{v})}
	\prod_{ i=1}^k \frac{1}{h(\bar {\zeta}_{V_{\sigma(i)}})} \right)
	\cdot \phi(( U_{\sigma(i),\sigma(j)}) )
	\right).
\end{equation*}
By definition of the accelerated $k$-spine tree, its structure is binary a.s.~and
the marks along the branches are given by spine processes accelerated by $N^{1-c}$.
On the other hand,  we know from \eqref{eq:mixing_time} that the mixing time
of the spine process is much smaller than $N^{1-c}$.
As a consequence, the variables $(\bar \zeta_u)_{u\in\mathcal{B}\cup\mathcal{L}}$ should be
well-approximated by $2k-1$ i.i.d.~random variables with law $\Pi$.
Hence, by the same arguments used to obtain \eqref{eq:heuristics_3}, we see that
\begin{align}
	\hat Q_{x}^{k,t}\left(   \Delta \cdot \phi( (U_{\sigma(i),\sigma(j)}))    \right)
	 & \approx \frac{t^{k-1}}{N^{(k-1)c}}	\left(\frac{1}{2}{\int_0^\lcn (h(y))^2\tilde{h}(y)dy}\right)^{k-1}
	\left( \int_0^{\lcn}\tilde{h}(y)dy\right)^k{\mathbb E}\left[ \phi( (U_{\sigma(i),\sigma(j)}) )\right]
	\nonumber ,                                                                                                   \\
	 & \approx\left(\frac{\sigma^2}{2} t\right)^{k-1} {\mathbb E}\left[ \phi(( U_{\sigma(i),\sigma(j)})) \right].
	\label{eq:mixing}
\end{align}
This yields the content of
Proposition~\ref{th:k-spine-cv}.
Once this result is proved, Theorem~\ref{th:many-to-few2} then shows that,
for all polynomials $\Phi$, as $N\to\infty$,
\begin{equation*}
	\E_x[\Phi(\bar{M}_t)]\approx
	\frac{1}{N} k! \left(\frac{\sigma^2}{2} t\right)^{k-1}h(x)
	{\mathbb E}\left[ \phi( (U_{\sigma(i),\sigma(j)})) \right].
\end{equation*}
We then see from Theorem~\ref{th:Kolmogorov} that
\begin{equation*}
	\E_x[\Phi(\bar{M}_t)| Z_{tN^{1-c}}>0]\approx k!
	\left(\frac{\sigma^2}{2} t\right)^{k}
	{\mathbb E}\left[ \phi(( U_{\sigma(i),\sigma(j)}) )\right].
\end{equation*}
Proposition~\ref{lem:convDetermining} then implies that $\bar{M}_t$ converges to a
Brownian CPP of depth $\frac{\sigma^2}{2} t$.

\paragraph{The Kolmogorov estimate}

Our proof of the Kolmogorov estimate (Theorem~\ref{th:Kolmogorov}) also requires sharp
bounds on the moments of the population size, obtained via a many-to-few formula.
However, the many-to-one formula used here is not the one stated in Theorem~\ref{many-to-few00}. Instead, it relies on a different construction of the spine process, going back
to the original work of Lyons, Pemantle and Peres~\cite{lyons1995conceptual},
and later developed by Harris and Roberts~\cite{harris2017spine}.

Although this change of strategy may appear to reveal a limitation of the method of moments, we argue that both approaches are natural and complementary. Indeed, it is not surprising
that the Kolmogorov estimate requires finer control, as this result is strongly model-dependent.
By contrast, the convergence of the unconditioned moments is more robust and relies
only on two spectral properties of the process: fast mixing guarantied by the control on the spectral gap, and suitable estimates on the Green's function.

Moreover, the method of moments
directly captures the limiting genealogical structure of the population and
points toward a natural sampling procedure, which is not the case for the spine
construction used in the proof of the Kolmogorov estimate.

\paragraph{General method for branching diffusion with an effective population size}
We conjecture that the method outlined above can be extended to a broad class of
spatial branching processes, which we will refer to as branching diffusions with an effective population size. Essentially, we expect the method to apply to any branching diffusion
satisfying the following conditions:
(1) The principal eigenvalue of the generator of the branching diffusion is zero, and the associated left and right eigenfunctions $h$ and $\tilde{h}$ can be normalized as in \eqref{eq:normalisation}.
Moreover, the reproductive variance $\Sigma^2$ (see \eqref{eq:def_Sigma}) scales as $N^c$ for some
$c \in (0,1)$.
(2) The moments of the $k$-spine tree associated to the diffusion converge to the
moments of a Brownian CPP after rescaling time by $N^{1-c}$ and the size of the
process by $N$.
(3)
The probability that a system started from a single particle at $x$ survives up
to time $t N^{1-c}$ is proportional to $\frac{h(x)}{N}$.

As we shall see in the proofs, all these conditions should arise from spectral
properties of the process. The second condition follows from the fast mixing of
the spine process combined with appropriate estimates on the Green’s function
(see Section~\ref{sec:cv:mom}). The third condition reflects the fact that the
first and second moments are dominated by the $O(N^{1-c})$ particles located near $0$ (see~Proposition~\ref{prop:concentration2} and~\cref{lem:uba} in \cref{sec:Kolmogorov}).
We leave the formalization of this conjecture for future work.
\paragraph{The case of fitness waves}
Finally, we  outline how the proof extends from the case of a BBM starting from a single particle to a front-like initial configuration (see Remark~\ref{rk:Feller}).
The convergence of the demographic fluctuations for the BBM started from $N$ particles follows from the Kolmogorov estimate (Theorem~\ref{th:Kolmogorov}) and the convergence of the moment associated to $\phi\equiv 1$, provided that these convergence results are uniform in the starting point $x$ of the BBM. The above calculations (applied to $\phi\equiv 1$) indicate that, for the BBM started from a single particle at $x>0$, the process $\bar{Z}_t=\frac{1}{N}Z_{tN^{1-c}}$ is well-approximated by a random variable with law
\begin{equation*}
	\left( 1-\frac{2}{\sigma^2t}\frac{h(x)}{N}\right)\delta_0(dx)+\frac{2}{\sigma^2t}\frac{h(x)}{N}\exp\left(-\frac{2x}{\sigma^2t}\right)\frac{2dx}{\sigma^2t}.
\end{equation*}
Hence, for $q\geq 0$
\begin{equation*}
	\mathbb{E}_x\left[e^{-q\bar{Z}_t}\right]\approx  \left( 1-\frac{2}{\sigma^2t}\frac{h(x)}{N}\right) + \frac{2}{\sigma^2t}\frac{h(x)}{N}\frac{\sigma^2t}{\sigma^2t+2q}=  1-\frac{2}{\sigma^2t}\frac{h(x)}{N} \frac{\frac{\sigma^2 t}{2}q}{1+\frac{\sigma^2 t}{2}q}\approx \exp\left(-\frac{h(x)}{N}\frac{q}{1+\frac{\sigma^2t}{2}}\right),
\end{equation*}
for $N$ large.
If we now assume that the sequence of initial configurations $\mathcal{N}_0$ are such that $\frac{1}{N}\sum_{v\in\mathcal{N}_0} h(x_v(0))$ converges to some $z_0>0$ in probability, the branching property shows that
\begin{equation*}
	\mathbb{E}\left[e^{-q\bar{Z}_t}\right] \approx \E_x\left[\prod_{v\in\mathcal{N}_0}\exp\left(-\frac{h(x_v(0))}{N}\frac{q}{1+\frac{\sigma^2t}{2}}\right)\right]\approx \exp\left(-z_0\frac{q}{1+\frac{\sigma^2t}{2}}\right),
\end{equation*}
which coincides with the Laplace transform of a Feller diffusion.

\section{The many-to-few formula}
\label{sec:proof_many_to_f}

The proof of Theorem \ref{many-to-few00}
relies on the branching property. This
property allows us to derive a recursion formula that holds both for the moments
of the branching diffusion and the law of the $k$-spine. The proof of the many-to-few formula
for branching diffusions killed
at the boundary of a domain $\Omega\subset \R^d$
can be found in \cite[Section 3.1]{tourniaire2023tree}.
The proof of~Theorem \ref{many-to-few00} follows essentially the same lines; however,
we outline the main steps, as some intermediate results will be needed later.

The first step relies on
a uniform planarisation of the BBM.
At every time $t>0$, every particle is endowed with a mark $x_v$
(the position of the particle) and a Ulam-Harris label $p_v$,
where $p_v\in \cup_{n\in \mathbb{N}} \{0,1\}^n$. As before, ${x_v}$ denotes the position of the particle. The planarisation labels $p_v$
are assigned recursively as follows. We label the root with $\emptyset$ and
\begin{enumerate}
	\item At every branching point $v$, we distribute the
	      labels $(p_v,0)$ and $(p_v,1)$ uniformly among the two children: $(p_v,0)$ (resp.~$(p_v,1)$)
	      is said to the left (resp.~right) child of $v$.
	\item The label  $p_v$ does not vary between two branching points, i.e., $p_{v_1}=p_{v_2}$
	      if the trajectory connecting $v_1$ and $v_2$ does not encounter any branching points.
\end{enumerate}
Let ${\cal N}_t^{pl}$  be the set of particles at time $t$ in the planar branching diffusion. The genealogical distances and the marks of the planar branching diffusion will be encoded by marked binary planar ultrametric matrices.
We say that a matrix $(U_{i,j})_{1\leq i,j\leq k}$ is \textit{planar ultrametric} if
\begin{equation*}
	\forall i<j<l, \quad U_{i,l}=U_{i,j}\vee U_{j,l}.
\end{equation*}
Moreover, the matrix $(U_{i,j})_{1\leq i,j\leq k}$ is said to be \textit{binary} if
\begin{equation*}
	\forall i<j<l, \quad |\{U_{i,j},U_{i,l},U_{j,l}\}|\geq 2.
\end{equation*}
We denote by $\mathbb U_k$ the set of binary  planar ultrametric matrices of size $k$. Let $\mathbb{U}_k^*=\mathbb{U}_k\times E^k$ be  the set of marked binary planar distance matrices.
Note that the Ulam-Harris $(p_v)$ labelling induces an order on ${\cal N}_t^{pl}$. In particular, for  every $k$-uplet $v_1<v_2\cdots< v_k$ in ${\cal N}_t^{pl}$ and   $\vec{v}=(v_1,...,v_k)$, the marked distance matrix of the sample $\vec{v}$,
\begin{equation*}
	U(\vec{v}):=\left((d_t(v_i,v_j),(x_{v_i}))\right),
\end{equation*}
is an element of $\mathbb{U}_k^*$.

Our recursion formula on the moments of $M_t$ is obtained by dividing every ordered
$k$-uplet in ${\cal N}_t^{pl}$ into two subfamilies,  the descendants of the left
(resp.~right) child of the Most Recent Common Ancestor (MRCA) of the sample. This
is by achieved by partitioning  $[k]$  as follows.
For  ${U}=((U_{i,j}),(x_i))\in\mathbb{U}_k^*$,
define
\begin{equation}
	\tau(U) =\max_{i\neq j} U_{i,j}.
\end{equation}
In words, $\tau(U)$ is the time to the MRCA of the sample.
We say  the integers $i$ and $j$ are in the same block iff $U_{i,j}<\tau$. Since $U$ is a binary planar ultrametric matrix, there exists $n\leq k-1$ such that this partition can be written as $\{\{1,...,n\},\{n+1,...,k\}\}$.
We  denote by $T_0(U)$ and $T_1(U)$ the corresponding sub-matrices obtained from this two-block partition and write $|T_0(U)|$ and $|T_1(U)|$ for the sizes of the two blocks.
Note that $U_{i,j}$ is equal to $\tau(U)$ if $i$ and $j$ do not belong to the same block, and to $(T_0)_{i,j}$ (resp.~$(T_1)_{i-|T_0|,j-|T_0|}$) if they both belong to the first (resp.~second) block.

The next step consists in deriving a recursion formula for functionals $F:\mathbb{U}_k^*\to\mathbb{R}$ of the product form
\begin{equation}
	\label{eq:prod_form}
	F(U) \ = \ \mathbf{1}_{\left\{|T_0|=k-n,|T_1|=n\right\}}f(\tau(U)) \psi_0(T_0(U)) \psi_1(T_1(U)),
\end{equation}
where   $n\in\{1,...,k-1\}$ and $f:\mathbb{R^+}\to \mathbb{R}$, $\psi_1:\mathbb U_n^*\to \mathbb{R}$, $\psi_0:\mathbb U_{k-n}^*\to \mathbb{R}$  are bounded measurable functions.

\begin{proposition}[Recursion formula for the planar moments]\label{lem:23455}
	Let $k\in\mathbb{N}$, $t>0$ and $x>0$.  Let $R^{k,t}_x$  be the measure on $\mathbb{U}_k^*$ such that for every bounded measurable function $F:\mathbb{U}_k^*\to\mathbb{R}$,
	$$
		R^{k,t}_x(F) \ := \frac{1}{h(x)} \mathbb{E}_{x}\left(\sum_{\substack{v_1 < \cdots < v_k\\ v_i \in {\cal N}_t^{pl}}} F( U(\vec{v}))     \right),
	$$
	where $\vec{v}=(v_1,...,v_k)$. Then, for all functional $F$ of the product form \eqref{eq:prod_form},
	$$
		R^{k,t}_x(F) =  \frac{1}{2}\int_0^t f(t-s)\int_0^L h(y)q_{t-s}(x,y)
		R_y^{n,s}(\psi_0)R_y^{k-n,s}(\psi_1)dy ds.
	$$
\end{proposition}
\begin{proof}
	The proof of this result is contained in the proof of Proposition 7 in~\cite{tourniaire2023tree}
\end{proof}
The next step in the proof of Theorem~\ref{many-to-few00} is to show that
$(t^{k-1}Q^{k,t}_x)$ satisfies a similar recursive relation. We refer to \cite[Section 3.1]{tourniaire2023tree} for a proof of this fact.
\begin{proposition}[Recursion formula for the spine measure]
	\label{prop:rec_spine}

	Let $t>0$ and $x\in \Dcn$.
	Recall the definition of $\Delta$ from Theorem~\ref{many-to-few00}.
	For every functional $F$ of the product form \eqref{eq:prod_form},
	\begin{equation*}
		t^{k-1}Q_x^{k,t}(\Delta F)=\frac{1}{2}
		\int_0^t f(t-s)\int_0^L h(y)q_{t-s}(x,y)(s^{n-1}Q_y^{n,s}(\Delta \psi_0))
		(s^{k-n-1}Q_y^{k-n,s}(\Delta \psi_1))dy ds,
	\end{equation*}
	where, by a slight abuse of notation, we write
	\[Q_x^{k,t}(\Delta F)=Q_x^{k,t}(\Delta F((U_{i,j}),(\zeta_{V_i}))),\]
	and $(U_{i,j})$ is as in \eqref{eq:UPP}. For the family of rescaled measures
	$\hat Q$, see Definition~\ref{def:acc:spine}, this reads
	\begin{equation*}
		\hat Q_x^{k,t}(\Delta F)=\frac{1}{2N^c}
		\int_0^t f(t-s)\int_0^L h(y)\bar{q}_{t-s}(x,y)\hat Q_y^{n,s}(\Delta \psi_0)
		\hat Q_y^{k-n,s}(\Delta \psi_1)dy ds.
	\end{equation*}
\end{proposition}
\begin{cor}[Many-to-few formula for the planarised BBM]
	Let $k\in\mathbb{N}$, $t>0$ and $x>0$. For every bounded measurable function
	$F:\mathbb{U}_k^*\to \R$,
	\[R_x^{k,t}(F)=t^{k-1}Q_x^{k,t}(\Delta F).
	\]
\end{cor}
\begin{proof}
	This result follows from Proposition~\ref{lem:23455} and
	Proposition~\ref{prop:rec_spine} together with the fact that
	the set of functionals of the product form is separating for $\mathbb{U}_k^*$.
\end{proof}
\begin{proof}[Proof of Theorem~\ref{many-to-few00}] The result stems from the above
	corollary by permuting the entries of the marked distance matrix, i.e.~by considering
	functionals $F$ of the form
	\[\forall U=((U_{i,j}),(x_i))\in \mathbb{U}_k^*, \quad F(U) =   \frac{1}{k!}
		\sum_{\sigma \in S_k} \phi((U_{\sigma(i),\sigma(j)}))
		\prod_{i} f_{i}(x_{{\sigma(i)}}).\]
\end{proof}

\section{Heat kernel estimates} \label{sec:hk}
In this section, we derive uniform estimates for the particle density $p_t^\beta\equiv p_t$ defined in Section~\ref{sec:model} and the associated Green's function. For simplicity, we consider similar conventions to that used for the BBMs $\mathbf{X}^N$ and drop the subscripts $\beta$.
All bounds are stated uniformly in the drift parameter $\beta \in (0,1)$, with $L = L_\beta$ chosen as in \eqref{eq:def_L}.

We refer the reader to \cite[Lemma~2.1]{powell19} for similar estimated in the case of general diffusions in
bounded domains of $\mathbb{R}^d$ and \cite[Lemma 5]{berestycki13} for the case
where $0$ and $L$ are both absorbing.

\subsection{Spectral theory and mixing time}\label{sec:spectral_theory}
First, we recall classical results from Sturm--Liouville theory following
\cite[Section~4.6]{zettl10}. Consider the Sturm--Liouville problem
\begin{equation}\tag{SLP}
	\frac{1}{2}v''(x)=\lambda v(x), \quad x\in(0,L), \quad v'(0)=\beta v(0), \quad v(L)=0.
\end{equation}
\begin{itemize}
	\item[(1)] A solution of \eqref{SLP} is defined as a function $v:[0,L]\to \mathbb{R}$ such that $v$ and $v'$ are absolutely continuous on $[0,L]$ and satisfies \eqref{SLP} a.e.~on $(0,L)$. In particular, any solution $v$ is continuously differentiable on $[0,L]$. In our particular case, the solutions are also twice differentiable on $[0,L]$ and satisfy (\ref{SLP}) for all $x\in(0,L)$.
	\item[(2)] A complex number $\lambda$ is an eigenvalue of the Sturm--Liouville problem \eqref{SLP}  if  \eqref{SLP} has a solution $v$ which is not identically zero on $[0,L]$. This set of eigenvalues will be referred to as the spectrum.
	\item[(3)] It is known  that the set of eigenvalues is infinite, countable and has no finite accumulation point. Besides, it is upper bounded and all the eigenvalues are simple and real so that they can be enumerated
	      \begin{equation*}
		      \lambda_1>\lambda_2>...> \lambda_k>... \, ,
	      \end{equation*}
	      where
	      \begin{equation*}
		      \lambda_k\rightarrow -\infty \quad \textnormal{ as } \quad  k\rightarrow+\infty.
	      \end{equation*}
	\item[(4)] As a consequence, the eigenvector $v_i$ associated to $\lambda_i$ is unique up to constant multiplies. Furthermore, the sequence of eigenfunctions can be normalised to form an orthonormal sequence of $\mathrm{L}^2([0,L])$. This orthonormal sequence is complete in $\mathrm{L}^2([0,L])$ so that the fundamental solution of PDE \eqref{PDE:B} can be written as
	      \begin{equation}
		      g_t(x,y)=\sum_{k=1}^\infty e^{\lambda_k t}\frac{v_k(x)v_k(y)}{\|v_k\|^2}.\label{def:qt1}
	      \end{equation}
	\item[(5)] The function $v_1$ does not change sign in $(0,L)$.
		      {For $k\geq 2$, the eigenfunction $v_k$ has exactly $k-1$ zeros on $(0,L)$.}
\end{itemize}
Finally, we recall from \eqref{def:q} that the density of the BBM $\mathbf{X}^\beta$ is related to $g_t$ via the relation
\begin{equation}\label{eq_pg}
	p(x,y)=
	e^{\frac{\gamma^2}{2}t}e^{\beta (y-x)}g_t(x,y), \quad \forall \, x,y \in [0,L],
\end{equation}
where $\gamma$ is as in \eqref{eq:def_gamma}.
\begin{lemma}\label{lem:decomp_spect}
	For all $t>0$, and all $x,y\in [0,L]$,
	\begin{equation*}
		p_t(x,y)=e^{\frac{1-\beta^2}{2}t}e^{\beta(y-x)}\sum_{k\geq1}
		e^{-\frac{\gamma_k^2}{2}t}\frac{v_k(x)v_k(y)}{\|v_k\|^2},
	\end{equation*}
	where $\gamma_1=\gamma =\sqrt{1-\beta^2}$ and, for all $k\in\mathbb{N}$,  $\gamma_k$ is the unique solution of
	\begin{equation*}
		\tan(\gamma_k L)=-\frac{\gamma_k}{\beta}, \quad \text{such that} \quad \gamma_kL\in\left[\left(k-\frac{1}{2}\right)\pi,k\pi\right],
	\end{equation*}
	and
	\begin{equation*}
		v_k(x)=\sin(\gamma_k(L-x)), \; x\in[0,L(\beta)], \quad \|v_k\|^2=\frac{1}{2}\left(L+\frac{1}{\beta}\cos(\gamma_kL)^2\right).
	\end{equation*}
	Moreover, we have
	\begin{equation}
		\sin(\gamma L)=\gamma, \qquad \cos(\gamma L)=-\beta, \qquad  \|v_1\|^2=\frac{L+\beta}{2}. \label{eq:sin/cos}
	\end{equation}
\end{lemma}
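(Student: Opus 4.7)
The plan is to combine the general spectral decomposition \eqref{def:qt1} of $g_t^\beta$ with the transformation \eqref{eq_pg} relating $p_t^\beta$ to $g_t^\beta$, after explicitly solving \eqref{SLP} for this particular choice of $L=\lc$. The substitution \eqref{eq_pg} immediately gives the prefactor $e^{\frac{1-\beta^2}{2}t}e^{\beta(y-x)}$, so the work reduces to identifying the eigenvalues $\lambda_k$, the eigenfunctions $v_k$ and their $L^2$-norms.

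First I would solve \eqref{SLP} by hand. Since $\lambda_1<0$ and all subsequent eigenvalues are smaller, write $\lambda_k=-\gamma_k^2/2$ with $\gamma_k>0$; the general solution of $v''=-\gamma_k^2 v$ vanishing at $L$ is, up to scalar, $v_k(x)=\sin(\gamma_k(\lc-x))$. The boundary condition $v_k'(0)=\beta v_k(0)$ then reads $-\gamma_k\cos(\gamma_k\lc)=\beta\sin(\gamma_k\lc)$, which is exactly $\tan(\gamma_k\lc)=-\gamma_k/\beta$. Since $\tan$ is continuous and increasing from $-\infty$ to $0$ on each interval $((k-\tfrac{1}{2})\pi,k\pi)$ while $-\gamma_k/\beta$ is negative, the root $\gamma_k\lc$ lies in this interval (one root per branch by monotonicity), giving the localisation claimed.

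Next, the norms: using $\sin^2 u=(1-\cos 2u)/2$ and the change of variable $u=\lc-x$,
\begin{equation*}
\|v_k\|^2=\int_0^\lc \sin^2(\gamma_k(\lc-x))\,dx
=\frac{\lc}{2}-\frac{1}{2}\cdot\frac{\sin(2\gamma_k\lc)}{2\gamma_k}
=\frac{\lc}{2}-\frac{\sin(\gamma_k\lc)\cos(\gamma_k\lc)}{2\gamma_k}.
\end{equation*}
Using the characteristic equation $\sin(\gamma_k\lc)=-(\gamma_k/\beta)\cos(\gamma_k\lc)$ to eliminate $\sin(\gamma_k\lc)$ yields $\|v_k\|^2=\frac{1}{2}\bigl(\lc+\cos^2(\gamma_k\lc)/\beta\bigr)$, as claimed.

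Finally I would handle the specific values at $k=1$. From \eqref{eq:def_L} we have $\gamma\lc=\arctan(-\gamma/\beta)+\pi$, which lies in $(\pi/2,\pi]$ and satisfies $\tan(\gamma\lc)=-\gamma/\beta$, confirming $\gamma_1=\gamma$. Writing $\gamma\lc=\theta+\pi$ with $\theta=\arctan(-\gamma/\beta)\in(-\pi/2,0)$, one computes $\cos(\gamma\lc)=-\cos\theta=-\beta/\sqrt{\beta^2+\gamma^2}=-\beta$ and hence $\sin(\gamma\lc)=+\sqrt{1-\beta^2}=\gamma$ (positive by the quadrant). Plugging $\cos^2(\gamma\lc)=\beta^2$ into the general norm formula gives $\|v_1\|^2=(\lc+\beta)/2$. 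Substituting the expressions for $\lambda_k$, $v_k$ and $\|v_k\|^2$ into \eqref{def:qt1} and then into \eqref{eq_pg} yields the stated spectral expansion of $p_t^\beta$.

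I do not anticipate a genuine obstacle: the proof is an explicit computation. The only mildly delicate point is the verification of the quadrant for $\gamma\lc$ (and more generally for $\gamma_k\lc$), which relies on the precise definition \eqref{eq:def_L} of $\lc$; everything else is a direct trigonometric manipulation combined with the general Sturm--Liouville facts recalled at the start of the section.
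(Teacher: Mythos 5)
Your proposal is correct and follows exactly the route the paper intends: the paper's proof simply invokes the general Sturm--Liouville facts (1)--(5) together with \eqref{eq_pg} and "leaves the calculation to the reader," and your write-up supplies precisely that calculation (explicit eigenfunctions, characteristic equation, norm computation, and the quadrant check giving $\sin(\gamma\lc)=\gamma$, $\cos(\gamma\lc)=-\beta$). All the trigonometric manipulations check out, so this is a correct and more detailed version of the same argument.
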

\begin{proof}
	The formula for $p_t$ follows directly from points (1) to (6) and from a straightforward calculation.
	The last part of the result follows from the observation that
	\begin{equation*}
		\tan(\gamma L)=-\frac{\gamma}{\beta}, \quad \text{and}\quad  \beta^2+\gamma^2=1.
	\end{equation*}
\end{proof}

\begin{lemma}
	\label{lem:first_eigenv}
	There exists $\beta_0\in(1/2,1)$ such that, for all $\beta> \beta_0$, we have
	\begin{equation*}
		\begin{cases}
			\gamma \left( \frac{1}{2\pi}x+1\right)
			\leq v_1(x)\leq \gamma (\beta x+1), & 0\leq x
			\leq L-\frac{\pi}{2\gamma}                                                \\
			\frac{2\gamma}{\pi}(L-x)\leq v_1(x) \leq \gamma (L-x),
			                                    & L-\frac{\pi}{2\gamma}\leq x \leq L,
		\end{cases}
	\end{equation*}
	For $k\geq 1$, we have
	\begin{equation*}
		|v_k(x)|\leq
		\begin{cases}
			\gamma_k(x+ 2) & 0\leq x
			\leq L-\frac{\pi}{2\gamma}                          \\
			\gamma_k (L-x) & L-\frac{\pi}{2\gamma}\leq x\leq L.
		\end{cases}
	\end{equation*}

\end{lemma}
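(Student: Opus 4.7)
The plan is to treat the four bounds by elementary calculus arguments on $v_1(x) = \sin(\gamma(\lc-x))$, and by a sum-angle expansion for the $k \geq 2$ bound. All computations will hinge on the two identities $\sin(\gamma\lc) = \gamma$, $\cos(\gamma\lc) = -\beta$ from~\eqref{eq:sin/cos}, together with the characterisation $\tan(\gamma_k\lc) = -\gamma_k/\beta$ from Lemma~\ref{lem:decomp_spect}.

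For the right region $[\lc-\frac{\pi}{2\gamma},\lc]$, the substitution $u := \gamma(\lc-x)$ lands in $[0,\pi/2]$, on which the elementary inequalities $\tfrac{2}{\pi} u \le \sin u \le u$ hold. Undoing the substitution gives both the upper and lower bounds on $v_1$ directly, and the bound $|v_k(x)|\le \gamma_k(\lc-x)$ follows from $|\sin y|\le |y|$.

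The left region $[0,\lc-\frac{\pi}{2\gamma}]$ requires a bit more care. Here $\gamma(\lc-x) \in [\pi/2, \gamma\lc] \subset [\pi/2,\pi]$ by Remark~\ref{rem:bij_L}, so $\sin(\gamma(\lc-x)) \ge 0$ throughout. For the \emph{upper} $v_1$ bound, I would set $f(x) := \gamma(\beta x+1) - \sin(\gamma(\lc-x))$. A direct computation using~\eqref{eq:sin/cos} gives $f(0) = f'(0) = 0$ while $f''(x) = \gamma^2 \sin(\gamma(\lc-x)) \ge 0$ on the interval. Convexity together with the double zero at $0$ then yields $f\ge 0$. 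For the \emph{lower} $v_1$ bound, I would set $g(x) := \sin(\gamma(\lc-x)) - \gamma\bigl(\tfrac{x}{2\pi}+1\bigr)$, so that $g(0) = 0$ and $g''(x) = -\gamma^2\sin(\gamma(\lc-x)) \le 0$, i.e.\ $g$ is concave. Evaluating at the right endpoint gives $g(\lc - \pi/(2\gamma)) = \tfrac{5}{4} - \tfrac{\gamma\lc}{2\pi} - \gamma \ge \tfrac{3}{4} - \gamma$ (using $\gamma\lc \le \pi$), which is positive once $\gamma < 3/4$; this is an ``$L$ large enough'' condition by Remark~\ref{rem:bij_L}. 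Nonnegativity at the two endpoints combined with concavity gives $g \ge 0$ throughout.

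Finally, for the general $k$ bound on $[0,\lc-\frac{\pi}{2\gamma}]$, the sum-angle identity
\[
v_k(x) = \sin(\gamma_k\lc)\cos(\gamma_k x) - \cos(\gamma_k\lc)\sin(\gamma_k x)
\]
combined with $|\cos|\le 1$ and $|\sin(\gamma_k x)|\le \gamma_k x$ gives $|v_k(x)|\le |\sin(\gamma_k\lc)| + \gamma_k x$. The characterisation $\tan(\gamma_k\lc) = -\gamma_k/\beta$ together with $\sin^2+\cos^2=1$ yields $|\sin(\gamma_k\lc)| = \gamma_k/\sqrt{\beta^2+\gamma_k^2} \le \gamma_k/\beta$, so $|v_k(x)| \le \gamma_k(x+1/\beta) \le \gamma_k(x+2)$ as soon as $\beta \ge 1/2$, i.e.\ for $L_0$ sufficiently large. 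No step is particularly delicate; the main bookkeeping is ensuring that the finitely many ``$L$ large enough'' conditions ($\gamma\lc \le \pi$, $\gamma < 3/4$, $\beta \ge 1/2$) can be enforced simultaneously by a single choice of $L_0$, which is possible since $\gamma \to 0$ and $\beta \to 1$ as $L \to \infty$ by Remark~\ref{rem:bij_L}.
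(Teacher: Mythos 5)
Your proof is correct and follows essentially the same route as the paper's: concavity of $v_1$ on $[0,\lc-\tfrac{\pi}{2\gamma}]$ combined with the identities $\sin(\gamma\lc)=\gamma$, $\cos(\gamma\lc)=-\beta$, the elementary bounds $\tfrac{2}{\pi}u\le\sin u\le u$ on $[0,\pi/2]$ for the right region, and $|\sin(\gamma_k\lc)|\le\gamma_k/\beta$ together with $\beta\ge 1/2$ for the general $k$ case. Your angle-addition argument for $v_k$ is a slightly safer variant of the paper's linear bound (since $v_k$ oscillates for $k\ge 2$ and a tangent/convexity argument does not directly apply), but the substance is the same.
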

\begin{proof} Using a convexity argument combined with \eqref{eq:sin/cos},
	one can show that
	\begin{equation*}
		\begin{cases}
			\gamma \left( \frac{2(1-\gamma)}{2\gamma L-\pi}x+1\right)
			\leq v_1(x)\leq \gamma (\beta x+1), & 0\leq x
			\leq L-\frac{\pi}{2\gamma}                                                \\
			\frac{2\gamma}{\pi}(L-x)\leq v_1(x) \leq \gamma (L-x),
			                                    & L-\frac{\pi}{2\gamma}\leq x \leq L.
		\end{cases}
	\end{equation*}
	Then, Remark~\ref{rem:bij_L} shows that, for $L$ large enough,
	$2\gamma L-\pi\leq 2\pi$ and $2(1-\gamma)\geq 1$.
	Similarly, for $k\geq 1$, we see that
	\begin{equation*}
		|v_k(x)|\leq
		\begin{cases}
			\beta|\sin(\gamma_kL)|x+\sin(\gamma_k L) & 0\leq x
			\leq L-\frac{\pi}{2\gamma}                                                    \\
			\gamma_k (L-x)                           & L-\frac{\pi}{2\gamma}\leq x\leq L.
		\end{cases}
	\end{equation*}
	We then recall from Lemma~\ref{lem:decomp_spect} that
	\begin{equation}
		|\sin(\gamma_k L)|= \frac{\gamma_k}{\beta}|\cos(\gamma_k L)|\le
		\frac{\gamma_k}{\beta},
	\end{equation}
	and use that $\beta_0>1/2$.
\end{proof}
\begin{cor}\label{cor:est_v1}

	There exists positive constants $c,c'>0$ such that, for all $\beta\in (\beta_0,1)$ and all $x\in (0,L)$, we have
	\[
		\gamma^2(L-x)\leq c v_1(x) \quad \text{and}
		\quad
		\gamma^2(\beta x+1)(1\wedge (L-x))\leq c' v_1(x).
	\]
\end{cor}

\begin{lemma}\label{lem:hk}
	For all $\beta\in (\beta_0,1)$, all $t>0$ and all $x,y\in [0,L]$,
	\begin{equation*}
		|p_t(x,y)-h(x)\tilde{h}(y)|\leq
		4e^{-\frac{\pi^2}{L^2}t}\left(\sum_{k\geq 2}k^2
		e^{-\frac{(k^2-4)\pi^2}{L^2}t}
		\right)h(x)\tilde{h}(y).
	\end{equation*}
	In particular, for all $\vep>0$, there exists $c_{\ref{lem:hk}}=c_{\ref{lem:hk}}(\vep)>0$
	such that
	\begin{equation*}
		\forall\; \beta\in (\beta_0,1), \quad   t>c_{\ref{lem:hk}} {L}^2, \qquad \quad \forall \;
		x,y\in[0,L],\quad |p_t(x,y)-h(x)\tilde{h}(y)|\leq
		\vep h(x)\tilde{h}(y),
	\end{equation*}
	or equivalently that,
	\begin{equation*}
		\forall\; \beta\in (\beta_0,1), \quad   t>c_{\ref{lem:hk}} {L}^2, \qquad \quad \forall \;
		x,y\in[0,L],\quad |q_t(x,y)-h(y)\tilde{h}(y)|\leq
		\vep h(y)\tilde{h}(y),
	\end{equation*}
\end{lemma}

\begin{proof}
	First, note that $p_t$ can be written as
	\begin{equation*}
		p_t(x,y)=h(x)\tilde h(y)+e^{\beta(y-x)}\sum_{k\geq 2}
		e^{\frac{1}{2}(\gamma_1^2-\gamma_k^2)t}\frac{v_k(x)v_k(y)}{\|v_k\|^2}.
	\end{equation*}
	Then, remark that for all $k\geq 2$,
	\begin{equation*}
		\frac{\|v_1\|^2}{\|v_k\|^2}\leq \frac{1}{1+\frac{1}{\beta L}}\leq 1,
		\quad \text{and} \quad \gamma^2-\gamma_k^2\leq -\frac{k^2\pi^2}{2L^2}.
	\end{equation*}
	For $x\in\left[L-\frac{\pi}{2\gamma},L\right]$, we see from
	Lemma~\ref{lem:decomp_spect} and Lemma~\ref{lem:first_eigenv} that
	\begin{equation*}
		\left|\frac{v_k(x)}{v_1(x)}\right|\leq
		\frac{\pi}{2}\frac{\gamma_k}{\gamma}
		\leq k\pi.
	\end{equation*}
	Similarly, for $x\in\left[0,L-\frac{\pi}{2\gamma}\right]$, we get that
	\begin{equation*}
		\left|\frac{v_k(x)}{v_1(x)}\right|
		\leq \left(\frac{\gamma}{\gamma_k}\right)\frac{\frac{1}{2\pi} x+1}{x+2}
		\leq 2k.
	\end{equation*}
	Putting all of this together, we finally get that
	\begin{equation*}
		e^{\beta(y-x)}\sum_{k\geq 2}e^{\frac{1}{2}(\gamma_1^2-\gamma_k^2)t}
		\frac{|v_k(x)v_k(y)|}{\|v_k\|^2}\leq
		\left(4\sum_{k\geq 2}k^2e^{-\frac{k^2\pi^2}{4L^2}t}\right)h(x)\tilde h(y),
	\end{equation*}
	which concludes the proof of the lemma.

\end{proof}

\subsection{The Green's function}
In this section, we derive the Green's function associated to the
kernel $q_t$ to control the small-time fluctuations of the reflected BBM.
Recall from Section~\ref{sec:model} that $g^\beta\equiv g$ from \eqref{def:q} refers to
the unique fundamental solution of the PDE \eqref{PDE:B}.
Let $B_t$ be a reflecting Brownian motion on $[0,L]$ killed elastically at $0$
with killing coefficient $\beta$ and absorbed at $L$. Its transition kernel is $g$ and its
generator $\mathcal A$ is given
by
\begin{eqnarray*}
	&\mathcal{A}f(y)=\frac{1}{2}f''(y), \quad \text{on the domain}
	\quad \mathcal{D}=\{f: \; f,\mathcal{A}f\in \mathcal{C}([0,L]), \;
	f'(0^+)=\beta f(0^+), \; f(L)=0\}.
\end{eqnarray*}
The associated Green's function $G$
is defined as the unique function such that,
for every bounded measurable functions $f$, we have
\begin{equation*}
	\mathbb{E}_x\left[\int_0^\tau f(B_t)dt\right]=\int_0^\infty G(x,y)f(y)dy,
\end{equation*} with
$\tau:=\inf\{t>0:B_t\notin[0,L )]\}$.
In particular, we have
\begin{equation}
	\label{eq:green_def}
	\int_0^\infty g_s(x,y)ds=G(x,y).
\end{equation}
We know from Section~\ref{sec:spectral_theory} that $v_1\geq 0$
and $\mathcal{A} v_1\leq 0$ on $[0,L]$.
This implies (see \cite[Proposition 4.2.3]{pinsky1995positive}) that
the Green's function $G$ is finite (i.e.~the operator $\mathcal{A}$ is
subcritical in the sense of \cite[Section 4.3]{pinsky1995positive}).
The next
result gives an explicit formula for the Green function $G$.
\begin{lemma}[\textnormal{\cite[p.19]{borodin2015handbook}}]
	\label{lem:green} For all $\beta\in (0,1)$, we have
	\begin{equation*}
		G(x,y)=\begin{cases}
			(\beta x+1)  (L -y)/
			(\beta L +1) & 0\leq x\leq y\leq L   \\
			(\beta y+1)(L -x)/(\beta
			L +1)        & 0\leq y\leq x\leq L .
		\end{cases}
	\end{equation*}
\end{lemma}

In the next lemma, we use this explicit formula to control
the variance in the number of particles generated within a time interval of length $c_{\ref{lem:hk}} {L}^2$.

\begin{lemma}
	\label{lem:short_time_variance}
	Let $\vep>0$ and $T_0=c_{\ref{lem:hk}}(\vep) L^2$.
	There exists a positive constant $c$ such that, for all $\beta \in (\beta_0,1)$ and all $x\in(0,L)$, we have
	\begin{equation*}
		\int_0^{T_0}\left(\int_0^L h(y)q_s(x,y) dy \right)ds \leq c
		\gamma^3e^{\beta L}.
	\end{equation*}
\end{lemma}

The proof of this lemma relies on the following observation.
\begin{rem}
	We see from Remark~\ref{rem:bij_L} that for all $\vep>0$, there exists a constant $c_\vep>0$ such that, for all $\beta\in (\beta_0,1)$ and all $t\leq c_{\ref{lem:hk}}(\vep)L^2$,
	\begin{equation}
		\label{eq:ub_growth}
		\exp\left(\frac{1-\beta^2}{2}t\right)
		\leq c_\vep.
	\end{equation}

\end{rem}

\begin{proof}[Proof of Lemma~\ref{lem:short_time_variance}]
	By \eqref{def:q}, \eqref{eq:rel_qp} and \eqref{eq:ub_growth},
	there exists a constant $C>0$ such that, for all $\beta\in (\beta_0,1)$ and all $x,y\in (0,L)$,
	\begin{equation*}
		\int_0^{T_0} q_s(x,y)ds
		\leq C\frac{v_1(y)}{v_1(x)}\int_0^\infty g_s(x,y) ds\leq
		C\frac{v_1(y)}{v_1(x)}G(x,y).
	\end{equation*}
	By Fubini's theorem, we then see that
	\begin{align*}
		v_1(x)\int_0^{T_0}\left(\int_0^L h(y)q_s(x,y) dy \right)ds &
		\leq C\int_0^L h(y)v_1(y)G(x,y)dy.
	\end{align*}
	Define $I_x:=\int_0^L h(y)v_1(y)G(x,y)dy$.
	It follows from Lemma~\ref{lem:green} and \eqref{def:h} that
	\[
		I_x=\frac{2}{L+\beta}\left(
		\frac{L-x}{\beta L+1}
		I_{x,1}
		+\frac{\beta x+1}{\beta L+1}
		I_{x,2}\right),
	\]
	with $I_{x,1}:=\int_0^xe^{\beta (L-y)}v_1(y)^2\gamma(\beta y+1)dy$
	and $I_{x,2}:=\int_x^L e^{\beta (L-y)}v_1(y)^2\gamma(L-y)dy$.
	By a straightforward convexity argument (similar to that used
	in the proof of ~\cref{lem:first_eigenv}), we see that there exists a constant $C$ such that, for all $\beta\in (\beta_0,1)$ and all $x\in (0,L)$,
	\[v_1(x)\leq C\gamma [(\beta x+1)\wedge(L-x)]\]
	This yields
	\[
		I_{x,1}\leq \gamma^3 \int_0^x e^{\beta (L-y)}(\beta y+1)^3dy\leq
		C\gamma^3e^{\beta L},
	\]
	and
	\[
		I_{x,2} \leq \gamma^3 \int_x^L e^{\beta(L-y)}(L-y)^3 dy
		\leq C \gamma^3e^{\beta L}(1\wedge (L-x)).
	\]
	The result then follows from \cref{cor:est_v1} and \eqref{eq:equiv_L}.
\end{proof}

\section{Convergence of moments}
\label{sec:cv:mom}

We now focus on the sequence of BBMs $(\mathbf{X}^N)_{N\geq N_0}$ defined in~\eqref{eq:def_beta}.
Without loss of generality, we assume that $\beta_N > \beta_0$ for all $N \geq N_0$
(see Lemma~\ref{lem:first_eigenv}), so that the estimates established in Section~\ref{sec:hk} remains valid throughout.

This section is devoted to the proof of Proposition~\ref{th:k-spine-cv}.
As outlined in Section~\ref{sec:heuristics}, this convergence result
relies on the fact that, with high probability, the accelerated spine
is close to equilibrium at the branching points of the spine-tree.
Indeed,  Lemma~\ref{lem:hk} shows  that, for any $\vep >0$, for all $N\geq N_0$,
the `distance' between the distribution of the accelerated spine process $\bar{q}_t$
and its stationary distribution $\tilde h h$ is bounded by $\vep \tilde h h$, for
all $t\geq t_0$, where
\begin{equation}
	\label{eq:def_t0}
	t_0:= \frac{1}{N^{1-c}}c_{\ref{lem:hk}}(\vep) \lcn^2.
\end{equation}
As a consequence, \eqref{eq:mixing} provides a good proxy of the $k$-th moment
whenever the branching times in the $k$-spine tree are separated by at least $t_0$, and lie at least $t_0$ away from both the root and the leaves of the tree.

Recall the definition of the $(U_i)$'s in (see~\eqref{eq:UPP}), and define the `good' event
\begin{equation}
	\label{eq:set_A_def}
	\mathcal{A}:=\{ \forall i\neq j, \; |U_i-U_j|>t_0\}\cap
	\{\forall i, \ 	(U_i\wedge (t-U_i))>t_0 \}.
\end{equation}
On $\mathcal{A}$, we say that $k$-spine tree of depth $t$ has no accumulation  of branching times.
To prove that \eqref{eq:mixing} holds in the large-$N$ limit, we show that spine trees
in $\mathcal{A}^c$, i.e.~those with an accumulation of branching points,
have a negligible contribution to the $k$-th moment.

To establish this result, we will rely on the following direct consequence of
Lemma~\ref{lem:short_time_variance}.  As we shall see, this estimate ensures that variance in the rescaled number of particles produced
before the accelerated spine reaches stationarity is small compared to the typical
population size $N$.

\begin{cor}
	\label{cor:var}
	Let $\vep>0$ and let $t_0$ be as defined in \eqref{eq:def_t0}.
	There exists a constant $c_{\ref{cor:var}}\equiv c_{\ref{cor:var}}(\vep)$
	such that,
	for all $N\geq N_0$ and all $x\in (0,L])$,
	\begin{equation}
		\label{eq:Green_q}
		V(x,t_0):=\frac{1}{N^c}
		\int_0^{t_0}\left(\int_0^{L}\bar{q}_{s}(x,y)h(y)dy\right)ds\leq
		c_{\ref{cor:var}}\frac{\log(N)^3}{N^{1-c}}.
	\end{equation}
\end{cor}

In Section~\ref{sec:rb}, we derive rough bounds on the moments, that will be used
as integrable hats in the proof of Proposition~\ref{th:k-spine-cv}. Section~\ref{sec:cvmom}
is dedicated to the proof of Proposition~\ref{th:k-spine-cv}

\subsection{Integrable bounds}
\label{sec:rb}
\begin{lemma}
	\label{lem:rb_mom1}
	Let $\vep\in(0,1]$ and $t_0$ be as in \eqref{eq:def_t0}.
	There exist a positive constant $c_{\ref{lem:rb_mom1}}>0$ and  $N_{\ref{lem:rb_mom1}}\in \N$ such that, for all $N\geq N_{\ref{lem:rb_mom1}}$,
	the following hold: for all $x\in(0,L)$,
	\begin{equation}
		\label{eq:rb_mom1_eq}
		\forall t> t_0, \quad
		\hat{Q}^{1,t}_x(\Delta)\leq c_{\ref{lem:rb_mom1}},
	\end{equation}
	and,
	\begin{equation}
		\label{eq:rb_mom1_neq}
		\forall t \leq t_0,\quad  \hat{Q}^{1,t}_x(\Delta) \leq \frac{c_{\ref{lem:rb_mom1}}}{h(x)}.
	\end{equation}
	Moreover, for all $N\geq N_{\ref{lem:rb_mom1}}$ and all $x\in(0,L)$,
	\begin{equation}
		\label{eq:rb_mom1_int}
		\int_0^{t_0} \hat{Q}^{1,s}_x(\Delta)\, ds \leq c_{\ref{lem:rb_mom1}}\, \frac{\log(N)^5}{N^{1-c}}.
	\end{equation}
\end{lemma}

\begin{proof}
	By definition of the family of measures $\hat Q$, we have
	\begin{equation}
		\label{eq:q1}
		\hat{Q}^{1,t}_x(\Delta)=\int_0^L \frac{1}{h(y)}\bar{q}_t(x,y)dy.
	\end{equation}
	For $t>t_0$, we use Lemma~\ref{lem:hk} and \eqref{eq:normalisation} to see that
	\[
		\hat{Q}^{1,t}_x(\Delta)\leq (1+\vep)\int_0^L \tilde h(y)dy = 1+\vep.
	\]
	For $t\leq t_0$, we use
	\eqref{eq:rel_qp} and \eqref{eq:ub_growth} to obtain
	\[
		\hat{Q}^{1,t}_x(\Delta)=\int_0^L \frac{1}{h(y)}\bar{q}_t(x,y)dy=
		\frac{1}{h(x)}\int_{0}^{L} \bar{p}_s(x,y)dy\leq \frac{C}{h(x)}.
	\]
	Finally, the third point follows from Fubini's
	theorem, \eqref{eq:green_def} and \eqref{eq:ub_growth},
	\begin{align*}
		\int_0^{t_0}\hat{Q}^{1,s}_x(\Delta)ds & =\frac{1}{h(x)}\int_0^L\left(\int_0^{t_0}
		\bar{p}_{s}(x,y) ds\right)dy\leq  \frac{C}{N^{1-c}}\frac{1}{h(x)}e^{-\beta x}
		\int_0^L e^{\beta y}G(x,y)dy.
	\end{align*}
	Using the explicit formula for $G$ from Lemma~\ref{lem:green},
	the RHS of the above equation can be rewritten as
	\begin{align*}
		\int_0^L e^{\beta y}G(x,y)dy & =\frac{\lcn-x}{\beta\lcn +1}
		\int_0^x e^{\beta y}(\beta y+1)dy+\frac{\beta x+1}{\beta\lcn +1}
		\int_x^\lcn e^{\beta y} (\lcn-y)dy.
	\end{align*}
	\cref{cor:est_v1} then yields
	\[
		\int_0^L e^{\beta y}G(x,y)dy \leq 2L^2e^{\beta L}(L-x)\leq c\gamma^{-4}
		e^{\beta L}v_1(x).
	\]
	Finally, by the definition of $h$ and \eqref{eq:equiv_L}, we have that
	for all $N$ large enough,
	\begin{equation*}
		e^{-\beta x}\int_0^L e^{\beta y}G(x,y)dy\leq 2\gamma^{-4} e^{\beta (L-x)}
		v_1(x)
		\leq C\gamma^{-5}h(x).
	\end{equation*}
	This concludes the proof of the lemma.
\end{proof}

\begin{lemma}[Uniform bounds]
	\label{lem:rb} Let $T>0$ and $k\geq 2$.
	There exist a constant $c_{\ref{lem:rb}}>0$
	and an index $N_{\ref{lem:rb}}\in \N$
	such that, for all $t\in [0,T]$,
	\begin{equation}
		\sup_{N\geq N_{\ref{lem:rb}}}\hat{Q}_x^{k,t,N}\left(\Delta\right)<c_{\ref{lem:rb}}
	\end{equation}
\end{lemma}

\begin{proof}
	We prove this result by induction.
	For $k=2$, we know from Proposition~\ref{prop:rec_spine} that
	\[
		\hat{Q}_x^{2,t}(\Delta)= \frac{1}{2N^{c}}\int_0^t \int_0^L h(y)
		\bar q_s(x,y)\hat{Q}^{1,t-s}_y(\Delta)^2\ dy \ ds.
	\]
	For $t>2t_0$, we split the time integral into three parts: $I_1$,
	where the branching point is within $t_0$ of the root ($s\leq t_0$); $I_2$, where it is within $t_0$ of the leaves ($s\geq t-t_0$); and $I_3$, where it is at least $t_0$ away from both the root and the leaves ($s\in (t_0,t-t_0)$).

	Intuitively, in $I_1$, the two leaves are at equilibrium,
	whereas the branching point is not.
	In this case, we first apply \eqref{eq:rb_mom1_eq}
	to see that, for $N$ large enough,
	\begin{equation*}
		I_1
		\leq \frac{(c_{\ref{lem:rb_mom1}})^2}{2 N^{c} } \int_0^{t_0}\int_0^L h(y)\bar{q}_s(x,y)dy \ ds.
	\end{equation*}
	We then deduce from \eqref{eq:Green_q} that
	\begin{equation*}
		I_1
		\leq \frac{(c_{\ref{lem:rb_mom1}})^2c_{\ref{cor:var}}}{2}\frac{\log(N)^3}{N^{1-c}}.
	\end{equation*}
	In $I_2$, the branching point is close to equilibrium but not the
	leaves. We use Lemma~\ref{lem:hk}, \eqref{eq:rb_mom1_neq}
	and \eqref{eq:normalisation}
	to see that, for $N$ large enough,
	\begin{align*}
		I_2 \leq 2N^{-c}\int_{t-t_0}^t \int_{0}^{L}h(y)^2 \tilde{h}(y)
		\hat{Q}^{1, t-s}_y(\Delta)^2 dy \ ds \leq 4 \frac{t_0}{N^c}\int_{0}^L
		\tilde h(y)dy \leq  4 \frac{t_0}{N^c}. \label{eq:I2}
	\end{align*}
	In $I_3$, both the branching point and the leaves are close to equilibrium.
	Hence, it follows from~\eqref{eq:rb_mom1_eq},  Lemma~\ref{lem:hk} and
	Proposition~\ref{prop:concentration} that, for $N$ large enough,
	\[
		I_3\leq (c_{\ref{lem:rb_mom1}})^2N^{-c}\int_{t_0}^{t-t_0}\int_0^L h(y)^2
		\tilde{h}(y)dy  \ ds
		\leq   2(c_{\ref{lem:rb_mom1}})^2t .
	\]

	For $t\leq2t_0$, we need to deal with the case where neither the branching
	point nor the leaves are at equilibrium, that is, for $s\in[t-t_0,t_0]$.
	We denote this integral by $I_4$.
	In this case, we use~\eqref{eq:rb_mom1_int} and \eqref{eq:q1} to see that
	\[
		I_4\leq \frac{2}{N^c}\int_{t-t_0}^{t_0} \int_0^L \frac{1}{h(y)}\bar{q}_s(x,y)dy \ ds
		\leq \frac{2}{N^c}\int_{0}^{t_0}\hat{Q}^{1,s}_x(\Delta)ds\leq 2c_{\ref{lem:rb_mom1}} \frac{\log(N)^5}{N}.
	\]
	The remaining parts of the time integral are then dealt with as in the case $t>2t_0$.
	This concludes the proof for $k=2$.

	For $k\geq 3$, we use the fact that $\hat{Q}_x^{k,t}$ satisfies the recursive relation
	\begin{equation}
		\label{eq:rec_hatQ}
		\hat{Q}^{k,t}_x(\Delta )=\frac{1}{2N^c}\int_0^t \int_0^L h(y)\bar{q}_{t-s}(x,y)
		\sum_{n=1}^{k-1}\hat Q^{n,s}_y (\Delta )\hat Q^{k-n,s}_y (\Delta )dyds.
	\end{equation}
	This formula is obtained from Proposition~\ref{prop:rec_spine} by
	and summing over all the
	possible sizes for the left and the right subtree in the $k$-spine tree.
	By the induction hypothesis, we then get that
	\begin{multline*}
		\hat{Q}^{k,t}_x(\Delta )\leq
		\frac{1}{N^c} \left(\left[\sum_{n=2}^{k-2}c_{\ref{lem:rb}}(n,T)c_{\ref{lem:rb}}(k-n,T)\right]\int_0^t \int_0^L h(y)\bar{q}_s(x,y)dy \ ds \right.\\
		\left.	+	2c_{\ref{lem:rb_mom1}}c_{\ref{lem:rb}}(k-1,T)\int_0^t\int_0^Lh(y)\bar{q}_s(x,y) \hat{Q}_y^{1,s}dy\ ds.
		\right)
	\end{multline*}
	The first term can be bounded by a constant using Lemma~\ref{lem:short_time_variance}
	and Proposition~\ref{prop:concentration} together with Lemma~\ref{lem:hk}. The second
	term is handled as in the case $k=2$, using Lemma~\ref{lem:hk}, Lemma~\ref{lem:rb_mom1}
	and Proposition~\ref{prop:concentration}.
\end{proof}

\subsection{Proof of Proposition~\ref{th:k-spine-cv}}
\label{sec:cvmom}
In this section, we formally justify the approximation \eqref{eq:mixing}. We first
prove that the contribution of k-spine trees comprising accumulations of
branching points is small.
\begin{lemma}
	\label{lem:accumulation}
	Let $t>0$ and $k\in \N$. Let $\mathcal{A}$ be as in \eqref{eq:set_A_def}.
	We have
	\[
		\hat{Q}_x^{k,t}(\Delta \mathbf{1}_{\mathcal{A}^c})\to 0,\quad \text{as} \quad N\to\infty.
	\]
\end{lemma}
\begin{proof}
	We proceed by induction on $k$.
	For $k=2$, the $2$-spine tree has a single branching point, and the event $\mathcal{A}$ corresponds to this branching point being within a distance $t_0$ of either the root or the leaves. Thus,
	\[
		\hat{Q}_x^{2,t}\left(  \Delta
		\mathbf{1}_{\mathcal{A}^c}\right) = I_1 + I_2,
	\]
	where $I_1$ and $I_2$ are as defined in the proof of Lemma~\ref{lem:rb}.
	This proves the case $k=2$.

	Now, assume the result holds up to rank $ k-1$. For $k \geq 3$, we use the
	recursive structure of the $k$-spine tree.
	Recall the definition of $\tau$ from \eqref{eq:def_tau} and define the event
	\[\mathcal{B}=\{\tau>t-t_0\},\]
	which  corresponds to the event in which the deepest branching
	point of the $k$-spine tree lies within time $t_0$ of the root.
	Note that, by definition of $\mathcal{A}$, we have $\mathcal{B}\subset \mathcal{A}^c$.
	As a consequence,
	\[
		\hat{Q}_x^{k,t}(\Delta \mathbf{1}_{\mathcal{A}^c})\leq \hat{Q}_x^{k,t}(\Delta\mathbf{1}_\mathcal{B})
		+\hat{Q}_x^{k,t}( \Delta\mathbf{1}_{\mathcal{B}^c\cap \mathcal{A}^c}).
	\]
	Moreover, on $\mathcal{B}^c\cap \mathcal{A}^c$, there is an accumulation of branching
	points in at least one the two subtrees attached to the deepest branching point.
	Thus,
	\[
		\hat{Q}_x^{k,t}(\Delta \mathbf{1}_\mathcal{A})\leq \hat{Q}_x^{k,t}(\Delta\mathbf{1}_\mathcal{B})
		+\hat{Q}_x^{k,t}(\Delta \mathbf{1}_{\mathcal{B}^c}\mathbf{1}_{\mathcal{A}_0})
		+\hat{Q}_x^{k,t}(\Delta \mathbf{1}_{\mathcal{B}^c}\mathbf{1}_{\mathcal{A}_1}),
	\]
	where $\mathcal{A}_0$ and $\mathcal{A}_1$ denote the accumulation events in
	the left and right subtrees, respectively.
	Moreover, one can check (as in \eqref{eq:rec_hatQ}) that
	\[
		\hat{Q}^{k,t}_x(\Delta \mathbf{1}_\mathcal{B})=\frac{1}{2N^c}\int_{0}^{t_0}
		\int_0^L h(y)\bar{q}_{s}(x,y) \left[\sum_{j=1}^{k-1}
			\hat Q^{j,t-s}_y (\Delta)\hat Q^{k-j,t-s}_y (\Delta)\right]dy \ ds.
	\]
	In the above sum, the terms with indices $j\in\{2,...,k-2\}$ can be bounded
	using Lemma~\ref{lem:rb}, while the convergence to of the corresponding integral
	follows from Lemma~\ref{lem:short_time_variance}. Hence, all these terms
	converge to $0$. For $j \in \{1,k-1\}$, we need
	to prove that
	\[
		\frac{1}{2N^c}\int_{0}^{t_0}
		\int_0^L h(y)\bar{q}_{s}(x,y)
		\hat Q^{1,t-s}_y (\Delta) dy \ ds\xrightarrow{N\to\infty } 0.
	\]
	The proof of this fact is similar to that of Lemma~\ref{lem:rb}
	and is left to the reader.

	Next, we have
	\[
		\hat{Q}^{k,t}_x(\Delta \mathbf{1}_{\mathcal{B}^c}\mathbf{1}_{\mathcal{A}_0})
		=\frac{1}{2N^c}\int_{t_0}^{t}
		\int_0^L h(y)\bar{q}_{s}(x,y) \left[\sum_{j=1}^{k-1}
			\hat Q^{j,t-s}_y (\Delta\mathbf{1}_\mathcal{A})\hat Q^{k-j,t-s}_y
			(\Delta)\right]dy \ ds.
	\]
	In this case, the spine process is close to equilibrium at the deepest
	branching point of the $k$-spine tree.
	We then see from Lemma~\ref{lem:hk}
	\[
		\hat{Q}^{k,t}_x(\Delta \mathbf{1}_{\mathcal{B}^c}\mathbf{1}_{\mathcal{A}_0})
		\leq 2\int_0^L h(y)^2\tilde{h}(y)\int_0^{t-t_0}\left[\sum_{j=1}^{k-1}
			\hat Q^{j,s}_y (\Delta\mathbf{1}_\mathcal{A})\hat Q^{k-j,s}_y (\Delta)\right]dsdy.
	\]
	Again, we distinguish the case $j\in \{1,k-1\}$ and $j\notin \{1,k-1\}$.
	In the latter case, we use Proposition~\ref{prop:concentration}, Lemma~\eqref{lem:rb}
	and the induction hypothesis together with the dominated convergence theorem
	to prove that this bounds converges to $0$. For $j=1$,
	by convention $\hat{Q}_{x}^{1,s}(\Delta \mathbf{1}_{\mathcal{A}})=0$.
	For $j=k-1$,  we use Lemma~\ref{lem:rb_mom1} (iii), Lemma~\ref{lem:rb} and
	Proposition~\ref{prop:concentration} for $s<t_0$, and Proposition~\ref{prop:concentration},
	Lemma~\ref{lem:rb_mom1} (i) and
	Lemma~\ref{lem:rb} together with the dominated convergence theorem for $s\geq t_0$.
\end{proof}

\begin{proof}[Proof of Proposition~\ref{th:k-spine-cv}]
	Write
	\begin{equation*}
		\hat{Q}^{k,t}_x\left(  \Delta
		\phi( U_{\sigma(i),\sigma(j)})\right)=
		\hat{Q}^{k,t}_x\left(  \Delta
		\phi(( U_{\sigma(i),\sigma(j)})) \mathbf{1}_{\mathcal{A}}\right)
		+ \hat{Q}^{k,t}\left(  \Delta
		\phi(( U_{\sigma(i),\sigma(j)})) \mathbf{1}_{\mathcal{A}^c}\right)
	\end{equation*}
	We deal with the terms separately. First, we see from Lemma~\ref{lem:hk} that
	\begin{equation*}
		\hat{Q}^{k,t}_x\left(  \Delta
		\phi(( U_{\sigma(i),\sigma(j)})) \mathbf{1}_{\mathcal{A}}\right)
		=(1+O(\vep))^{2k-1}\left(\frac{t}{N^c}\right)^{k-1}
		\left(\int_0^Lh(y)^2\tilde h(y)dy\right)^{k-1}\left(\int_0^L\tilde{h}(y)dy\right)^k
		{\mathbb E}
		\left[ \phi ( U_{\sigma(i),\sigma(j)})\mathbf{1}_{\mathcal{A}} \right].
	\end{equation*}
	Recalling that $\phi$ is bounded and using \eqref{eq:normalisation}
	and Proposition~\ref{prop:concentration}, we then get that, for $N$ large enough,
	\[
		\left|\hat{Q}^{k,t}_x\left(  \Delta
		\phi(( U_{\sigma(i),\sigma(j)})) \mathbf{1}_{\mathcal{A}}\right)
		-\left(\frac{\sigma^2t}{2}\right)^{k-1}{\mathbb E}
		\left[ \phi ( U_{\sigma(i),\sigma(j)})\mathbf{1}_{\mathcal{A}} \right]\right|
		=O(\vep).
	\]
	On the other hand, we have
	\[
		{\mathbb E}
		\left( \phi ( U_{\sigma(i),\sigma(j)})\mathbf{1}_{\mathcal{A}}\right)=
		\E\left( \phi ( U_{\sigma(i),\sigma(j)})\right)-\E\left( \phi ( U_{\sigma(i),\sigma(j)})\mathbf{1}_{\mathcal{A}^c}\right),
	\]
	and
	\[
		\E\left( \phi ( U_{\sigma(i),\sigma(j)})\mathbf{1}_{\mathcal{A}^c}\right)\leq C \P(\mathcal{A}^c)
		\xrightarrow{N\to\infty} 0.
	\]
	We then use that $\phi$ is bounded and apply Lemma~\ref{lem:accumulation} to
	get that
	\[
		\hat{Q}^{k,t}_x\left(  \Delta
		\phi(( U_{\sigma(i),\sigma(j)})) \mathbf{1}_{\mathcal{A}^c}\right)\xrightarrow{N\to \infty} 0,
	\]
	which concludes the proof of the result.
\end{proof}

\section{Convergence of genealogies}\label{sec:cv:genealogies}
In this section, we prove Theorem~\ref{th:genealogy}, assuming that the Kolmogorov estimate stated in Theorem~\ref{th:Kolmogorov} is valid. The proof of Theorem~\ref{th:Kolmogorov} is postponed to the following section.

\begin{proof}[Proof of Theorem~\ref{th:main-theorem}]
	Let $k\in\mathbb{N}$, $t>0$ and $x\in(0,L)$.
	{Let $\phi:[0,\infty)^{\binom{k}{2}}\to \mathbb{R}$ be a bounded measurable function and define
	\[
	\tilde{\Phi}((X,d,\mu))=\int_{(v_i)\in X^k \; \text{distinct} }\phi(d(v_i,v_j)_{1\leq i<j\leq k})
	\prod_{i=1}^{k}\mu(dv_i),
	\]}
	and
	\[
		{\Phi}((X,d,\mu))=\int_{(v_i)\in X^k \; }\phi(d(v_i,v_j)_{1\leq i<j\leq k})
		\prod_{i=1}^{k}\mu(dv_i)
	\]
	It follows from Proposition~\ref{th:many-to-few2} and Proposition~\ref{th:k-spine-cv} that
	\begin{equation*}
		\lim_{N\to\infty}N\mathbb {E}_x\left[\tilde  \Phi(\bar{M}_t) \right] = k! \left( \frac{\sigma^2 t}{2} \right)^k \EE{\phi( U_{\sigma(i),\sigma(j)})},
	\end{equation*}
	where $(U_{i,j})$ is as in \eqref{eq:UPP} and
	$\sigma$ is an independent random permutation of $[k]$.
	Combining this with our Kolmogorov estimate (Theorem~\ref{th:Kolmogorov}), we obtain
	\begin{align*}
		\lim_{N \to \infty} \E_x[\tilde \Phi(\bar{M}_t)\mid Z_{t N^{1-c}}>0] = \lim_{N \to \infty} \frac{N \E_x[\tilde \Phi(\bar{M}_t)]}{N \P_x(Z(t N^{1-c})>0)}
		=k! \left( \frac{\sigma^2 t}{2} \right)^k \EE{\phi( U_{\sigma(i),\sigma(j)})}.\end{align*}
	It now remains to prove that
	\[
		\lim_{N\to\infty}
		\E_x \left[  \tilde \Phi(\bar{M}_t) - \Phi(\bar{M}_t)
			\mid Z_{t N^{1-c}}>0  \right]=0
	\]
	Hence, it is sufficient to show by induction on $k$ that
	\[
		\lim_{N\to\infty}\E_x\left[\int_{\Nc_t^k} \mathbf{1}_{\cup_{1\leq i<j\leq k}\{v_i=v_j\}}
		\prod_{n=1}^k \bar{\mu}
		(dv_n)\big| Z_{tN^{1-c}}>0\right]=0.
	\]
	On the one hand,
	\[
		\E_x\left[\int_{\Nc_t^k} \mathbf{1}_{\cup_{1\leq i<j\leq k}\{v_i=v_j\}}
		\prod_{n=1}^k \bar{\mu}
		(dv_n)\big| Z_{tN^{1-c}}>0\right]\leq \sum_{1\leq i<j\leq k}
		\frac{1}{N}\E_x \left[\prod_{n=1}^{k-1} \bar{\mu}
			(dv_n)\big| Z_{tN^{1-c}}>0\right].
	\]
	On the other hand, by induction and by the first part of the proof,
	\begin{align*}
		\lim_{N\to\infty}\E_x \left[\prod_{n=1}^{k-1} \bar{\mu}
			(dv_n)\big| Z_{tN^{1-c}}>0\right]
		 & =\lim_{N\to\infty}\E_x\left[\int_{\Nc_t^k} \mathbf{1}_{\{(v_i)\ \text{distinct}\}}
		\prod_{n=1}^k \bar{\mu}
		(dv_n)\big| Z_{tN^{1-c}}>0\right]                                                     \\
		 & =(k-1)! \left( \frac{\sigma^2t}{2} \right)^{k-1}.
	\end{align*}
	This concludes the proof of the result.
\end{proof}

\begin{proof}[Proof of Theorem \ref{th:Yaglom} and \ref{th:genealogy}]
	The proof goes along the same lines as the proof of Corollary 2.3 in~\cite{boenkost2022genealogy}
	by noting that the respective maps, mapping the Brownian CPP to its total size and to
	its genealogy, are continuous w.r.t. the Gromov--weak topology and making use of Theorem~\ref{th:main-theorem}.
\end{proof}

\section{Kolmogorov estimate (Proof of Theorem~\ref{th:Kolmogorov})}
\label{sec:Kolmogorov}

The proof of Theorem~\ref{th:Kolmogorov} is an adaptation of \cite[Proposition 4.1]{powell19},
which treats the case of critical branching diffusions in bounded domains.

Following~\cite{powell19}, we define
\begin{align*}
	f(t,x) = \pp_x\big(Z(t)>0\big), \qquad t\ge 0, \quad x\in[0,\lcn],
\end{align*}
and
\begin{align*}
	a(t) = \int_0^{\lcn} f(t,x)\,\tilde h(x)\,dx, \qquad t\ge 0.
\end{align*}
For clarity, we omit the superscript $N$, keeping in mind that all quantities still depend implicitly on $N$.

The main step is to show that, for $N$ and $t$ sufficiently large,
\begin{equation}\label{approx:f:a}
	f(t,x)\approx a(t)h(x),
\end{equation}
and that $a$ satisfies, at least approximately,
\begin{equation}\label{eq:ODE:a}
	\dot a(t)\approx -\frac{\Sigma^2}{2}a(t)^2.
\end{equation}
Solving this ODE and rescaling time then gives the result.
More precisely, we prove the following estimate.

\begin{proposition}[Kolmogorov estimate for the unscaled BBM]\label{prop:kolmogorov}
	For all $\vep>0$, there exists $N^*\in\N$ such that,
	for all $N>N^*$, all $t\ge \log(N)^7$ and all $x\in(0,L)$,
	\[
		(1-\vep)\frac{h(x)}{t}\frac{2}{\Sigma^2}
		\le \P_x\big(Z(t)>0\big)
		\le (1+\vep)\frac{h(x)}{t}\frac{2}{\Sigma^2}.
	\]
\end{proposition}

The proof of this result is an adaptation of the following result obtained by Powell
in~\cite{powell19}.

\begin{proposition}[\cite{powell19}, Proposition 4.1]
	\label{prop:powell}
	For all $\vep>0$ and all $N\ge N_0$, there exists $T(\vep,N)$ such that,
	for all $t>T(\vep,N)$ and all $x\in(0,L)$,
	\[
		(1-\vep)a(t)h(x)\le f(t,x)\le (1+\vep)a(t)h(x).
	\]
\end{proposition}

The proof of Proposition~\ref{prop:powell} relies on a spine construction that differs from
the one introduced in Section~\ref{sec:spine}. For clarity, we do not give the
full alternative construction here. Instead, we start from the bounds obtained through
this spinal decomposition in~\cite{powell19}, and show that they can be extended to the
increasing sequence of domains considered here.
More precisely, we prove that for $N$ sufficiently large, the time $T(\vep,N)$
introduced in Proposition~\ref{prop:powell} can be chosen much smaller than $N^{1-c}$,
namely of order $\log(N)^7$.

Essentially, this alternative spinal decomposition relies on the martingale
change of measure
\begin{align}\label{chg:measure}
	\frac{d \tilde{\P}^{t}_x}{d \P_x} = \frac{1}{h(x_v(t))}Y_t,
\end{align}
where $Y_t$ refers to the additive martingale defined in \eqref{eq:def_martingale}.
Under this new measure, the spine behaves as a distinguished
particle which: (i) evolves according to the transition kernel $q(x,y)$; (ii)
branches at rate $1$; and (iii) at each branching event of a spine particle, one
of the two offspring is chosen to continue the spine and then repeats the behaviour
of its parent. The non-spine particles evolve independently and follow the same
dynamics as in the original branching Brownian motion $\mathbf{X}^N$. We refer
to~\cite[Section 2.3.4]{powell19} and to the references therein
for a precise definition of this change of measure.

\subsection{The function $a$}
We first derive rough bounds on the function $a$, which will play a central role
in the refined estimates of $f$.

\begin{lemma}\label{lem:ODE}
	For all $N\geq N_0$ and all $t>0$,
	\begin{align}
		\label{eq:derivative_a}
		\dot{a}(t) & = - \frac{1}{2}\int_0^{\lcn}
		f(t,x)^2\tilde h(x)dx.
	\end{align}
	As a consequence, we have, for all $N\geq N_0$ and all $t>0$,
	\[
		a(t)\leq \frac{2}{t}.
	\]
\end{lemma}

\begin{proof}
	By definition of $a$, we see that
	\begin{equation*}
		\dot{a}(t)=\int_0^{\lcn} \partial_t f(t,x)\tilde h(x)dx.
	\end{equation*}
	It is a standard fact that $f$ satisfies the FKPP equation
	\begin{equation*}
		\begin{cases}
			\partial_t f(t,x) = \frac{1}{2} \partial_{xx} f(t,x) +
			\beta \partial_x f(t,x) + \frac{1}{2} \left( f(t,x) -f(t,x)^2\right), \\
			f(t,\lcn) =0 , \quad 	\partial_x f(t,x)|_{x=0}=0.
		\end{cases}
	\end{equation*}
	An integration by part then yields
	\begin{align*}
		\dot{a}(t) &
		=\left[ \frac{1}{2} \partial_x f(t,x) \tilde{h}(x) \right]_0^\lcn
		- \frac{1}{2}\int_0^\lcn  \partial_x f(t,x) \tilde{h}'(x) dx
		+ \left[ \beta f(t,x) \tilde{h}(x) \right]_0^\lcn                              \\
		           & \quad- \beta\int_0^\lcn  f(t,x) \tilde{h}'(x) dx
		+ \frac{1}{2}\int_0^\lcn f(t,x)\tilde{h}(x)dx - \frac{1}{2}\int_0^\lcn
		f(t,x)^2\tilde{h} (x)dx                                                        \\
		           & = \left[ -\frac{1}{2} f(t,x) \tilde{h}'(x) \right]_0^\lcn - \beta
		f(t,0) \tilde{h} '(0) + \int_0^\lcn  f(t,x) \left( \frac{1}{2}\tilde{h}''(x)
		-\beta \tilde{h}'(x)-\frac{1}{2}\tilde{h}(x)\right) dx                         \\
		           & \quad
		- \frac{1}{2}\int_0^\lcn f(t,x)^2\tilde{h}(x)dx                                \\
		           & =- \frac{1}{2}\int_0^\lcn f(t,x)^2\tilde{h} (x)dx,
	\end{align*}
	where we use that $\frac{1}{2}\tilde{h}''-\beta \tilde{h}'
		-\frac{1}{2}\tilde{h}$ on $(0,\lcn)$, that  $f(t,\lcn)=0$
	and that $\frac{1}{2}\tilde{h}'(0)-\beta \tilde{h}(0)=0$ to get the last line.

	The second part of the lemma follows from Jensen's inequality.
	Indeed, we have
	\begin{equation*}
		\dot{a}(t)
		= - \frac{1}{2}\int_0^{\lcn}f(t,x)^2\tilde h(x)dx
		\leq - \frac{1}{2}\left(\int_0^{\lcn}f(t,x)\tilde h(x)dx\right)^2
		=- \frac{1}{2}a(t)^2.
	\end{equation*}
	Integrating this inequality then yields that
	\begin{equation*}
		\forall t>0, \quad a(t)\leq\frac{1}{\frac{1}{a(0)}+\frac{1}{2}t}
		\leq \frac{2}{t}.
	\end{equation*}
\end{proof}

\begin{lemma}
	\label{lem:rb_mart}
	Recall the definition of the additive martingale $Y$ from \eqref{eq:def_martingale}.
	There exists a constant $c_{\ref{lem:rb_mart}}>0$ such that, for $N$ large enough,
	for all $t\geq\log(N)^3$
	and all $x\in (0,L)$,
	\[
		\E_x\left[{Y_t}^{2}\right]\leq c_{\ref{lem:rb_mart}}{t}h(x)N^c.
	\]
\end{lemma}
\begin{proof}
	First, we note that
	\[ \E_x\left[{Y_t}^2\right]=\E_x\left[\sum_{v\in \Nc_t}
			h(x_v(t))^2\right]+\E_x\left[\sum_{v\neq w\in \mathcal{N}_t}h(x_{v}(t))h(x_{w}(t))
			\right].
	\]
	By the many-to-few lemma (Theorem~\ref{many-to-few00}), we get that
	\[
		\E_x\left[\sum_{v\neq w\in \mathcal{N}_t}h(x_{v}(t))h(x_{w}(t))
			\right]=
		h(x)tQ_{x}^{2,t}\left( h(\zeta_v)\right),
	\]
	where $v$ refers to the unique branching point of the $2$-spine tree.
	By definition of the spine measure,
	\[
		Q_{x}^{2,t}\left( h(\zeta_v)\right)={\frac{1}{t}}\int_0^t\left(\int_0^L q_s(x,y)h(y)dy \right)ds.
	\]
	Let $T_0=c_{\ref{lem:hk}}(1) \lcn^2$.
	It follows from Lemma~\ref{lem:short_time_variance}
	that, for all $N$ large enough and all $x\in(0,L)$,
	\begin{equation*}
		\int_0^{T_0}\left(\int_0^L q_s(x,y)h(y)dy \right)ds\leq
		c_{\ref{lem:short_time_variance}}\log(N)^3N^c.
	\end{equation*}
	The remaining part of the integral is bounded using Lemma~\ref{lem:hk}:
	we obtain that, for $N$ large enough,
	\begin{equation*}
		\int_{T_0}^t\left(\int_0^L q_s(x,y)h(y)dy \right)ds\leq
		2(t-T_0)\left(\int_0^L h(y)^2\tilde{h}(y)dy \right)ds\leq 4\sigma^2tN^c,
	\end{equation*}
	where we used Proposition~\ref{prop:concentration} to obtain the last
	inequality.

	Next, applying the many-to-one once again, we obtain
	\[
		\E_x\left[\sum_{v\in \Nc_t}
			h(x_v(t))^2\right]=
		h(x)Q_{x}^{1,t}\left( h(\zeta_v)\right),
	\]
	where $v$ refers to the unique leaf of the $1$-spine tree.
	By definition of the spine measure $Q_x^{1,t}$,
	\[
		Q_x^{1,t}\left( h(\zeta_v)\right)
		=\int_0^L q_t(x,y)h(y)dy.
	\]
	It then follows from \cref{lem:hk}
	and Proposition~\ref{prop:concentration} that
	\[
		Q_x^{1,t}\left( h(\zeta_v)\right)\leq 2\int_0^L h(y)^2\tilde{h}(y)dy\leq 4\sigma^2N^c.
	\]
	Putting all of these estimates together yields the result.
\end{proof}

\begin{lemma}[Lower bound on $a$]
	\label{lem:A priori bound}
	There exists a constant $c_{\ref{lem:A priori bound}}>0$ such that, for $N$ large enough and for all $t\geq \log(N)^3$,
	\begin{align}
		\label{rough:est}
		f(t,x)\geq \frac{c_{\ref{lem:A priori bound}}}{ tN^c }h(x).
	\end{align}
	Moreover, there exists a constant $c_{\ref{lem:A priori bound}}'>0$
	such that,
	for $N$ large enough and all $t\geq \log(N)^3$
	\begin{equation*}
		\dot{a}(t)\leq -\frac{c_{\ref{lem:A priori bound}}'}{t^2}\frac{1}{N^c}
		\quad \text{and} \quad a(t)\geq\frac{c_{\ref{lem:A priori bound}}'}{t} \frac{1}{N^c}.
	\end{equation*}
\end{lemma}
\begin{proof}
	The proof is adapted from \cite[Lemmas 7.2]{harris21} and relies on
	a change of measure combined with Jensen's inequality. This change of
	measure is precisely the one used in the alternative spine decomposition
	introduced by Powell in~\cite{powell19}.

	Let $\tilde{\P}_x^{t}$ be the probability measure absolutely continuous
	w.r.t.~to $\mathbb{P}_x$ whose  Radon-Nikodym derivative is  given by
	\eqref{chg:measure}.
	This change of measure combined with Jensen's inequality yields
	\begin{align*}
		\frac{\P_x(Z(t)>0)}{h(x)} =
		\tilde{\P}^{t}_x \left[\frac{1}{\sum_{v \in \Nc_{t}} h(x_v(t))}\right]
		\geq \frac{1}{\tilde{\P}^{t}_x \left[ \sum_{v \in \Nc_{t}} h(x_v(t))\right]}
	\end{align*}
	Yet, we see from \eqref{chg:measure} that
	\begin{align}
		\tilde{\P}^{t}_x \left[ \sum_{v \in \Nc_{t}} h(x_v(t))\right]
		=\frac{{\E}_x \left[ \left( \sum_{v \in \Nc_{t}} h(x_v(t))\right)^2\right]}{h(x)}.
	\end{align}
	The first part of the result then follows from \cref{lem:rb_mart}.

	The second part of the result stems from Lemma \ref{lem:ODE}, Lemma \ref{lem:A priori bound} and
	Proposition~\ref{prop:concentration} that, for $N$ large enough and $t\geq \log(N)^3$,
	\begin{equation}
		\quad \dot{a}(t)
		\leq-\left(\frac{c_{\ref{lem:A priori bound}}}{tN^{c}}\right)^2\int_0^L
		(h(x))^2\tilde{h}(x)dx\leq- (c_{\ref{lem:A priori bound}})^2\frac{2\sigma^2 }{t^2}\frac{1}{N^c}.
	\end{equation}
	It then follows from an integration that for $\log(N)^3\leq t<s$,
	\begin{equation*}
		a(s)-a(t)\leq \frac{2\sigma^2(c_{\ref{lem:A priori bound}})^2}{N^c} \left(\frac{1}{s}-\frac{1}{t}\right).
	\end{equation*}
	Letting $s\to\infty$ and using the second part of Lemma~\ref{lem:ODE}
	give the result.
\end{proof}

\section{Proofs of Proposition~\ref{prop:kolmogorov} and Theorem~\ref{th:Kolmogorov}}

\newcommand{\ts}{{t_*}}

\begin{lemma}\label{lem:ub_f}
	Let $\ts=\log(N)^3$.
	For all $N$ large enough, all $t\geq 2\ts$ and all $x\in (0,L)$, we have
	\begin{equation}
		\label{eq:ub_f}
		f(t,x)= \left(1+O\left(N^{-1}\right)\right)a(t-\ts)h(x).
	\end{equation}
\end{lemma}
\begin{proof}
	We know from the branching property that, for all $N\geq N_0$, $t>\ts$ and
	$x\in(0,L)$,
	\begin{align}\label{eq:branching}
		f(t,x)= \P_x \left(\bigcup_{v \in \Nc_{\ts}}
		\{ Z^{(v)}({t-\ts})>0\}\right),
	\end{align}
	where $Z^{(v)}(t-\ts)$ denotes the number of descendants at time $t$
	of the particle  $v$ living at time $\ts$.
	A union bound combined with the many-to-one formula (Lemma~\ref{lem:many-to-one0})
	entails that
	\begin{align*}
		f(t,x)
		\leq \E_x \left[\sum_{v \in \Nc_{\ts}} \P_{x_v}(Z^{(v)}(t-\ts)>0)\right]
		= \int_0^L p_{\ts}(x,y)  f(t-\ts,y) dy.
	\end{align*}
	Then, note that for $N$ large enough and $t\geq 2\ts$, we have
	$\tfrac{\pi^2}{L^2}t^2\geq \log(N)$.
	The result thus follows from the first part of~\cref{lem:hk}.
\end{proof}

Before establishing the lower bound on $f$, we first show how \cref{lem:ub_f} can
be used to obtain a sharper upper bound on $a$.
\begin{lemma}
	\label{lem:uba}
	There exists a constant $c_{\ref{lem:uba}}>0$ such that for sufficiently large $N$ and $t\geq {\log(N)^6}$,
	\[
		a(t)\leq \frac{c_{\ref{lem:uba}}}{N^c\log(N)^2}.
	\]
\end{lemma}

\begin{proof} Let $1\ll A\ll L$ and write $a$ as
	\begin{equation}\label{eq:decomp_a}
		a(t)=\int_0^{A}f(t,x)\tilde{h}(x)dx
		+\int_{A}^\lcn f(t,x)\tilde{h}(x)dx.
	\end{equation}
	First, we see from Remark~\ref{rem:s_fittest} that, for sufficiently large $N$,
	we have
	\begin{equation}\label{eq:decomp_a1}
		\int_0^{A}f(t,x)\tilde{h}(x)dx\leq
		\int_0^{A}\tilde{h}(x)dx\leq \frac{2Ae^{\beta A}}{c^6N^c\log(N)^6}.
	\end{equation}
	On the other hand, Lemma~\ref{lem:ub_f} shows that, for sufficiently large $N$
	and all $t\geq 2\ts$,
	\begin{equation*}
		\int_A^Lf(t,x)\tilde{h}(x)dx=\left(1+O\left(N^{-1}\right)\right)a(t-\ts)
		\int_A^L\tilde{h}(x)h(x),
	\end{equation*}
	where $\ts$ is as in the statement of Lemma~\ref{lem:ub_f}.
	It then follows from~Lemma~\ref{lem:A priori bound}
	that, for sufficiently large $N$ and $t\geq \log(N)^6$,
	\[
		a(t-\ts)\leq a(t)+\frac{c_{\ref{lem:A priori bound}}}{(t-\tilde t)N^c}
		\leq a(t)+\frac{2c_{\ref{lem:A priori bound}}}{\log(N)^6N^c}.
	\]
	Moreover, an explicit calculation shows that for sufficiently large $N$,
	\begin{equation*}
		\int_A^L\tilde{h}(x)h(x)dx\leq 1-\left(\frac{A}{L}\right)^3,
	\end{equation*}
	(see \cref{rem_density} for more details).
	Since $\left(\tfrac{A}{L}\right)^3\gg N^{-1}$, the previous estimates imply that
	for sufficiently large $N$
	and all $t\geq \log(N)^6$,
	\begin{equation*}
		\int_A^Lf(t,x)\tilde{h}(x)dx\leq
		\left(1-\frac{A^3}{2L^3}\right)a(t)+\frac{2c_{\ref{lem:A priori bound}}}{\log(N)^6N^c}.
	\end{equation*}
	Combining this with \eqref{eq:decomp_a} and
	\eqref{eq:decomp_a1} shows that, for $N$ large enough and $t\geq \log(N)^6$,
	\begin{equation*}
		a(t)\leq
		C\frac{e^{\beta A}}{A^2}\frac{1}{N^c\log(N)^3}
	\end{equation*}
	We conclude the proof of the lemma by choosing $A$ such that
	$1\ll A\ll \log\log(N)$.
\end{proof}

\begin{proof}[Proof of Proposition~\ref{prop:kolmogorov}]
	\textit{Step 1: Upper bound.} The result is a consequence of Lemma~\ref{lem:ub_f}
	and Step 3 below.

	\textit{Step 2: Lower bound.} Using the alternative spinal decomposition~\eqref{chg:measure},
	Powell~\cite{powell19} shows that, for all $N\geq N_0$, all $t>\ts$ and $x\in(0,L)$,
	\begin{equation}
		\label{eq:lb1}
		\frac{f(t,x)}{h(x)}
		\geq \left(\int_0^L \frac{f(t-\ts,y)}{h(y)}q_\ts(x,y)dy\right)\times
		\left( 1-\mathfrak{Q}(x,\ts)\sup_{x\in (0,L)}\P_x(Z(t-\ts)>0)\right),
	\end{equation}
	where $\mathfrak{Q}(x,\ts)$ refers to the number of offspring produced by the spine particle started from $x$ before time $\ts$. Note that, by construction of the spine particle (see the discussion
	below Proposition~\ref{prop:kolmogorov}),
	\[
		\mathfrak{Q}(x,\ts)\leq C\ts,
	\]
	where $C$ is a constant that does not depend on $N$ nor on $x$ (the spine particles
	duplicate at constant rate $1$). Next,
	we see from a {standard coupling argument} combined with Lemma~\ref{lem:ub_f}
	that, for $N$ large enough and $t\geq 2\ts$,
	\[
		\sup_{x\in (0,L)}\P_x(Z(t-\ts)>0)\leq \P_0(Z(t-\ts)>0) \leq 2a(t-\ts)h(0).
	\]
	Moreover, by definition of $h$ (see \eqref{def:h}), we see that
	\[
		h(0)\leq C\frac{N^c}{\log(N)^3},
	\]
	for some constant $C$ is a constant that does not depend on $N$ (nor on $x$).
	This along with Lemma~\ref{lem:uba} yields that for all $N$ large enough,
	$t\geq 2\log(N)^6$ and $x\in(0,L)$,
	\[
		\mathfrak{Q}(x,\ts)\sup_{x\in (0,L)}\P_x(Z(t-\ts)>0)
		\leq \vep.
	\]
	We now bound the first factor in \eqref{eq:lb1}. Lemma~\ref{lem:hk} entails that
	for all $N$ large enough, $t\geq 2\log(N)^6$ and $x\in(0,L)$,
	\[
		\int_0^L \frac{f(t-\ts,y)}{h(y)}q_\ts(x,y)dy\leq (1-\vep)a(t-\ts).
	\]
	\textit{Step 3: Conclusion.} Putting all of this together, we have that for sufficiently large $N$, for all
	$t\geq 2\log(N)^6$ and $x\in (0,L)$,
	\begin{equation}
		\label{eq:boundf}
		\left|\frac{f(t,x)}{a(t-\ts)h(x)}-1\right|\leq \vep.
	\end{equation}
	\bigskip
	It now remains to prove that for $N$ large enough and $t\geq 2\log(N)^6$,
	\[
		\left|\frac{a(t)}{a(t-\ts)}-1\right|\leq \vep.
	\]
	Following~\cite{powell19}, we note that, by definition of $a$,
	\[
		\left|\frac{a(t)}{a(t-\ts)}-1\right|=\left|\int_0^L\left(\frac{f(t,x)}{h(x)a(t-\ts)}-1\right)
		h(x)\tilde{h}(x)dx\right|\leq \vep \int h\tilde h \leq \vep,
	\]
	for sufficiently large $N$ and $t\geq 2\log(N)^6$ by \eqref{eq:boundf}.

	As a consequence, we have that, for all $N$ large enough, $t\geq2\log(N)^6$ and $x\in (0,L)$
	\begin{equation}
		\label{eq:fine_estimate_f}
		(1-\vep)a(t)h(x)\leq f(t,x)\leq (1+\vep)a(t)h(x)
	\end{equation}
	so that, by Lemma~\ref{lem:ODE},
	\begin{equation*}
		\frac{1}{(1+\vep)\frac{\Sigma^2}{2}(t_2-t_1)+\frac{1}{a(t_1)}}\leq a(t_2)\leq
		\frac{1}{(1-\vep)\frac{\Sigma^2}{2}(t_2-t_1)+\frac{1}{a(t_1)}}.
	\end{equation*}
	For sufficiently large $N$, $t_2>t_1\geq2\log(N)^6$ and $x\in(0,L)$.
	We conclude the proof by remarking that
	for $t_2>\log(N)^7$ and $t_1=3\log(N)^6$, we have $\frac{t_2}{t_1}\to 0$
	and
	\[
		\lim_{N\to\infty}{\Sigma^2t_2 a(t_1)}=+\infty,
	\]
	by Lemma~\ref{lem:uba} and Proposition~\ref{prop:concentration}.
	The result then stems from \eqref{eq:fine_estimate_f}.
\end{proof}

\begin{proof}[Proof of Theorem~\ref{th:Kolmogorov}]
	The result follows from Proposition~\ref{prop:kolmogorov} by setting
	$t=sN^{1-c}$ for some $s>0$, and recalling from Proposition~\ref{prop:concentration}
	that $\Sigma^2/N^c\to\sigma^2$ as $N\to\infty$.
\end{proof}

\appendix

\section{Appendix: Concentration estimates}
\label{sec:proof_variance}

\begin{proof}[Proof of Proposition~\ref{prop:concentration}]
	Recall from \eqref{def:h} that
	\begin{equation}
		\Sigma(z)^2=\frac{4\gamma e^{\beta L}}{(L+\beta)^2}\int_0^z e^{-\beta x}v_1(x)^3dx=:\frac{4\gamma e^{\beta L}}{(L+\beta)^2} I_z. \label{Sigma:M:formula}
	\end{equation}
	First, we remark that for all $\alpha>0$ and $0\leq x_1\leq x_2\leq L$, we have
	\begin{equation} \label{eq:IPP}
		(\beta^2+\alpha^2)\int_{x_1}^{x_2}e^{\beta x}\sin(\alpha x)dx= \beta\left[e^{\beta {x_2}}\sin(\alpha {x_2})-e^{\beta {x_1}}\sin(\alpha {x_1})\right]-\alpha\left[e^{\beta {x_2}}\cos(\alpha {x_2})-e^{\beta {x_1}}\cos(\alpha {x_1})\right].
	\end{equation}
	This equality can be obtained by consecutive integrations by parts. We will also make use of the following identities:
	\begin{multline}\label{eq:identity}
		\forall x\in \mathbb{R}, \quad \sin^3(x)=\frac{1}{4}\left(3\sin(x)-\sin(3x)\right), \quad \cos^3(x)=\frac{1}{4}\left(3\cos(x)+\cos(3x)\right) \\
		\sin(\gamma L)=\gamma, \quad \cos{\gamma L}=-\beta, \quad \sin(3\gamma L)=3\gamma -4\gamma^3, \quad \cos(3\gamma L)=3\beta-4\beta^3, \quad \gamma^2+\beta^2=1,
	\end{multline}
	where the second and the third inequalities are established in Lemma~\ref{lem:decomp_spect}. Thus,
	\begin{equation*}
		I_z=\frac{3}{4}\underbrace{\int_0^ze^{-\beta x}\sin(\gamma(L-x))}_{\displaystyle =:I_{z,1}}dx-\frac{1}{4}\underbrace{\int_0^ze^{-\beta x}\sin(3\gamma(L-x))dx}_{\displaystyle =:I_{z,3}}.
	\end{equation*}
	It follows from a change a variable, \eqref{eq:IPP} and \eqref{eq:identity} that
	\begin{align*}
		I_{z,1} & =\beta\left(\sin(\gamma L)- e^{-\beta z}\sin(\gamma (L-z))\right)-\gamma \left(\cos(\gamma L)-e^{-\beta z}\cos(\gamma(L-z))\right) \\
		        & = 2\beta \gamma +e^{-\beta z}\left(\gamma \cos (\gamma (L-z))-\beta \sin(\gamma (L-z))\right),
	\end{align*}
	and that
	\begin{align}
		I_{z,3} & \label{eq:I3}
		=(1+8\gamma^2)^{-1}\left[\beta\left(\sin(3\gamma L)-e^{-\beta z}\sin(3\gamma (L-z))\right)-3\gamma\left(\cos(3\gamma L)-e^{-\beta z}\cos(3\gamma(L-z))\right)\right] \nonumber                         \\
		        & =(1+8\gamma^2)^{-1}\left[\beta \sin(3\gamma L)-3\gamma \cos(3\gamma L) +e^{-\beta z} (3\gamma\cos(3\gamma (L-z))-\beta \sin(3\gamma (L-z)))\right] \nonumber                                 \\
		        & =(1+8\gamma^2)^{-1}\underbrace{\left[6\beta \gamma -16\beta \gamma^3+e^{-\beta z} (3\gamma\cos(3\gamma (L-z))-\beta \sin(3\gamma (L-z)))\right]}_{\displaystyle :=\tilde{I}_{z,3}} \nonumber
	\end{align}
	In particular, for $z=L$, we have
	\[
		I_{L,1}=2\beta \gamma + \gamma e^{-\beta L}{=3\beta\gamma +o(\gamma^3)},
		\quad
		\text{and}
		\quad I_{L,3}=(1+8\gamma^2)^{-1}(6\beta\gamma-16\beta\gamma^3+3\gamma e^{-\beta L})
		{=6\beta\gamma-64\beta \gamma^3+o(\gamma^3)},
	\]
	so that, $I_L=\frac{1}{4}\left(64\gamma^3+o(\gamma^3)\right)$,
	which, together with \eqref{Sigma:M:formula}, \eqref{eq:equiv_L} and \eqref{eq:def_beta} yields \eqref{eq:equiv_variance} (with $\sigma^2=64\pi^2/c^6$).
\end{proof}

\begin{proof}[Proof of Proposition~\ref{prop:concentration2}]
	Factorising the previous estimates, we get
	\begin{align*}
		3I_{z,1}-\tilde{I}_{z,3} & =16\beta\gamma^3+e^{-\beta z}\left(3\gamma(\cos(\gamma(L-z)-\cos(3\gamma(L-z))))+\beta(\sin(3\gamma(L-z))-3\beta\sin(\gamma(L-z)))\right) \\
		                         & =16\beta\gamma^3+e^{-\beta z} \left(12\gamma\sin(\gamma(L-z))^2\cos(\gamma(L-z))-4\beta\sin(\gamma(L-z))^3\right)                         \\
		                         & =16\beta\gamma^3-4e^{-\beta z}\sin(\gamma(L-z))^2\left(\beta\sin(\gamma(L-z))-3\gamma\cos(\gamma(L-z))\right),
	\end{align*}
	where we used \eqref{eq:identity} and the identity $\cos(x)-\cos(y)=-2\sin((x-y)/2)\sin((x+y)/2)$ to get the second line.
	Let us now assume that $1\ll z\ll L$. Then, $\gamma \ll \gamma z \ll 1$ and a Taylor expansion then shows that
	\begin{align*}
		3I_{z,1}-\tilde{I}_{z,3}
		                         & ={16\beta\gamma^3-4e^{-\beta z}((\gamma z)^3
		+o((\gamma z)^3))},                                                                    \\
		-8\gamma^2\tilde I_{z,3} & =-48\beta\gamma^3+e^{-\beta z}(o((\gamma z)^3))+o(\gamma^4)
	\end{align*}
	so that
	\[
		3I_{z,1}-I_{z,3}=64\gamma^3 -4e^{-\beta z }((\gamma z)^3+o((\gamma z)^3))+o(\gamma^4).
	\]
	Applying this to $z= A=6\log\log N-\log\log\log(N)$  yields the first part of the proposition.

	We now compute the quantity
	\begin{equation*}
		J_z:=\int_0^z \tilde{h}(x)dx=\frac{1}{\gamma}\int_0^z e^{\beta(x-L)}\sin(\gamma(L-x))dx=\frac{1}{\gamma}\int_{L-z}^Le^{-\beta x}\sin(\gamma x)dx.
	\end{equation*}
	An integration by part shows that, for all $0\leq x_1\leq x_2\leq L$, we have
	\begin{equation*}
		(\beta^2+\alpha^2)\int_{x_1}^{x_2}e^{-\beta x}\sin(\alpha x)dx= -\beta  \left[e^{-\beta {x_2}}\sin(\alpha {x_2})-e^{-\beta {x_1}}\sin(\alpha {x_1})\right]-\alpha\left[e^{-\beta {x_2}}\cos(\alpha {x_2})-e^{-\beta {x_1}}\cos(\alpha {x_1})\right].
	\end{equation*}
	Putting this together with \eqref{eq:identity} shows that
	\begin{equation*}
		J_z=\frac{1}{\gamma}e^{-\beta(L-z)}\left(\beta\sin(\gamma(L-z))+\gamma\cos(\gamma(L-z))\right).
	\end{equation*}
	In particular, we see that $J_L=1$ (as mentioned in \eqref{eq:normalisation}).
	For $1\ll z\ll L$, a Taylor expansion shows that
	\begin{equation*}
		J_z=\frac{1}{\gamma}e^{-\beta(L-z)}(\gamma z+o(\gamma z)).
	\end{equation*}
	Plugging $z=A$ in the above equation shows that $N^cJ_A\to {6/c^6}$ as $N$ goes to $\infty$.
	This completes the proof.
\end{proof}

\begin{rem}\label{rem:s_fittest}
	The above calculations show that for all $1\ll A \ll L$,
	as $N\to\infty$,
	\begin{equation*}
		\frac{1}{N^c}\Sigma(A)^2\to\sigma^2,
		\quad \text{and} \quad
		N\int_0^A\tilde{h}(x)dx \sim \frac{N^{1-c}}{c^6\log(N)^6}  \left(Ae^{\beta A}\right).
	\end{equation*}
\end{rem}

\begin{rem}
	\label{rem_density}
	For all $1\ll A\ll L$
	\[
		\int_{0}^{A}h(x)\tilde{h}(x)dx\sim \frac{2\pi^2}{3}\frac{A^3}{L^3}.
	\]
	Indeed, a direct calculation shows that
	\[
		\int_0^A\sin(\gamma(L-x))^2dx=\frac{A}{2}-\frac{1}{4\gamma}\left(
		\sin(2\gamma L)-\sin(2\gamma(L-A))
		\right)=\frac{A}{2}-\frac{1}{2\gamma}\cos(\gamma (2L-A))\sin(\gamma A).
	\]
	Next, A Taylor expansion yields
	\[
		\cos(\gamma (2L-A))\sin(\gamma A)=\gamma A -\frac{2}{3}(\gamma A)^3
		+o((\gamma A)^3 ).
	\]
	Hence,
	\[
		\int_0^A\sin(\gamma(L-x))^2dx\sim \frac{\gamma^2 A^3}{3}
	\]
	and by Remark~\ref{rem:bij_L},
	\[
		\int_{0}^{A}h(x)\tilde{h}(x)dx\sim \frac{2}{L}\frac{\gamma^2 A^3}{3}\sim\frac{2\pi^2}{3}\frac{A^3}{L^3}.
	\]
\end{rem}
}

\pagebreak

\bibliographystyle{plain}
\bibliography{mullers_ratchet.bib}
\end{document}